\newtheorem{thm}{Theorem}[section]
\newtheorem*{thm*}{Theorem}
\newtheorem{cor}[thm]{Corollary}
\newtheorem*{cor*}{Corollary}
\newtheorem{lem}[thm]{Lemma}
\newtheorem*{lem*}{Lemma}
\newtheorem{prop}[thm]{Proposition}
\newtheorem*{prop*}{Proposition}
\theoremstyle{definition}
\newtheorem{defn}[thm]{Definition}
\newtheorem*{defn*}{Definition}
\newtheorem{conjecture}{Conjecture}
\newtheorem*{conjecture*}{Conjecture}
\newtheorem*{condition*}{Condition}
\newtheorem*{assumption*}{Assumption}
\theoremstyle{remark}
\newtheorem{rem}[thm]{Remark}
\newtheorem*{rem*}{Remark}
\newtheorem*{notation}{Notation}
\newtheorem*{problem*}{Problem}
\numberwithin{equation}{section}
\newcommand{\BQ}{\mathbb Q}
\newcommand{\BC}{\mathbb C}
\newcommand{\BZ}{\mathbb Z}
\newcommand{\BP}{\mathbb P}
\newcommand{\CC}{\mathcal C}
\newcommand{\CD}{\mathcal D}
\newcommand{\CE}{\mathcal E}
\newcommand{\CF}{\mathcal F}
\newcommand{\CO}{\mathcal O}
\newcommand{\CT}{\mathcal T}
\newcommand{\wt}{\widetilde}
\newcommand{\eps}{\epsilon}
\newcommand{\veps}{\varepsilon}
\newcommand{\chern}{\mathrm{Chern}}
\newcommand{\verlinde}{\mathrm{Ver}}
\newcommand{\Z}{\mathbb Z}
\newcommand{\rk}{\mathrm{rk}}
\DeclareMathOperator{\ch}{ch}
\DeclareMathOperator{\Li}{Li}
\DeclareMathOperator{\res}{res}
\DeclareMathOperator{\pExp}{Exp}
\DeclareMathOperator{\Hilb}{Hilb}
\DeclareMathOperator{\Pic}{Pic}
\newtheorem*{acknowledgements}{Acknowledgements}
\newenvironment{LG}{\noindent \color{red}{\bf LG:} \footnotesize}{}
\title[Verlinde and Segre formula]{Refined Verlinde and Segre formula for Hilbert schemes}
\author{Lothar G\"ottsche}
\email{gottsche@ictp.it}
\address{International Centre for Theoretical Physics,\\
 Strada Costiera 11, 34151 Trieste, Italy}
\author{Anton Mellit}
\email{anton.mellit@univie.ac.at}
\address{Faculty of Mathematics, University of Vienna, \\
Oskar-Morgenstern-Platz 1, 1090 Vienna, Austria}
\begin{document}
\onehalfspacing

\begin{abstract}
Let   $\Hilb_nS$ be the Hilbert scheme of $n$ points on a smooth projective surface $S$.
To a class $\alpha\in K^0(S)$  correspond a tautological vector bundle $\alpha^{[n]}$ on $\Hilb_nS$ and  line bundle
$L_{(n)}\otimes E^{\otimes r}$ with $L=\det(\alpha)$, $r=\rk(\alpha)$. In this paper we give closed formulas for the  generating functions for the
Segre classes $\int_{\Hilb_nS} s(\alpha^{[n]})$, and the Verlinde numbers $\chi(\Hilb_nS,L_{(n)}\otimes E^{\otimes r})$, for any surface $S$ and  any class $\alpha\in K^0(S)$.
In fact we determine a more general generating function for $K$-theoretic invariants of Hilbert schemes of points, which contains the formulas for Segre and Verlinde numbers as
specializations.
We prove these formulas in case $K_S^2=0$. Without assuming the condition $K_S^2=0$, we show the Segre-Verlinde conjecture of Johnson and Marian-Oprea-Pandharipande, which relates the Segre and Verlinde generating series by an explicit change of variables.
\end{abstract}

\date\today
\maketitle

\tableofcontents

\section{Introduction}
Let $S$ be a smooth projective surface. Among the most basic moduli spaces associated to $S$, with connections to many other moduli spaces,  are the Hilbert schemes $\Hilb_nS$ of $n$ points on $S$,
which have been a focus of interest for many years. In particular the following two important series of enumerative invariants of the  Hilbert schemes of points have been studied for more than 2 decades.
%\begin{enumerate}

(1) Chern and Segre series of tautological bundles.\\
Let $Z_n(S)\subset S\times \Hilb_nS$ be the universal subscheme, with projections $p:Z_n(S)\to \Hilb_nS$, $q:Z_n(S)\to S$.
For  a vector bundle $V$ on $S$, the corresponding tautological bundle is $V^{[n]}:=p_*q^*(V)$, a vector bundle of rank $n\rk(V)$ on $\Hilb_nS$.
This extends to a homomorphism $\bullet^{[n]}:K^0(S)\to K^0(\Hilb_nS)$ of Grothendieck groups of vector bundles.
We consider  the series of Chern integrals
\[
I_{S,\alpha}^C(x):=\sum_{n\ge 0} x^n \int_{\Hilb_nS} c_{2n}(\alpha^{[n]}),
\]
for any class $\alpha\in K^0(S)$, and  the corresponding Segre series
\[
I_{S,\alpha}^S(x):=\sum_{n\ge 0} x^n \int_{\Hilb_nS} s_{2n}(\alpha^{[n]}).
\]
We will always write $k=\rk(\alpha)$. Because of the obvious identity  $I_{S,\alpha}^S(x)=I_{S,-\alpha}^C(x)$ we will concentrate on
$I_{S,\alpha}^C(x)$.

(2)  The Verlinde series.\\
Let $\sigma:S^n\to S^{(n)}$ be the quotient map to the symmetric power, and $\pi:\Hilb_nS\to S^{(n)}$ the Hilbert-Chow morphism.
For a line bundle $L\in \Pic(S)$ let $L_{(n)}:=\pi^*\sigma_*(\otimes_{i=1}^n pr^*_iL)^{\mathfrak S_n}$ be the pullback of the symmetrized pushforward.
Furthermore for $n\ge 2$ let $E:=-\frac{1}{2}D$, where $D$ is the exceptional divisor of $\pi$.
Then it is well-known that $\Pic(\Hilb_nS)=\Pic(S)_{(n)}\oplus \Z E$.
Furthermore for $\beta\in K^0(S)$ we have
$\det(\beta^{[n]})=L_{(n)}\otimes E^{\otimes r}$  with $\det(\beta)=L$ and $\rk(\beta)=r$.
The Verlinde series associated to $(S,\beta)$ is the generating series of the holomorphic Euler characteristics
\[
I^V_{S,\beta}(t)=\sum_{n\ge 0} t^n \chi(\Hilb_nS,\det(\beta^{[n]})),
\]
%\end{enumerate}
where we always write $r=\rk(\beta)$.
Finally when $k$ and $r$ occur together we will always have $r=k-1$.

For $\alpha=L$ a line bundle, the coefficients of the Segre series $I_{S,L}^S(z)$ were already considered in
\cite{ellingsrudbott} in the case $S=\BP^2$; the first 8 terms were computed and related to the degrees of the varieties of sums of powers of ternary linear forms, to the counting of
Darboux curves and to Donaldson invariants of $\BP^2$.
The well-known Lehn conjecture \cite{lehnchern} is  a conjectural formula for  $I_{S,L}^S(z)$ for any surface and any line bundle $L\in \Pic(S)$.
It was first proven in \cite{marianSegre} in the special case of $K$-trivial surfaces, and then in general in \cite{voisinSegre},
\cite{marian2019combinatorics}. Finally, in \cite{marian2017higher}, the case of $I_{S,\alpha}^C(x)=I_{S,-\alpha}^S(x)$ for an arbitrary class $\alpha\in K^0(S)$ is considered:
Applying the cobordism invariance of \cite{ellingsrud2001cobordism}, one can write  $I_{S,\alpha}^C(x)$   in the following form
\begin{align*}
I_{S,\alpha}^C(x)&= A_0(x)^{c_2(\alpha)} A_1(x)^{\chi(\det(\alpha))} A_2(x)^{\frac{1}{2}\chi(\CO_S)} A_3(x)^{c_1(\alpha)K_S -\frac{1}{2}K_S^{2}}  A_4(x)^{K_S^2}.
\end{align*}
Furthermore, by  explicitly determining the invariants for $K3$ surfaces, they determined
$ A_0$, $ A_1$, $ A_2$  as  algebraic functions.
Finally they proved explicit formulas as algebraic functions for $A_3(x)$ and $ A_4(x)$ for  $k=-1$, $k=-2$.
In case $k=0$ they showed (in our notation) that $A_4=1$ and conjectured  $A_3=1$, which was recently proved  in \cite{Yuan2022}.

Identifying a subscheme  $Z\in \Hilb_nS$ with its ideal sheaf $I_{Z}$, the Hilbert scheme $\Hilb_nS$ can be viewed as a moduli space of stable  rank $1$  torsion free sheaves on $S$ with Chern classes $c_1=0$ and $c_2=n$. Thus a formula for the Verlinde series $I^V_{S,\beta}(t)$ can be viewed as the rank $1$ case of a  general Verlinde formula for surfaces, computing the holomorphic Euler characteristics of all determinant bundles on all moduli spaces of sheaves on $S$. This is the higher dimensional  analogue of the famous Verlinde formula \cite{Verlinde} for curves, which computes the dimensions of the spaces of sections of
determinant bundles on moduli spaces of vector bundles on curves.
%The Verlinde formula for curves has been proven by a number of authors (see \cite{BeauvilleVerlinde}, \cite{FaltingsVerlinde} for the proof in arbitrary rank).

By \cite{ellingsrud2001cobordism}  the Verlinde series $I^V_{S,\beta}(t)$ has  for all surfaces $S$ and all $\beta\in K^0(S)$ the factorization (recall $L=\det(\beta)$)
\[
I^V_{S,\beta}(t)=B_1(t)^{\chi(L)} B_2(t)^{\frac{1}{2}\chi(\CO_S)}B_3(t)^{LK_S-\frac{1}{2}K_S^2}B_4(t)^{K_S^2}.
\]
With the change of variables $t=-y(1-y)^{r^2-1}$ we have
\[
B_1(t)=1-y, \quad B_2(t)= \frac{(1-y)^{r^2}}{(1-r^2y)};
\]
Furthermore $B_3=B_4=1$ for $r=-1,0,1$, and replacing $r$ by $-r$ sends $B_3(t)$ to $\frac{1}{B_3(t)}$ and $B_4(t)$ to itself.

Based on Le Potier's strange duality conjecture for surfaces \cite{LePotierStrange},  in \cite{JohnsonHilbert} the Segre and Verlinde series are related to each other by a change of variables, whose explicit form was determined in \cite{marian2019combinatorics}. This  gives the following conjectural Verlinde-Segre correspondence.
\begin{conjecture}
\label{VerSeg}\cite{JohnsonHilbert}, \cite{marian2019combinatorics}.
Fix $k=r+1$ then
\[
 A_3(x)=B_3(t), \quad  A_4(x)=B_4(t),
\]
under the change of variables
\[
x=s(1-rs)^{-r}, \quad t=\frac{s(1-(r-1)s)^{r^2-1}}{(1-rs)^{r^2}}.
\]
\end{conjecture}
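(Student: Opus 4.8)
The plan is to exploit the fact, recorded in the introduction, that both $I^C_{S,\alpha}$ and $I^V_{S,\beta}$ arise as specializations of one more general $K$-theoretic generating function. By the cobordism invariance of Ellingsrud--G\"ottsche--Lehn this master series factors universally as a product of universal power series raised to the Chern numbers $c_2(\alpha)$, $\chi(\det\alpha)$, $\tfrac12\chi(\CO_S)$, $c_1(\alpha)K_S-\tfrac12 K_S^2$ and $K_S^2$; the two displayed factorizations are its images under the Segre and Verlinde specializations. In particular the factors $A_3,A_4$ on the Segre side and $B_3,B_4$ on the Verlinde side are the images of a single universal pair of series $\mathcal A_3,\mathcal A_4$ attached to the exponents $c_1K_S-\tfrac12 K_S^2$ and $K_S^2$. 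Thus the entire content of the conjecture is that these two specializations of $\mathcal A_3$ and $\mathcal A_4$ coincide once the master variable is eliminated, and the first step is to make the two specialization maps explicit.

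I would introduce a master variable $s$ and check that the Segre specialization sends $s\mapsto x=s(1-rs)^{-r}$ while the Verlinde specialization sends $s\mapsto t=s(1-(r-1)s)^{r^2-1}(1-rs)^{-r^2}$, so that eliminating $s$ reproduces the stated change of variables. As a consistency test I would verify that the Verlinde parameter $y$ of the formulas $B_1=1-y$ and $B_2=(1-y)^{r^2}/(1-r^2y)$, for which $t=-y(1-y)^{r^2-1}$, is recovered by $y=-s/(1-rs)$: indeed $1-y=(1-(r-1)s)/(1-rs)$, and substituting gives back the expression for $t$, confirming that the two parametrizations glue along $s$. With this in hand the claim $A_3=B_3$ is the visible part: its exponent reduces to $c_1K_S$ when $K_S^2=0$, so the closed formulas proved in the $K_S^2=0$ case already determine both $A_3$ and $B_3$, and substituting $x(s)$ and $t(s)$ verifies the equality directly.

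The main obstacle is $A_4=B_4$. Its exponent is exactly $K_S^2$, so this factor is entirely invisible on the surfaces with $K_S^2=0$ for which closed formulas are available, and it cannot be reached by the route just used for $A_3$. To capture it I would compute the universal contribution $\mathcal A_4$ directly from the master $K$-theoretic series by equivariant localization on $\Hilb_n(\BC^2)$, where the $K_S^2$-weight is isolated as a definite monomial in the two equivariant parameters, and then verify that the Segre and Verlinde specializations of this single equivariant series agree after the substitution above. The symmetry noted in the introduction, under which $r\mapsto -r$ fixes $B_4$ while inverting $B_3$, together with the base cases $B_3=B_4=1$ for $r\in\{-1,0,1\}$, should both constrain $\mathcal A_4$ and serve as strong consistency checks; I expect an induction on $r$ built from this symmetry to be the cleanest way to reduce $A_4=B_4$ to finitely many verified coefficients.
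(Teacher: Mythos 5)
Your setup coincides with the paper's: a master $K$-theoretic series $I_{S,\alpha}(w,z)$, the cobordism factorization into universal factors $G_0,\ldots,G_4$, and the identification of $A_i$ and $B_i$ as its Segre and Verlinde specializations; your parametrization $y=-s/(1-rs)$ also matches the paper's remark that Conjecture \ref{VerSeg} follows from Corollary \ref{VerSegCor} by putting $s=\frac{-y}{1-ry}$. But there is a genuine gap exactly where you locate the difficulty: you provide no mechanism that forces the two specializations of the \emph{unknown} $K_S^2$-factor to agree. Equivariant localization only re-expresses $\log G_4$ as a coefficient extraction from the partition function $\Omega$ (this is Proposition \ref{integrals} together with \eqref{GCEF}); it does not by itself compare the Chern-type limit \eqref{eq:from H to Hc} with the Verlinde-type limit \eqref{eq:from H to Hv}. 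The paper's essential input, absent from your proposal, is structural: an identity for modified Macdonald polynomials yields a functional equation (``symmetry'', Theorem \ref{thm:symmetry}) for the coefficients $H_{d_1,d_2}$ of $\log\Omega$, while the absence of a pole at $\veps=0$ in the Chern substitution yields ``regularity'' (Definition \ref{defn:regularity}). The specific combinations of the $H_{d_1,d_2,k}$ computing $\log(G_0G_1)$, $\log G_3$ and $\log G_4$ are symmetric and regular, and Theorem \ref{thm:symmetric regular} classifies such series: under $w=\frac{u(1-u)^{k-1}}{v(1-v)^{k-1}}$, $z=\frac{v}{(1-u)^{k-1}}$ each one is a power series $h$ in the single variable $y=\frac{uv}{(1-u)(1-v)}$, and Theorem \ref{thm:chern verlinde} shows that the Chern and Verlinde limits of such a series both recover $h(y)$ under the stated one-variable substitutions. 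That is what proves $A_4(x)=B_4(t)$ (and simultaneously $A_3(x)=B_3(t)$) \emph{without} any closed formula for $G_4$.

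Two further concrete problems. First, for $A_3=B_3$ you invoke ``the closed formulas proved in the $K_S^2=0$ case'', but no such general closed formulas exist prior to this paper: the literature has $A_3$ only for $k=-1,-2,0$ and $B_3$ only for $r=-1,0,1$. Deriving the general formula for $G_3$ (hence $A_3$, $B_3$) is itself part of the paper's theorem and uses the same symmetry/regularity machinery, so your step for $A_3$ assumes what must be proven. Second, your fallback for $A_4=B_4$ cannot be completed as stated: the universal series for different values of $r$ are not related by any recursion, so there is no inductive step from the $r\mapsto -r$ symmetry and the base cases $r\in\{-1,0,1\}$; and an identity of power series (with coefficients that are polynomials in $r$) cannot be reduced to ``finitely many verified coefficients'' without an a priori rigidity or algebraicity statement, which you do not establish.
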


In this paper we prove a closed formula for $ A_3(x)$ and $B_3(t)$  and give a conjectural formula for
 $ A_4(x)$ and $B_4(t)$ for arbitrary $k=r+1$. Thus we obtain complete conjectual Segre and Verlinde formulas for Hilbert schemes of points on any surface $S$,
 and we prove them when $K_S^2=0$. Furthermore we prove the Segre-Verlinde correspondence in general.
 In addition our methods give independent proofs for the formulas $ A_0(x)$, $ A_1(x)$, $  A_2(x)$, of \cite{marian2017higher}, and also express them in a slightly simpler form.

We obtain these formulas as specializations of a considerably more general result. We introduce a  $K$-theoretic invariant of Hilbert schemes of points, which has
 both the Verlinde and Segre invariants as specializations, and determine its generating function.  The general shape of this generating function then implies in particular the
 Verlinde-Segre correspondence, and to some extend explains it.
For $\alpha\in K^0(S)$ we put
 \begin{equation}\label{chilambda}
I_{S,\alpha}(w,z):=\sum_{n\ge 0} (-w)^n\chi\left(\Hilb_nS, \left(\Lambda_{-z} \alpha^{[n]}\right)\otimes \det(\CO_S^{[n]})^{-1}\right).
\end{equation}
Here $\Lambda_{-z}: (K^0(S),+)\to (K^0(S)[[z]],\cdot)$ is the homomorphism given by
\[
\Lambda_{-z} (W)=\sum_n (-z)^n \Lambda^{n}W, \qquad \Lambda_{-z} (-W)=\sum_n z^n S^nW,
\]
for $W$ a vector bundle.
It is easy to see that both
$I_{S,\alpha}^C(z)$ and $I_{S,\alpha-\CO_S}^V(t)$ are specializations of $I_{S,\alpha}(w,z)$, in fact for $k=\rk(\alpha)$ we have
\begin{equation}\label{Ispecial}
\begin{split} I_{S,\alpha}^C(x)=I_{S,\alpha}\Big(-\eps^{2-k}(1+\eps)^kx,\frac{1}{1+\eps}\Big)\Big|_{\eps^0},\qquad
I_{S,\alpha-\CO_S}^V(t)=I_{S,\alpha}(-tz^{-k},-z)|_{z^0}.
\end{split}
\end{equation}
Similar expressions to \eqref{chilambda} have been considered in  \cite{Arbesfeld2019} and \cite{Arbesfeld2022}, and very similar expressions were
studied in \cite{Bojko2021} for virtual invariants of Quot schemes of curves, surfaces and $4$-folds and  used to understand the Verlinde-Segre correspondence for these virtual invariants.
The following is our main result.
\begin{thm}
Fix $k\in \Z$. Then there are power series
$G_0,G_1,G_2,G_3,G_4\in \Z[[w,z]]$, such that for all smooth projective surfaces $S$ and all $\alpha\in K^0(S)$ of rank $k$ we have
 \begin{equation}\label{prodform}
I_{S,\alpha}(w,z)=G_0(w,z)^{c_2(\alpha)}G_1(w,z)^{\chi(\det(\alpha))}G_2(w,z)^{\frac{1}{2}\chi(\CO_S)}G_3(w,z)^{c_1(\alpha)K_S -\frac{1}{2}K_S^{2}} G_4(w,z)^{K_S^2}.
\end{equation}
With the change of variables
 \begin{equation}\label{varchange}  w=\frac{u(1-u)^{k-1}}{v(1-v)^{k-1}}, \qquad z=\frac{v}{(1-u)^{k-1}}, \qquad\end{equation}
 we have
 \begin{align*}
 G_0(w,z)&=\frac{(1-u-v)^{k}}{(1-v)^{k-1}\big((1-u)^{k-1}-v\big)},\\
  G_1(w,z)&=\frac{(1-v)^{k-2}\big((1-u)^{k-1}-v\big)}{(1-u)(1-u-v)^{k-1}},\\
 G_2(w,z)& =\frac{(1-\frac{u}{v})^2(1-v)^{(k-2)^2}\big((1-u)^{k-1}-v\big)^{2(k-1)} }{(1-u-v)^{(k-1)^2}(1-u)^{k^2-2k}\big(1-u-v-(k^2-2k)uv\big)}.
\end{align*}
With the further change of variables $y=\frac{uv}{(1-u)(1-v)}$, we have that
$$G_3(w,z)=(1-y)^{-\frac{k-1}{2}}\exp\left(\sum_{n=1}^\infty-\frac{y^n}{2n}\left.\left(\frac{x^{k-1}-x^{1-k}}{x-x^{-1}}\right)^{2n}\right|_{x^0} \right),$$
and $G_4(w,z)\in \BQ[[y]]$.
\end{thm}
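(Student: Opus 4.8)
Here is the plan. First I would establish the multiplicative structure \eqref{prodform} and reduce everything to the determination of five universal power series. By Hirzebruch--Riemann--Roch each summand of \eqref{chilambda} is the integral over $\Hilb_nS$ of a universal polynomial in the Chern classes of the tautological bundle $\alpha^{[n]}$, of $\det(\CO_S^{[n]})$, and of the tangent bundle $T_{\Hilb_nS}$. The cobordism invariance of \cite{ellingsrud2001cobordism} then shows that such tautological integrals depend multiplicatively on the five characteristic numbers $c_2(\alpha)$, $\chi(\det\alpha)$, $\chi(\CO_S)$, $c_1(\alpha)K_S-\tfrac12K_S^2$ and $K_S^2$; this yields \eqref{prodform} for all smooth projective $S$ and defines $G_0,\dots,G_4$, whose integrality in $\Z[[w,z]]$ is inherited from that of the Euler characteristics. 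Since the $G_i$ are independent of $S$, it then suffices to evaluate $I_{S,\alpha}(w,z)$ on a convenient supply of test pairs $(S,\alpha)$.

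Next I would separate the five series by choosing surfaces on which some of the characteristic numbers vanish, the five exponents being independent linear functionals. On an abelian surface $K_S=0$, $\chi(\CO_S)=0$ and $K_S^2=0$, so \eqref{prodform} collapses to $I_{S,\alpha}=G_0^{c_2(\alpha)}G_1^{\chi(\det\alpha)}$; varying $\alpha$ so that $c_2(\alpha)$ and $c_1(\alpha)^2$ move independently separates $G_0$ from $G_1$. On a $K3$ surface $K_S=0$ and $\chi(\CO_S)=2$, so after dividing out the now-known $G_0,G_1$ one reads off $G_2$. To reach $G_3$ I would use properly elliptic surfaces, where $K_S$ is a nonzero multiple of the fiber class, so that $K_S^2=0$ while $c_1(\alpha)\cdot K_S$ can be made nonzero; dividing out $G_0,G_1,G_2$ isolates $G_3$. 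Crucially, the exponent $K_S^2$ of $G_4$ vanishes on all of these surfaces, so $G_4$ is invisible to them. This is precisely why only the structural statement $G_4\in\BQ[[y]]$ is obtainable here and the complete formulas are proven exactly when $K_S^2=0$.

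For the evaluation itself the engine is equivariant $K$-theoretic localization. Modelling $S$ near a fixed point by $\BC^2$ with the $(\BC^*)^2$-action, the fixed points of $\Hilb_n\BC^2$ are the monomial ideals indexed by partitions $\lambda\vdash n$, and the $(\BC^*)^2$-characters of $T_{\Hilb_n}$, of $\alpha^{[n]}$ and of $\det(\CO_S^{[n]})$ are explicit products over the boxes of $\lambda$ in terms of arm and leg lengths and the weights $t_1,t_2$. Under the dictionary $K_S\leftrightarrow-(t_1+t_2)$, $c_2(S)\leftrightarrow t_1t_2$, with the Chern data of $\alpha$ entering through additional equivariant weights, the series $\sum_n(-w)^n(\cdots)$ becomes an explicit sum over all partitions. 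I would resum this sum in closed form, match it term by term against \eqref{prodform} to produce the candidate formulas, and then let the compact test surfaces of the previous step upgrade these identities to theorems for $G_0,\dots,G_3$. The specializations \eqref{Ispecial} to the Chern series $A_0,A_1,A_2$ of \cite{marian2017higher} and to the Verlinde series $B_1,B_2$ provide decisive consistency checks along the way.

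The hard part will be the closed-form summation of the partition series and its identification with the rational expressions after the substitution \eqref{varchange} together with the auxiliary variable $y=\frac{uv}{(1-u)(1-v)}$: this change of variables is engineered to linearize a plethystic structure, and recognizing that $\log G_3$ has the stated form, with the coefficients $\left.\left(\frac{x^{k-1}-x^{1-k}}{x-x^{-1}}\right)^{2n}\right|_{x^0}$ emerging as $SL_2$-characters from resumming the box-by-box contributions of $\alpha^{[n]}$ twisted by $\det(\CO_S^{[n]})^{-1}$, is the delicate point. Proving $G_4\in\BQ[[y]]$, i.e. that after the substitution $G_4$ depends on $(w,z)$ only through $y$, is a further structural constraint that I would deduce from the symmetry $u\leftrightarrow v$ (the refined shadow of the $r\mapsto-r$ symmetry of $B_3,B_4$) combined with the shape of the equivariant answer. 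I expect the genuinely resistant computation to be $G_3$, where the interaction of the determinant twist with $K_S$ must be controlled precisely on the elliptic surfaces.
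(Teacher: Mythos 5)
Your first step (the product formula \eqref{prodform} via the cobordism invariance of \cite{ellingsrud2001cobordism}, reducing everything to universal series) matches the paper, and your identification of localization over partitions as the computational engine is also the right starting point. But there are two genuine gaps. The decisive one is the phrase ``I would resum this sum in closed form'': the sum over partitions (the master partition function $\Omega$) has \emph{no} closed form, and the paper never resums it. What the paper actually does is circumvent resummation entirely: identities of modified Macdonald polynomials (the Cauchy identity together with the Garsia--Tesler identity \eqref{eq:garsia tesler}) yield a functional equation $\wt\Omega(w;z_1,\dots,z_k;q,t)=\wt\Omega(w^{-1};wz_1,\dots,wz_k;q,t)$, hence a symmetry for the coefficients of $\log\Omega$ (Theorem \ref{thm:symmetry}); the Chern specialization having no pole at $\veps=0$ yields a separate \emph{regularity} constraint (Definition \ref{defn:regularity}); and the key structural result, Theorem \ref{thm:symmetric regular}, shows that symmetric regular series are exactly those that become functions of $y=\frac{uv}{(1-u)(1-v)}$ after the substitution \eqref{varchange}. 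Only then are the candidate formulas verified, by telescoping identities and by matching boundary data such as the known $B_2$ of \cite{ellingsrud2001cobordism} (see Remark \ref{rem:recursion}: symmetry plus regularity determine a series from very little data). In particular, the statement $G_4\in\BQ[[y]]$ \emph{is} this classification theorem applied to $\log G_4$; your proposal to deduce it from ``the shadow of the $r\mapsto -r$ symmetry'' has no proof mechanism behind it, whereas in the paper the symmetry is a theorem imported from Macdonald polynomial theory, not an observed feature of the answer.

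The second gap is your separation strategy: abelian surfaces, $K3$ surfaces and properly elliptic surfaces carry no algebraic torus action with isolated fixed points, so equivariant localization cannot evaluate $I_{S,\alpha}$ on any of them; your plan uses non-toric surfaces for separation but a toric tool for evaluation, and these are incompatible. The paper needs no special surfaces at all: working only with toric $S$, the five universal coefficients $C_2$, $C_{1,1}$, $D_1$, $E$, $F$ (hence all $G_i$, including the exponent of $G_4$) are extracted directly from the structure of the fixed-point sum, using the vanishing identities \eqref{eq:vanishing1}--\eqref{eq:vanishing3} and the localization identities \eqref{eq:localization1}--\eqref{eq:localization5} (Proposition \ref{integrals}). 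Note also that toric test data suffice to separate all five exponents, and a separate argument (Remark \ref{univalpha}) handles arbitrary $K$-theory classes $\alpha$ of any rank by showing the coefficients are universal polynomials in $k$; your proposal does not address how negative-rank classes are reached from actual equivariant bundles.
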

Here the product formula \eqref{prodform} is a direct application of \cite[Thm.~4.2]{ellingsrud2001cobordism}, and the substance of the theorem are the formulas for the universal power series $G_i(w,z)$.
 Specializing, we obtain the Verlinde-Segre correspondence and the Verlinde and Segre formulas for Hilbert schemes.
The Verlinde-Segre correspondence follows, because $B_3(t)$, $ A_3(x)$ and $B_4(t)$, $ A_4(x)$  are explicit specializations of  $G_3(w,z)$, and  $G_4(w,z)$, together with the fact that after the changes of variables \eqref{varchange} and $y=\frac{uv}{(1-u)(1-v)}$, (or equivalently $u=\frac{-(1-v^{-1})y}{1-(1-v^{-1})y}$),
both  $G_3(w,z)$ and $G_4(w,z)$ depend only on $y$.
 \begin{cor}\label{VerSegCor} Conjecture \ref{VerSeg} is true. Explicitly we have the following. Assume $r=k-1$, then under the changes of variables \eqref{varchange}, $y=\frac{uv}{(1-u)(1-v)}$, and
 \begin{equation} \label{varchange1} x=-y(1-ry)^{r-1},\quad t=-y(1-y)^{r^2-1}
 \end{equation}
 we have
 \begin{align*}
 B_1(t)&=  A_0(x)  A_1(x)=G_0(w,z)G_1(w,z),\\
 B_3(t)&=  A_3(x)=G_3(w,z),\quad B_4(t)= A_4(x)=G_4(w,z).
 \end{align*}
 \end{cor}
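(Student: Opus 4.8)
The plan is to feed the explicit formulas of the theorem into the two specializations \eqref{Ispecial} and to express $A_i$ and $B_i$ through the $G_i$. Everything collapses because, after \eqref{varchange} and $y=\frac{uv}{(1-u)(1-v)}$, all three quantities named in the corollary depend on $y$ alone. For $G_3$ and $G_4$ this is the content of the theorem; for the remaining product it follows from a direct cancellation,
\[
G_0(w,z)\,G_1(w,z)=\frac{1-u-v}{(1-u)(1-v)}=1-\frac{uv}{(1-u)(1-v)}=1-y.
\]
Thus $B_1=A_0A_1=G_0G_1$, $B_3=A_3=G_3$ and $B_4=A_4=G_4$ are equalities of functions of the single variable $y$, and only the two parametrizations of $y$ remain, namely $x=-y(1-ry)^{r-1}$ on the Segre side and $t=-y(1-y)^{r^2-1}$ on the Verlinde side, with $r=k-1$.

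A structural remark organizes the computation. Writing \eqref{varchange} as $z=\frac{v}{(1-u)^{k-1}}$ and $wz=\frac{u}{(1-v)^{k-1}}$, one sees that it is invariant under the involution $u\leftrightarrow v$, which on the master variables is $(w,z)\mapsto(1/w,wz)$. Since $y$ is symmetric in $u,v$ it is fixed by this involution, while a direct check shows the involution interchanges the Segre substitution $z=\frac1{1+\eps},\,w=-\eps^{2-k}(1+\eps)^kx$ with the Verlinde substitution $w=-tz^{-k}$ of \eqref{Ispecial}. This is the structural reason why one expects any quantity symmetric in $u,v$ and depending only on $y$ to have equal Segre and Verlinde specializations, as confirmed by the explicit computation below.

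I would then compute the two values of $y$. On the Verlinde side, extracting $[\,\cdot\,]|_{z^0}$ after $w=-tz^{-k}$ selects in the $n$-th term the top exterior power $\Lambda^{kn}\alpha^{[n]}=\det(\alpha^{[n]})$, so after the twist by $\det(\CO_S^{[n]})^{-1}$ the specialization reproduces the Verlinde series term by term. The substitution fixes $wz^k=(-1)^{k+1}t$, i.e.\ $\frac{uv^{k-1}}{(1-u)^{(k-1)^2}(1-v)^{k-1}}=(-1)^{k+1}t$, and since the outcome cannot depend on $c_2(\alpha)$ whereas the product carries $G_0^{c_2(\alpha)}$, the extraction localizes on $\{G_0=1\}$; there the Verlinde exponents coincide with $e_1,\dots,e_4$, giving $B_j=G_j|_{\{G_0=1\}}$, and the displayed relation becomes $t=-y(1-y)^{r^2-1}$. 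On the Segre side the substitution fixes $wz^2(1-z)^{k-2}=-x$, i.e.\ $\frac{uv((1-u)^{k-1}-v)^{k-2}}{(1-v)^{k-1}(1-u)^{(k-1)^2}}=-x$, and the extraction $[\,\cdot\,]|_{\eps^0}$ localizes, by Lagrange inversion, on a curve parametrized by $y$ where this relation becomes $x=-y(1-ry)^{r-1}$. Since $G_0G_1=1-y$ and $G_3,G_4$ depend only on $y$, we obtain $B_1=A_0A_1=1-y$, $B_3=A_3=G_3$ and $B_4=A_4=G_4$ as functions of $y$; as the two sides are parametrized by the same $y$ through \eqref{varchange1}, this is precisely Conjecture \ref{VerSeg}.

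The step I expect to cost the most is the justification that the two extractions localize. Neither $[\,\cdot\,]|_{\eps^0}$ nor $[\,\cdot\,]|_{z^0}$ commutes with the product $\prod_iG_i^{e_i}$, so one cannot extract factor by factor; one must instead show that after \eqref{varchange} the relevant contour integral is governed by a single simple residue that evaluates the whole product at one point. On the Verlinde side the selection of a single exterior power makes this transparent and pins the locus to $\{G_0=1\}$, which is exactly what cancels the spurious $c_2(\alpha)$-dependence. On the Segre side the extraction is a genuine weighted sum over all exterior powers and $G_0$ itself acquires a high-order pole along the substitution, so the real work is to run the Lagrange inversion carefully enough to confirm that the localization is governed by $x=-y(1-ry)^{r-1}$, consistently with the $u\leftrightarrow v$ symmetry.
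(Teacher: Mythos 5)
Your reduction has the right skeleton, and part of it agrees with the paper: after \eqref{varchange} the three series in question depend on $y$ alone (indeed $G_0(w,z)G_1(w,z)=1-y$, and your cancellation is correct), and the corollary then amounts to the claim that the Segre and Verlinde specializations act on such series exactly as the two substitutions \eqref{varchange1}. But that last claim --- the step you yourself flag as the expensive one --- is the entire mathematical content of the corollary, and the mechanism you propose for it is not valid. Coefficient extraction $[\,\cdot\,]|_{\eps^0}$ or $[\,\cdot\,]|_{z^0}$ on formal power series is not evaluation at a point or along a locus, and it does not commute with the product $\prod_i G_i^{e_i}$; ``the answer cannot depend on $c_2(\alpha)$, hence the extraction localizes on $\{G_0=1\}$'' is a consistency check on the desired conclusion, not an argument. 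Your structural remark also fails as stated: applying $(w,z)\mapsto(w^{-1},wz)$ to the Segre substitution gives $z'=-\eps^{2-k}(1+\eps)^{k-1}x$ and $w'(z')^k=(-1)^{k+1}\eps^{-(k-1)(k-2)}(1+\eps)^{k(k-2)}x^{k-1}$, which is not constant in the extraction variable, so the involution does not carry the Segre substitution of \eqref{Ispecial} to the Verlinde one in any literal sense.

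What closes this gap in the paper is machinery your proposal neither reproduces nor replaces. The series $A_i$, $B_i$ are tied to the $G_i$ not through \eqref{Ispecial} applied to the product formulas (precisely because extraction and products do not commute), but additively: through the equivariant log-expansion coefficients $H_{d_1,d_2}$, the formulas \eqref{GCEF} and their Chern/Verlinde analogues, and the limits \eqref{eq:from H to Hc}, \eqref{eq:from H to Hv} applied to individual coefficient series. The notion of $d$-regularity (Definition \ref{defn:regularity}) is what guarantees those limits exist, and Theorem \ref{thm:chern verlinde} then proves, by an explicit residue and binomial-identity computation, that a symmetric regular $f$ with $f=h(y)$ satisfies $f_{\chern}\left(y(1-(k-1)y)^{k-2}\right)=f_{\verlinde}\left((-1)^k y(1-y)^{k(k-2)}\right)=h(y)$ --- which is precisely the pair of substitutions \eqref{varchange1} you assert without proof. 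On the Verlinde side your intuition can be partially salvaged: the extraction there is a top-coefficient extraction, which \emph{is} multiplicative once one has the degree bounds coming from regularity, and the rigorous counterpart of your ``$\{G_0=1\}$ locus'' is the vanishing $C_2^V(w)=0$ noted in the paper; but one still needs the residue computation to identify the substitution $t=-y(1-y)^{r^2-1}$ (compare Proposition \ref{prop VerLim}). On the Segre side the extraction $[\,\cdot\,]|_{\eps^0}$ is a genuine sum over all coefficients, no multiplicativity is available, and the Lagrange-inversion calculation you invoke is never performed. As it stands, the proposal is an outline whose decisive step is missing.
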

Note that Conjecture \ref{VerSeg} follows from Corollary \ref{VerSegCor}  by putting
 $s=\frac{-y}{1-ry}$.
\begin{cor}
Fix $r=k-1\in \Z$.
% \item With $z=y(1-ry)^{r-1}$ we have
 %\begin{align*}
% A_0(z)&=\frac{1-ry}{(1-y)^{r+1}},\quad
 %A_1(z)=\frac{(1-y)^{\frac{r}{2}+1}}{(1-ry)^{\frac{1}{2}}},\quad
% A_2(z)=\frac{(1-ry)^{r+1}}{(1-y)^{\frac{r^2}{2}+r}(1-r^2y)^{\frac{1}{2}}},\\
% A_3(z)&=\sqrt{1-r y}\left(\left.\sum_{a=1}^\infty \frac{y^a}{2a}\left(\frac{x^r-x^{-r}}{x-x^{-1}}\right)^{2a}\right|_{x^0}\right).
%\end{align*}
 Under the changes of variables
\ref{varchange1} we have
\begin{align*}
  A_0(x)&=\frac{(1-y)^{r+1}}{1-ry},\quad
  A_1(x)=\frac{1-ry}{(1-y)^r},\quad
  A_2(x)=\frac{(1-ry)^{2r}}{(1-y)^{r^2}(1-r^2y)},\\
B _1(t)&=1-y,\quad B_2(t)=\frac{(1-y)^{r^2}}{1-r^2y},\\
  A_3(x)&=B_3(t)= \frac{1}{(1-y)^{\frac{r}{2}}}\exp\left(\left.-\sum_{n=1}^\infty \frac{y^n}{2n}\left(\frac{x^r-x^{-r}}{x-x^{-1}}\right)^{2n}\right|_{x^0}\right).
\end{align*}
\end{cor}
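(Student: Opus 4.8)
The plan is to deduce the Corollary from the main Theorem and Corollary~\ref{VerSegCor} by performing the Chern and Verlinde specializations \eqref{Ispecial} of the universal series $G_i$ explicitly. Note first that only $G_0,G_1,G_2,G_3$ enter: none of the listed series $A_0,A_1,A_2,A_3,B_1,B_2,B_3$ involves $G_4$, and these four series are already pinned down by the surfaces with $K_S^2=0$ treated in the main Theorem (there $K_S^2=0$ kills both the factor $G_4^{K_S^2}$ and the $-\tfrac12K_S^2$ in the exponent of $G_3$, while $c_2(\alpha)$, $\chi(\det\alpha)$, $\tfrac12\chi(\CO_S)$, $c_1(\alpha)K_S$ still vary independently). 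Since the three product factorizations --- \eqref{prodform} for $I_{S,\alpha}(w,z)$, the $A_i$-factorization of $I_{S,\alpha}^C(x)$, and the $B_i$-factorization of $I_{S,\beta}^V(t)$ --- share the same five exponents, the task reduces to extracting the universal factors $A_i$ (resp.\ $B_i$) from the residue/coefficient operation \eqref{Ispecial} applied to the explicit product \eqref{prodform}.

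Two of the identities are then immediate. By Corollary~\ref{VerSegCor} one has $A_3(x)=B_3(t)=G_3(w,z)$ under \eqref{varchange}, $y=\tfrac{uv}{(1-u)(1-v)}$ and \eqref{varchange1}; substituting $k-1=r$ into the main Theorem's formula for $G_3$ gives verbatim the claimed closed form $(1-y)^{-r/2}\exp(\cdots)$, so the new formula for $A_3=B_3$ requires no further work. Likewise $A_0A_1=B_1=G_0G_1$, and a direct simplification in the $(u,v)$-variables, using $1-u-v=(1-y)(1-u)(1-v)$, collapses the product to
\[
G_0G_1=\frac{1-u-v}{(1-u)(1-v)}=1-y,
\]
which yields $B_1=1-y$ together with the relation $A_0A_1=1-y$ predicted by the stated factors.

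It remains to split this product into $A_0$ and $A_1$ and to obtain $A_2$ and $B_2$, which I would do by carrying out the coefficient extractions in \eqref{Ispecial} on the explicit rational functions $G_0,G_1,G_2$. After the substitution $z=\tfrac{1}{1+\eps}$ the factor $(1-u)^{k-1}-v$ occurring in $G_0,G_1$ becomes $v\eps$, so $G_0$ has a simple pole and $G_1$ a simple zero at $\eps=0$, and $|_{\eps^0}$ is a residue; I would evaluate it by Lagrange inversion, with \eqref{varchange1} serving as the inversion relation between $x$ and $y$, and similarly for the Verlinde substitution $w=-tz^{-k}$, $z\mapsto-z$ with the $z^0$-coefficient. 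A guide to the bookkeeping is that the Jacobians
\[
\frac{dx}{dy}=-(1-ry)^{r-2}(1-r^2y),\qquad \frac{dt}{dy}=-(1-y)^{r^2-2}(1-r^2y)
\]
both carry the factor $1-r^2y$, exactly the denominator appearing in $A_2$ and $B_2$; this signals that the residue prefactor from the inversion is absorbed into the $\tfrac12\chi(\CO_S)$-factor, leaving the other series free of it, in agreement with the stated formulas. These computations also recover the formulas of \cite{marian2017higher} for $A_0,A_1,A_2$ and the expression for $B_2$ recalled in the introduction, so the Corollary provides independent proofs of them.

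The main obstacle is precisely this last step: the operation $|_{\eps^0}$ (resp.\ $|_{z^0}$) does not commute with the product $\prod_iG_i^{e_i}$, because the factors carry poles and zeros along the extraction locus. One must show that the residue localizes onto the single one-parameter family cut out by \eqref{varchange1}, and that the resulting Jacobian prefactor is correctly attributed to the $\tfrac12\chi(\CO_S)$-factor rather than distributed among the others. Once this localization and attribution are justified, what remains is the routine algebraic simplification of $G_0,G_1,G_2$ along $y=\tfrac{uv}{(1-u)(1-v)}$ into the claimed rational functions of $y$.
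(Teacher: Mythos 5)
Your reduction and the two easy items are correct and coincide with the paper's own route: $A_3(x)=B_3(t)=G_3(w,z)$ and $A_0(x)A_1(x)=B_1(t)=G_0G_1=\frac{1-u-v}{(1-u)(1-v)}=1-y$ do follow from Corollary \ref{VerSegCor} together with the Theorem's formulas. But the five remaining formulas --- $A_0$, $A_1$, $A_2$ separately, and $B_2$ --- are exactly where your proposal stops being a proof, and you say so yourself ("the main obstacle is precisely this last step"). Worse, the analysis you offer in support of the proposed mechanism is incorrect. Under the Chern substitution $z=\frac{1}{1+\eps}$ one indeed has $(1-u)^{k-1}-v=\eps v$, but along the same substitution \emph{all} the other relevant factors degenerate as well: one finds $u=O(\eps)$, $1-v\sim \frac{\eps}{1-ry}$, $1-u-v\sim\frac{(1-y)\eps}{1-ry}$, $1-u-v-(k^2-2k)uv\sim\frac{(1-r^2y)\eps}{1-ry}$, with $y(u,v)\to y$ and $x=-y(1-ry)^{r-1}$ as in \eqref{varchange1}. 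The vanishing orders cancel \emph{within each factor}: $G_0$, $G_1$, $G_2$ each tend to a finite nonzero limit (giving precisely $\frac{(1-y)^{r+1}}{1-ry}$, $\frac{1-ry}{(1-y)^r}$, $\frac{(1-ry)^{2r}}{(1-y)^{r^2}(1-r^2y)}$), so there is no cross-factor pole/zero bookkeeping and no "Jacobian attribution" problem; the $1-r^2y$ in $A_2$ comes from the factor $1-u-v-(k^2-2k)uv$ of $G_2$, not from $dx/dy$. What genuinely requires proof --- and what your plan does not supply --- is that the coefficient extraction $|_{\eps^0}$ of \eqref{Ispecial}, applied to each individual $\log G_i$, may be computed as this naive limit along the substitution curve. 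The paper's regularity formalism (Definition \ref{defn:regularity} and Theorem \ref{thm:chern verlinde}) justifies exactly such statements, but only for \emph{symmetric} regular series, and $\log G_0$, $\log G_1$, $\log G_2$ are not symmetric; the transfer must therefore be done at the level of the building blocks $C_2, C_{1,1}, D_1, E, F$, i.e.\ one needs the Chern analogue of Proposition \ref{prop VerLim}.

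The paper's actual derivation of these five series is different from yours and never extracts anything from the explicit $(u,v)$-formulas by residues. On the Verlinde side it uses $C_2^V=0$ (because $\Omega^V$ depends only on $\sum_i v_i$), computes $2C_{1,1}^V\left(-y(1-y)^{k(k-2)}\right)=\log(1-y)$ from \eqref{eq:C k verlinde} and Proposition \ref{prop:C k explicit}, which gives $B_1$, and it \emph{quotes} $B_2$ from \cite{ellingsrud2001cobordism}; indeed for $B_2$ the logical direction is the reverse of yours, since the known $B_2$ is used as an input to determine $24(F^V-2E^V)$ and hence $G_2$. The Chern-side series are then obtained by applying Theorem \ref{thm:chern verlinde} (both limits of a symmetric regular series equal the same $h(y)$) to symmetric regular combinations such as $\CC_k'$, $\CD_k$, $\CF_k-2\CE_k$, and solving for $C_2^C$, $C_{1,1}^C$, $F^C-2E^C$. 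Your Verlinde-side plan has an additional structural defect the Chern side does not: $|_{z^0}$ in \eqref{Ispecial} is a constant-term extraction, not a limit, and since $\log G_2$ is regular but not symmetric, no localization-onto-a-curve statement is available for it; making your extraction of $B_2$ rigorous would require a separate diagonal-coefficient (double residue) argument that you do not give. So the proposal is correct exactly on the parts that follow formally from Corollary \ref{VerSegCor}, and has a genuine, unfilled gap on all the rest.
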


 Finally  we give  a conjectural formula for the last power series $G_4(w,z)$.  % (and thus also for $\widetilde A_4(z)$, $B_4(z)$).
 It is given in terms of  a product decomposition of
 $G_3(w,z)$.
 Note that by the results of \cite{ellingsrud2001cobordism} on $B_3$, $B_4$ and the Verlinde-Segre correspondence Corollary \ref{VerSegCor}, replacing $r$ be $-r$ replaces $G_3(w,z)$ by $\frac{1}{G_3(w,z)}$ and does not change $G_4(w,z)$. Therefore for the statement we restrict to the case  $r\ge 0$.
For $i=1,\ldots r-1$, let
$\alpha_i(y)$ be the branches of the inverse series of
\[
f(x):=\left(\frac{x^{\frac{1}{2}}-x^{-\frac{1}{2}}}{x^{\frac{r}{2}}-x^{-\frac{r}{2}}}\right)^2=x^{r-1}+\ldots.
\]
Concretely, let $g$ be the inverse series to $f(x)^{\frac{1}{r-1}}$ and put $\alpha_i(y):=g(\zeta^i y^{\frac{1}{r-1}})$ for $\zeta$ a primitive $(r-1)$-th root of unity.
Then $G_3(w,z)$ satisfies the following product formula.

\begin{prop}\label{prop:B3prod}
With $r=k-1\ge 0$ and the changes of variables
 \eqref{varchange1}, \eqref{varchange}, $y=\frac{uv}{(1-u)(1-v)}$ we have
\[
A_3(x)=B_3(t)=G_3(w,z)^2=\frac{y}{(1-y)^{r}\prod_{i=1}^{r-1} \alpha_i(y)}.
\]
\end{prop}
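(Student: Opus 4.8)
The plan is to reduce the claimed identity to a residue computation for the rational function $f$. Write $r=k-1\ (\ge 2$; the cases $r=0,1$ are trivial since there $G_3=1$) and set $P(x):=\frac{x^{r}-x^{-r}}{x-x^{-1}}=x^{r-1}+x^{r-3}+\cdots+x^{1-r}$, so that the formula for $G_3$ in the Theorem reads $G_3(w,z)=(1-y)^{-r/2}\exp\!\big(-\tfrac12 h(y)\big)$ with
\[
h(y):=\Big[-\log\big(1-y\,P(x)^2\big)\Big]\Big|_{x^0}=\sum_{n\ge 1}\frac{y^n}{n}\,\big[P(x)^{2n}\big]\Big|_{x^0}.
\]
Squaring gives $G_3^2=(1-y)^{-r}\exp(-h(y))$, so the Proposition is equivalent to the combinatorial identity $\exp(-h(y))=y\big/\prod_{i=1}^{r-1}\alpha_i(y)$. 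The bridge to $f$ is the elementary identity $P(x)^2=1/f(x^2)$, which follows from $f(\xi)=\xi^{r-1}/Q(\xi)^2$ with $Q(\xi):=1+\xi+\cdots+\xi^{r-1}$ together with $P(x)=x^{-(r-1)}Q(x^2)$. Since $P(x)^2$ is even in $x$, substituting $\xi=x^2$ turns the operator $[\,\cdot\,]|_{x^0}$ into $[\,\cdot\,]|_{\xi^0}$ and yields
\[
h(y)=\Big[\log\frac{f(\xi)}{f(\xi)-y}\Big]\Big|_{\xi^0}.
\]

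The second step is to linearize the logarithm by differentiating in $y$:
\[
h'(y)=\Big[\frac{1}{f(\xi)-y}\Big]\Big|_{\xi^0}=\oint_{|\xi|=\delta}\frac{d\xi}{2\pi i\,\xi}\,\frac{1}{f(\xi)-y},
\]
the contour being a small circle on which the geometric expansion of $1/(f(\xi)-y)$ in powers of $y/f(\xi)$ converges uniformly, so that it reproduces the required constant terms term by term. I would then evaluate this integral by residues. Clearing denominators, $f(\xi)-y=N(\xi)/Q(\xi)^2$ with $N(\xi):=\xi^{r-1}-y\,Q(\xi)^2$, a polynomial of degree $2(r-1)$ whose zeros are exactly the solutions of $f(\xi)=y$. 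By the reciprocal symmetry $f(1/\xi)=f(\xi)$ these zeros split into the $r-1$ small branches $\alpha_i(y)$ and their reciprocals $1/\alpha_i(y)$; for $|y|$ small one has $|\alpha_i|\sim|y|^{1/(r-1)}$, so the $\alpha_i$ lie inside $|\xi|=\delta$ and the $1/\alpha_i$ outside. The enclosed poles are thus $\xi=0$, contributing $\Res=1/(f(0)-y)=-1/y$, and $\xi=\alpha_j$, where from $N'(\alpha_j)=f'(\alpha_j)Q(\alpha_j)^2$ one gets $\Res=1/\big(\alpha_j f'(\alpha_j)\big)$. Hence
\[
h'(y)=-\frac1y+\sum_{j=1}^{r-1}\frac{1}{\alpha_j\,f'(\alpha_j)}.
\]

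To finish, I would differentiate the defining relation $f(\alpha_j(y))=y$ to get $\alpha_j'(y)\,f'(\alpha_j)=1$, so that $\frac{1}{\alpha_j f'(\alpha_j)}=\frac{\alpha_j'(y)}{\alpha_j(y)}$ and therefore
\[
h'(y)=\frac{d}{dy}\Big(-\log y+\sum_{j=1}^{r-1}\log\alpha_j(y)\Big)=\frac{d}{dy}\log\frac{\prod_{j}\alpha_j(y)}{y}.
\]
Integrating and fixing the multiplicative constant against $h(0)=0$ by the $y\to0$ asymptotics $\alpha_i(y)=\zeta^i y^{1/(r-1)}+\cdots$, i.e.\ $\prod_i\alpha_i(y)\sim\big(\prod_i\zeta^i\big)y$, yields $\exp(-h(y))=y\big/\prod_{i=1}^{r-1}\alpha_i(y)$, and hence $G_3^2=\dfrac{y}{(1-y)^r\prod_{i=1}^{r-1}\alpha_i(y)}$, as claimed.

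The main obstacle is the bookkeeping in the residue step: establishing rigorously that the formal operator $[\,\cdot\,]|_{\xi^0}$ coincides with the contour integral enclosing precisely the small branches $\alpha_i$ and not their reciprocals. This is exactly where the reciprocal symmetry $f(1/\xi)=f(\xi)$ and the location $|\alpha_i|\sim|y|^{1/(r-1)}$ of the branches enter, and it is also the point at which the integration constant — together with the exact power of $y$ and the leading unit $\prod_i\zeta^i$ in $\prod_i\alpha_i(y)$ — must be tracked carefully to match the normalization in the statement. The remaining algebraic inputs ($P(x)^2=1/f(x^2)$, the factorization $f(\xi)=\xi^{r-1}/Q(\xi)^2$, and $\alpha_j'(y)=1/f'(\alpha_j)$) are routine once the framework is set up.
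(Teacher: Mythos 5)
Your proposal is correct in its main thrust and shares the skeleton of the paper's proof: the reduction of the claim to $\exp(-h(y))=y/\prod_{i=1}^{r-1}\alpha_i(y)$ via the formula for $G_3$, and the passage from constant terms in $x$ to constant terms in $\xi=x^2$ (your identity $P(x)^2=1/f(x^2)$ is exactly the paper's remark that $f(x)^n|_{x^0}=f(x^2)^n|_{x^0}$). Where you genuinely differ is in how the Lagrange-inversion identity for the product of branches is established. The paper stays entirely formal: Proposition~\ref{prop:inverse1} proves the one-branch statement by a logarithmic derivative in $u$ and a formal residue substitution $u=1/f(y)$, and Proposition~\ref{prop:inverse2} deduces the $m$-branch statement by writing the branches as $g(\zeta^i u^{1/m})$ and summing over roots of unity. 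You instead differentiate $h$ in $y$, write $h'(y)=[1/(f(\xi)-y)]|_{\xi^0}$ as a contour integral, and evaluate it by residues at $\xi=0$ and at the small roots of $\xi^{r-1}-yQ(\xi)^2$, handling all branches at once by partial fractions, then integrate back. Your route is self-contained and makes the branches appear concretely as the roots inside the contour, at the cost of the analytic bookkeeping you acknowledge (convergence on $|\xi|=\delta$, root location and simplicity for small $y\neq 0$), all of which can indeed be filled in; the paper's route avoids analysis entirely and produces a reusable formal lemma (Proposition~\ref{prop:inverse2} is invoked again for Proposition~\ref{prop:Bserieslog}).

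The ``leading unit'' you flag at the integration-constant step is, however, a genuine issue and not mere bookkeeping: $\prod_{i=1}^{r-1}\zeta^i=\zeta^{r(r-1)/2}=(-1)^r$, so $\prod_{i=1}^{r-1}\alpha_i(y)=(-1)^r\,y\,(1+O(y))$, and your computation, carried to the end, yields $\exp(-h(y))=(-1)^r y/\prod_{i=1}^{r-1}\alpha_i(y)$. Thus your final line (and the Proposition as literally stated) holds for $r$ even but is off by a sign for $r$ odd: for $r=3$ the two small branches are $\pm\sqrt{y}+y+\cdots$, so the right-hand side of the Proposition is $-1+O(y)$ while $G_3^2=1+O(y)$. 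You should not count this against yourself relative to the paper: the paper's Proposition~\ref{prop:inverse2} as stated drops exactly the prefactor $\prod_{i=1}^m\zeta^i$ that its own proof produces (test it on $f(y)=y^{-2}$, $m=2$: the left side is $-1$, the right side is $1$), and that prefactor is $-1$ precisely when $m=r-1$ is even, so the paper's proof of this Proposition carries the same sign imprecision. In other words, your attempt reproduces the paper's argument faithfully, including its one flaw, and you even located the flaw; the clean statement is $G_3^2=(-1)^r y/\bigl((1-y)^r\prod_{i=1}^{r-1}\alpha_i(y)\bigr)$. (One small further caveat: your dismissal of $r=0,1$ as trivial is not quite right either, since for $r=1$ the stated right-hand side is $y/(1-y)\neq 1=G_3^2$; these degenerate cases are simply outside the scope of the formula, in both your write-up and the paper's.)
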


Now we can state the conjectural formulas for the power series  $G_4(w,z)$ in terms of the factors $\alpha_i(y)$.
  \begin{conjecture}\label{Bconj}
  With $r=k-1\ge 0$ and the changes of variables
 \eqref{varchange1}, \eqref{varchange}, $y=\frac{uv}{(1-u)(1-v)}$ we have
  \begin{align*}
  \big(G_4(w,z)G_3(w,z)^r\big)^8&=\frac{(1-r^2y)^3}{(1-y)^{3r^2}}
 \left(\prod_{i,j=1}^{r-1}(1-\alpha_i(y)\alpha_j(y))\prod_{\stackrel{i,j=1}{i\ne j}}^{r-1}(1-\alpha_i(y)^r\alpha_j(y)^r)\right)^{2},
  \end{align*}
 and $ A_4(x)=B_4(t)=G_4(w,z).$
 \end{conjecture}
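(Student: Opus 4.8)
The plan is to determine the remaining universal series $G_4(w,z)$ by equivariant localization on toric surfaces with $K_S^2\neq 0$, since $G_4$ is precisely the factor attached to the exponent $K_S^2$ in \eqref{prodform} and is therefore invisible on the $K_S^2=0$ surfaces that pin down $G_0,\ldots,G_3$. By the main theorem the product decomposition \eqref{prodform} is already established, $G_0,G_1,G_2,G_3$ are known in closed form, and $G_4\in\BQ[[y]]$ depends only on $y$; so it suffices to evaluate the single-variable series $G_4(y)$. First I would take $S$ toric, concretely $\BP^2$ (where $K_S^2=9$) and the Hirzebruch surfaces $\mathbb{F}_a$ (where $K_S^2=8$), let $T=(\BC^*)^2$ act, lift the action to $\Hilb_nS$ and to the classes $\alpha^{[n]}$ and $\det(\CO_S^{[n]})^{-1}$, and compute $\chi$ by holomorphic Lefschetz localization.

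The $T$-fixed points of $\Hilb_nS$ are indexed by tuples of partitions $(\lambda^{(p)})_{p\in S^T}$ with $\sum_p|\lambda^{(p)}|=n$, and the localized Euler characteristic factors over the fixed points $p$ of $S$. Summing over $n$ yields $I_{S,\alpha}(w,z)=\prod_{p\in S^T} Z(s_1^{(p)},s_2^{(p)};\alpha|_p)$, where $Z(s_1,s_2;\cdot)=\sum_\lambda(\cdots)$ is the partition sum for $\Hilb\BC^2$, the \emph{affine vertex}, and $(s_1^{(p)},s_2^{(p)})$ are the tangent weights of $S$ at $p$. Taking logarithms and comparing with the logarithm of \eqref{prodform}, while localizing each characteristic number $c_2(\alpha)$, $\chi(\det\alpha)$, $\chi(\CO_S)$, $c_1(\alpha)K_S-\tfrac12 K_S^2$, $K_S^2$ on $S$ into its own sum of rational functions of the $(s_1^{(p)},s_2^{(p)})$, I would identify $\log G_i$ with the coefficient of the corresponding symmetric function of $(s_1,s_2)$ in $\log Z$: the $\frac{1}{s_1 s_2}$ part gives $G_0$, the $\frac{s_1+s_2}{s_1 s_2}$ part separates into $G_1$ and $G_3$, and the $\frac{(s_1+s_2)^2}{s_1 s_2}$ part isolates $G_4$.

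To actually evaluate that last coefficient I would run a saddle-point analysis of $\log Z$. After the substitutions \eqref{varchange} and $y=\frac{uv}{(1-u)(1-v)}$ the leading part of $\log Z$ is governed by the series $f(x)$ of Proposition \ref{prop:B3prod}, whose inverse branches $\alpha_i(y)=g(\zeta^i y^{1/(r-1)})$ are exactly its saddle points; this is already what produces the product formula for $G_3$ in Proposition \ref{prop:B3prod}. The $\frac{(s_1+s_2)^2}{s_1 s_2}$ coefficient is then a second-order fluctuation (Hessian) determinant around this multi-branch saddle, and I expect it to assemble into the resultant-type product $\prod_{i,j}(1-\alpha_i\alpha_j)\prod_{i\neq j}(1-\alpha_i^r\alpha_j^r)$ of Conjecture \ref{Bconj}; the prefactor $\frac{(1-r^2y)^3}{(1-y)^{3r^2}}$ is just $B_2(t)^{-3}$, coming from the $\tfrac12\chi(\CO_S)$-series $G_2$ together with the one-leg boundary terms. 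The eighth power and the squares in the statement suggest proving the identity only after this symmetrization rather than term by term.

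The main obstacle is exactly this second-order evaluation. Whereas $G_3$ comes from the first-order (genus-zero) saddle and is proved, $G_4$ requires the full Hessian of the refined vertex and a proof that the fluctuation determinant equals the conjectured symmetric product over the $\alpha_i(y)$, with all fixed-point-dependent, non-universal contributions cancelling to leave a series in $y$ alone. A cleaner route, and probably the one with a real chance of success, would be to first give an intrinsic proof of Proposition \ref{prop:B3prod} that realizes the $\alpha_i(y)$ as genuine spectral data of an operator on Fock space, and then to recognize $G_4$ as a regularized determinant of that same operator, turning Conjecture \ref{Bconj} into a characteristic-polynomial identity.
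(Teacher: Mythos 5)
First, a point of orientation: the statement you are trying to prove is an open \emph{conjecture} in the paper, not a theorem. The authors do not prove it; their evidence consists of (a) motivation from work in progress on blowup formulas and a virtual strange duality, and (b) Proposition \ref{prop:Bconj}, a numerical verification modulo $w^{50}$, obtained by rewriting the conjectured product formula in binomial-coefficient form (Proposition \ref{prop:Bserieslog}) and comparing against a Pari/GP localization computation of $\Omega^V$. The second half of the statement, $A_4(x)=B_4(t)=G_4(w,z)$, is not conjectural: it is Corollary \ref{VerSegCor}, and it follows from the symmetry-and-regularity argument (Theorem \ref{thm:symmetric regular} plus Theorem \ref{thm:chern verlinde}) without any saddle-point analysis, so your proposal does more work than needed there.

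For the genuinely open part — the product formula for $\big(G_4G_3^r\big)^8$ — your proposal has a gap exactly where the difficulty lies. Your setup (toric localization, factorization of $I_{S,\alpha}(w,z)$ over fixed points into vertex contributions, matching $\log G_i$ with coefficients of symmetric functions of the tangent weights) is precisely the paper's own framework from Section 3; nothing new is gained there. The decisive step is to evaluate in closed form the coefficient attached to $K_S^2$, i.e.\ the combination $3\CE_k-\CF_k$ of the series $H_{-1,1,k}$ and $H_{0,0,k}$. The paper's constraints (symmetry and regularity) determine such a series only up to an \emph{arbitrary} element of $\BC[[y]]$ — this is exactly the content of Theorem \ref{thm:symmetric regular}, where the coefficients of $w^mz^{2m}$ are free — which is why $G_0,G_1,G_3$ could be pinned down by explicit telescoping identities, $G_2$ could be pinned down using the externally known $B_2$ of \cite{ellingsrud2001cobordism}, but $G_4$ could not. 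Your proposal replaces this missing input by a ``second-order fluctuation (Hessian) determinant around the multi-branch saddle,'' but you never compute it: the sentence ``I expect it to assemble into the resultant-type product $\prod_{i,j}(1-\alpha_i\alpha_j)\prod_{i\neq j}(1-\alpha_i^r\alpha_j^r)$'' is a restatement of the conjecture, not a proof of it, and your own closing paragraph concedes that this evaluation (or the alternative spectral/Fredholm-determinant interpretation) is an obstacle rather than a result. Note also that your correct observation that the prefactor is $B_2(t)^{-3}$, and any finite amount of computation on $\BP^2$ or Hirzebruch surfaces, can only ever verify finitely many coefficients — which is what the paper already does to order $w^{50}$ — and cannot close the all-orders identity.
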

As $ G_4(w,z)$ is a power series starting with $1$, this determines $G_4(w,z)$.
A higher rank generalization of the Segre and Verlinde series was studied in \cite{GottscheKool}.
Work in progress on the generating functions of these invariants, using  the  blowup formulas of \cite{GottscheBlowup} and a virtual version of
strange duality, leads to a series of conjectures about these generating functions. In the rank $1$ case, i.e. for the Hilbert schemes of points, these conjectures include the product formula for $B_3(t)$. In addition they suggest that $B_4(t)$ should  be expressible in terms
of the factors $\alpha_i(y)$ of $B_3(t)$ by means of a product formula. Using also Corollary \ref{VerSegCor}, this lead
to Conjecture \ref{Bconj}.

Below in Proposition \ref{prop:Bserieslog}, we also give an (albeit not very attractive) alternative formula for $G_4(w,z)$ in terms of binomial coefficients, which has  the advantage of being very efficiently computable. Using this alternative formula, together with computer calculations of the Verlinde numbers for $\Hilb_nS$ for $n<50$, we get the following.

 \begin{prop}\label{prop:Bconj}
 Conjecture \ref{Bconj} is true modulo $w^{50}$.
 \end{prop}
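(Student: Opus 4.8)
The plan is to verify the claimed identity of Conjecture \ref{Bconj} by computing both of its sides as power series to the order determined by $n\le 49$ and checking that they coincide. By the already established parts of the Theorem and of Corollary \ref{VerSegCor}, both sides are power series in the single variable $y$ (equivalently, each is determined modulo $w^{50}$), so once each side is known to sufficient precision the statement becomes a finite coefficientwise comparison. Since the $G_i$ and the conjectural formula depend on $k=r+1$, I would run the check for each fixed $r\ge 0$ in a suitable range (interpolating in $r$ where convenient).

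First I would extract the geometric $G_4$. Using the Verlinde factorization
\[
I^V_{S,\beta}(t)=B_1(t)^{\chi(L)}B_2(t)^{\frac12\chi(\CO_S)}B_3(t)^{LK_S-\frac12 K_S^2}B_4(t)^{K_S^2},
\]
together with the fact that $B_1,B_2,B_3$ are already determined (Theorem and Corollary \ref{VerSegCor}), the only unknown is $B_4(t)=G_4(w,z)$, which enters with exponent $K_S^2$. Hence for any surface $S$ with $K_S^2\ne 0$ one solves
\[
B_4(t)^{K_S^2}=\frac{I^V_{S,\beta}(t)}{B_1(t)^{\chi(L)}B_2(t)^{\frac12\chi(\CO_S)}B_3(t)^{LK_S-\frac12K_S^2}},
\]
and, since $B_4$ is a power series with constant term $1$, takes the unique $K_S^2$-th root to recover $B_4(t)$ to whatever order $I^V_{S,\beta}(t)$ is known. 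One toric surface with $K_S^2\ne 0$, for instance $\BP^2$ or $\BP^1\times\BP^1$, together with a convenient $\beta$, already pins down $B_4$ completely since all remaining inputs $\chi(L),\chi(\CO_S),LK_S,K_S^2$ are immediate.

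The computational engine is the evaluation of the Verlinde numbers $\chi(\Hilb_nS,\det(\beta^{[n]}))$ for $n<50$. For a toric $S$ I would use $K$-theoretic localization: the torus fixed points of $\Hilb_nS$ are the monomial ideals, indexed by tuples of partitions (one per torus-fixed point of $S$) of total size $n$, and the holomorphic Lefschetz fixed-point formula writes each $\chi$ as a finite sum over these fixed points of the localized character of $\det(\beta^{[n]})$ divided by the $K$-theoretic Euler class of the tangent space. As the answer is independent of the equivariant parameters, I would specialize the torus weights to generic values, working modulo a large prime and with the series in $t$ truncated at order $t^{50}$, so that the whole sum reduces to fast modular arithmetic; assembling the contributions for $n=0,1,\dots,49$ gives $I^V_{S,\beta}(t)$ modulo $t^{50}$, and the previous paragraph then yields $G_4(w,z)$ to the same order.

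Finally I would expand the conjectural right-hand side. Here Proposition \ref{prop:Bserieslog} is essential: it rewrites the expression of Conjecture \ref{Bconj} in terms of binomial coefficients, avoiding the implicitly defined branches $\alpha_i(y)$, so that the conjectured $G_4(w,z)$ can be expanded as a power series in $y$ rapidly and to high order. Expanding this expression and comparing coefficientwise with the geometric $G_4$ obtained above completes the verification modulo $w^{50}$. The main obstacle is the scale and bookkeeping of the localization computation: the number of partition-tuples of size up to $49$ is enormous (growing like the coefficients of $\prod_i(1-q^i)^{-N}$, with $N$ the number of torus fixed points), and each localized contribution is a priori a rational function of the weights whose pole cancellation must be handled correctly. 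Organizing the sum so that it terminates in reasonable time, with controlled truncation in $t$ and a correct passage to the non-equivariant limit, is where the real work lies.
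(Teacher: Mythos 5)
Your overall strategy is the same as the paper's: verify the identity by computer, producing the geometric side from a localization computation of the Verlinde series to order $50$ and the conjectural side from the binomial-coefficient formula of Proposition \ref{prop:Bserieslog}, then compare coefficientwise. However, the execution you describe has genuine gaps. The most serious is the claim that ``the answer is independent of the equivariant parameters,'' which is false for $K$-theoretic invariants: the equivariant Euler characteristic $\chi_T(\Hilb_nS,\det(\beta^{[n]}))$ is a Laurent polynomial in the torus characters (already for $\BP^1$ and $\CO(1)$ it is $s^a+s^b$ with $a\ne b$), and the Verlinde number is its value at the \emph{identity} of the torus, i.e.\ at $a_1=a_2=0$. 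Plugging generic numeric weights into the Lefschetz fixed-point sum therefore computes the character at a generic torus point, not $\chi$; one is forced to expand around $a_1=a_2=0$ and exploit the cancellation of poles among fixed points (this is exactly Remark \ref{rem:polynomials}), which is an intrinsically symbolic computation and kills the ``fast modular arithmetic'' shortcut. Relatedly, your fixed-point sum runs over \emph{tuples} of partitions of total size $n$; for $\BP^2$ and $n$ up to $49$ this is on the order of $10^{10}$ terms, each a product of rational functions in the weights, which is not feasible. The paper's program avoids both problems at once: by the factorization $I^V_{S,V}(w)=\prod_{i=1}^M\Omega^V(\cdots)\big|_{a_1=a_2=0}$ one never enumerates tuples, only single partitions ($\sum_{n<50}p(n)\approx 4.5\times 10^5$ of them) in the one-fixed-point function $\Omega^V(w;0,\ldots,0;t_1,t_2)$ of Proposition \ref{prop:specializations}; taking its logarithm reduces all poles to the simple poles $t_1^{-1},t_2^{-1}$, and only the lowest-order coefficients in $a_1,a_2$ (the universal series $E^V,F^V$, combined with the already-proven $B_3$ via \eqref{GCEF}) are needed to isolate $B_4$ --- there is no need to choose a surface, divide out $B_1,B_2,B_3$, or extract a $K_S^2$-th root.

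The second gap concerns quantifiers: Conjecture \ref{Bconj} is a statement for every $r\ge 0$, so checking it ``modulo $w^{50}$'' at finitely many fixed values of $r$ proves nothing for the remaining $r$ unless you supply the polynomiality argument your parenthetical ``interpolating in $r$'' presupposes. Both sides do have coefficients that are polynomials in $r$ for each fixed power of $w$ (for the geometric side this follows from Corollary \ref{cor:alpha from V} and Remark \ref{univalpha}, for the conjectural side from the explicit binomial expressions), but then you must bound the degrees --- they grow like $2n$ for the coefficient of $w^n$, as the term $3r^{2n}$ in Proposition \ref{prop:Bserieslog} already shows --- and check correspondingly many values of $r$, roughly a hundred at the top order. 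The paper sidesteps this entirely by running the Pari/GP computation with $r$ as a formal variable, so the comparison is an identity of polynomials in $r$ at each order in $w$. Your proposal is repairable on both counts, but as written the central computation would produce the wrong quantity and, even if corrected, would not terminate in realistic time, and the conclusion would cover only the finitely many ranks actually tested.
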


 {\bf Strategy of the proof.}
 By the product formula \eqref{prodform} (or equivalently the cobordism invariance of \cite{ellingsrud2001cobordism}) it is enough to show the result
 for $S$ a toric surface and $\alpha$ the class of a toric vector bundle. Then the localization formula expresses the generating function
 $I_{S,\alpha}(w,z)$ in terms of a "master" partition function $\Omega(w,z_1,\ldots,z_k,q,t)$. Identities of modified Macdonald polynomials lead to a functional equation for $\Omega$ which we call the symmetry. The symmetry together with some regularity
 properties leads to enough constraints on the $G_i(w,z)$, to determine them.

 It is natural to ask for the geometric meaning of the symmetry and regularity properties (see Definition \ref{defn:regularity}). The regularity seems to have the same nature as the following observation,
 which also implies  \eqref{Ispecial}  for $I_{S,\alpha}^C(x)$. Let $X$ be a compact complex manifold of dimension $d$. Let $V$ be a vector bundle of rank $k$, and $L$ a line bundle on $X$. Let
 \[
 f(z) = \chi(X, \Lambda_{-z} V\otimes L).
 \]
By Riemann-Roch one can check that
\[
\eps^{d-k} (1+\eps)^k f\left(\frac{1}{1+\eps}\right)
\]
is a polynomial in $\eps$ of degree $\leq d$. Its constant term is the Chern integral $(-1)^d \int_X c_d(V)$.

The symmetry property for instance manifests itself in the $(w,z)\leftrightarrow (w^{-1}, w z)$ symmetry of the series $G_0(w, z)G_1(w, z)$, $G_3(w, z)$, and $G_4(w, z)$, or equivalently the $u\leftrightarrow v$ symmetry after the substitution \eqref{varchange}. It's geometric meaning remains mysterious to us. One consequence is that under the assumption $\chi(\alpha)=\chi(\CO_S)=0$ we have
\[
\chi\left(\Hilb_n S, \Lambda^m \alpha^{[n]} \otimes \det(\CO_S^{[n]})^{-1}\right) = (-1)^m \chi\left(\Hilb_{m-n} S, \Lambda^m \alpha^{[m-n]} \otimes \det(\CO_S^{[m-n]})^{-1}\right).
\]

%\begin{LG}
%I think we have to discuss notations for the universal series
%You follow in the computations the notation of \cite{marian2017higher}: one has
%universal series $A_1,\ldots,A_5$ for the Segre or Chern series, and $B_1,\ldots,B_4$ for the Verlinde series.
%I do not like this very much: I very much would like to have the $r$ (for the Verlinde) or the $s=\rk(\alpha)$ for the Chern series to be in the notation.
%One could either do like in \cite{GottscheKool}, here all the series have different names:
%The Verlinde series would be
%\[
%I_{S,\alpha}^V(w)=F_r(w)^{\frac{1}{2}\chi(CO_S)} G_r(w)^{\chi(L)} A_r(w)^{LK_S} B_r(w)^{K_S^2},\quad L=\det(\alpha), \ r=\rk(\alpha),
%\]
%and the Chern series would be
%\[
%I_{S,\alpha}^C(z)=V_s(z)^{c_2(\alpha)}W_s(z)^{c_1(\alpha)^2}X_s(z)^{\frac{1}{2}\chi(CO_S)} Y_s(z)^{c_1(\alpha)K_S} Z_s(z)^{K_S^2}, s=\rk(\alpha),
%\]
%maybe you find it a bit heavy to introduce all these notations, we can discuss it.
%The alternative is to use the notation of  \cite{marian2019combinatorics}, which I have done here.
%We will have to see what is the best way to do it, either adjust the introduction, or what is later in the paper

%\end{LG}
\begin{acknowledgements} L.~G.~would like to thank Don Zagier for numerous discussions and advice
over the course of several years, which were very helpful in finding some of the formulas of this paper. In the course
of these discussions he also suggested and helped to  write an efficient Pari/GP program to compute the series $B_3(w)$, $B_4(w)$ with $r$ as variable to high order. This data has many times proved to be useful
 to check ideas about the structure of these power series, and also was used to show Proposition \ref{prop:Bconj}.
 He also thanks Emanuel Carneiro for useful discussions.

 A.~M.~thanks Rahul Pandharipande for the suggestion to try to compute the Chern and Verlinde series using localization and Macdonald polynomials, and for continuous encouragement. He also thanks Dragos Oprea for his minicourse on the Segre-Verlinde correspondence and many useful discussions. A.~M.~is supported by the ERC Consolidator Grant, grant agreement 101001159.
\end{acknowledgements}

\section{Partition functions}
We will use the following ``master'' partition function
\[
\Omega(w;z_1,\ldots,z_k;q,t) = \sum_\lambda \frac{\prod_{i=1}^k\prod_{\square\in \lambda} (1-q^{c(\square)} t^{r(\square)} z_i)}{\prod_{\square\in\lambda}(q^{a(\square)+1}-t^{l(\square)})(q^{a(\square)}-t^{l(\square)+1})} w^{|\lambda|}.
\]
Here for every partition $\lambda$ we consider products over the boxes $\square\in\lambda$ and for each box we denote by $c(\square)$, $r(\square)$, $a(\square)$, $l(\square)$ the column index, the row index, the arm length and the leg length respectively. The column and row indices are zero-based.

 \subsection{Notation}
We will often use the falling and rising factorial notation
\[
a_{(n)}=a (a-1)\cdots (a-n+1),\qquad a^{(n)}=a (a+1)\cdots(a+n-1).
\]
When $f$ is a polynomial or a power series involving a variable $z$ then $f|_{z^n}$ denotes the coefficient in front of $z^n$. On the other hand, when we write $f(z)|_{z=a}$ we mean $f(a)$.

\subsection{Plethysms}
In what follows we will be using \emph{plethystic evaluation}: when $F$ is a symmetric function and $A$ is a formal expression in some variables $x,y,z,\ldots$ then to define $F[A]$ we first express $F$ in terms of the power sum functions
\[
F = f(p_1,p_2,p_3,\ldots),
\]
and then set
\[
F[A] := f(A(x,y,z,\ldots), A(x^2,y^2,z^2,\ldots), A(x^3,y^3,z^3,\ldots), \ldots).
\]
When $A$ involves letters $X$, $Y$ which stand for symmetric function alphabets we agree to treat symbols $X$, and $Y$ as sums $X=x_1+x_2+\cdots$, $Y=y_1+y_2+\cdots$. In this way we have for instance
\[
F[X] = F(x_1,x_2,\cdots),
\]
where on the left we have the plethystic evaluation, and on the right we have the usual evaluation of $F$ as a function on the arguments $x_1, x_2, \ldots$.

Denote by $\pExp$ the \emph{plethystic exponential}, which is the infinite series of symmetric functions
\[
\pExp=\exp\left(\sum_{n=1}^\infty \frac{p_n}{n}\right).
\]
It satisfies
\[
\pExp[A+B]=\pExp[A] \pExp[B],\qquad \pExp[-A]=\frac{1}{\pExp[A]}
\]
for formal expressions $A, B$ when the corresponding infinite sums make sense, and we have
\[
\pExp[x + y + z + \cdots] = \frac{1}{(1-x)(1-y)(1-z)\cdots},
\]
so one may think of $\pExp$ as a notation for product expansions.

\subsection{Macdonald polynomials toolkit}

Our main reference for Macdonald polynomials and relevant results is \cite{garsia1999explicit}. Let $\wt H_\lambda[X;q,t]$ denote the \emph{modified Macdonald polynomial} for a partition $\lambda$. This is a symmetric function in $X=(x_1,x_2,\ldots)$ whose coefficients are polynomials in $q,t$. Let
\[
B_\lambda(q,t) = \sum_{\square\in\lambda} q^{c(\square)} t^{r(\square)},\qquad D_\lambda(q,t) = -1 + (1-q)(1-t) B_\lambda,
\]
\[
T_\lambda(q,t) = q^{n(\lambda')} t^{n(\lambda)} = \prod_{\square\in\lambda} q^{c(\square)} t^{r(\square)},
\]
where $n(\lambda) = \sum_{\square\in\lambda} r(\square) = \sum_i (i-1) \lambda_i$ and $\lambda'$ denotes the conjugate partition. We also abbreviate
\[
N_\lambda(q,t) = \prod_{\square\in\lambda}(q^{a(\square)+1}-t^{l(\square)})(q^{a(\square)}-t^{l(\square)+1}).
\]

We have the \emph{Cauchy identity}
\[
\pExp\left[-\frac{XY}{(1-q)(1-t)}\right] = \sum_\lambda \frac{\wt H_\lambda[X;q,t] \wt H_\lambda[Y;q,t]}{N_\lambda(q,t)},
\]
where $X=(x_1, x_2, \ldots)$ and $Y=(y_1, y_2, \ldots)$ are two sets of variables.
\begin{rem}
	It should be clear that we can also write the left hand side as an infinite product
	\[
	\prod_{k,l=1}^\infty \prod_{i,j=0}^\infty (1-x_k y_l q^i t^j),
	\]
	but the infinite product expansion makes it not so apparent that the coefficients in front of monomials in $x$ and $y$ are rational functions in $q$ and $t$ rather then arbitrary power series.
\end{rem}

We will need the following identity proved in \cite{garsia1999explicit}:
\begin{thm}
	For any partition $\mu$ we have
	\begin{equation}\label{eq:garsia tesler}
	\wt H_\mu[X+1;q,t] = \pExp\left[\frac{X}{(1-q)(1-t)}\right] \sum_{\lambda} (-1)^{|\lambda|} \frac{\wt H_\lambda[X;q,t] \wt H_\lambda[D_\mu(q,t);q,t]}{T_\lambda(q,t) N_\lambda(q,t)}.
	\end{equation}
\end{thm}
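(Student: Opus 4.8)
The plan is to read \eqref{eq:garsia tesler} as the computation of the matrix entries, in the modified Macdonald basis, of an explicit self-adjoint operator, and to extract those entries from the reproducing-kernel structure supplied by the Cauchy identity together with Macdonald's evaluation symmetry. First I would introduce the scalar product $\langle\,\cdot\,,\cdot\,\rangle_*$ for which the Cauchy identity expresses a reproducing kernel, so that $\langle \wt H_\lambda,\wt H_\mu\rangle_* = \delta_{\lambda\mu}N_\lambda(q,t)$ and, writing
\[
K(X,Y):=\pExp\left[-\frac{XY}{(1-q)(1-t)}\right]=\sum_\lambda \frac{\wt H_\lambda[X]\,\wt H_\lambda[Y]}{N_\lambda(q,t)},
\]
one has $\langle F[X],K(X,Y)\rangle_{*,X}=F[Y]$ for every $F$. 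I would then set
\[
\Phi\colon F[X]\longmapsto \pExp\left[-\frac{X}{(1-q)(1-t)}\right]\,F[X+1].
\]
Since the $\wt H_\lambda$ form a basis, \eqref{eq:garsia tesler} is exactly the assertion that, with $c_{\mu\lambda}:=\langle \Phi\wt H_\mu,\wt H_\lambda\rangle_*$, the matrix entries are $c_{\mu\lambda}=(-1)^{|\lambda|}\,\wt H_\lambda[D_\mu]/T_\lambda$ (dividing by $N_\lambda$ recovers the stated coefficient of $\wt H_\lambda[X]$).

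The structural heart of the argument is that $\Phi$ is \emph{self-adjoint} for $\langle\,\cdot\,,\cdot\,\rangle_*$. I would verify this on the spanning family of kernels $K(\cdot,A)$: using $\pExp[-\frac{X}{(1-q)(1-t)}]=K(X,1)$ together with $K(X,A)K(X,B)=K(X,A+B)$ and $K(X+1,A)=K(X,A)K(1,A)$, one finds $\Phi K(\cdot,A)[X]=K(1,A)\,K(X,1+A)$, whence
\[
\langle \Phi K(\cdot,A),K(\cdot,B)\rangle_* = K(1,A)\,K(1+A,B)=K(1,A)\,K(A,B)\,K(1,B),
\]
which is manifestly symmetric in $A\leftrightarrow B$. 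Hence $c_{\mu\lambda}=c_{\lambda\mu}$. This already \emph{explains} the reciprocity hidden in the right-hand side of \eqref{eq:garsia tesler}: the claimed value must be symmetric in $\lambda,\mu$, i.e. the theorem is governed by Macdonald's evaluation symmetry $(-1)^{|\lambda|}\wt H_\lambda[D_\mu]/T_\lambda=(-1)^{|\mu|}\wt H_\mu[D_\lambda]/T_\mu$.

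Next I would pin down the actual value of $c_{\mu\lambda}$, not merely its symmetry. I would anchor the computation at $\lambda=\emptyset$: by self-adjointness $c_{\mu\emptyset}=\langle \Phi\wt H_\emptyset,\wt H_\mu\rangle_*=\langle K(\cdot,1),\wt H_\mu\rangle_*=\wt H_\mu[1]=1$, which matches $(-1)^{0}\wt H_\emptyset[D_\mu]/T_\emptyset=1$; here I use the standard normalizations $\wt H_\lambda[1]=1$ and, more generally, the specialization $\wt H_\lambda[1-u]=\prod_{\square\in\lambda}(1-u\,q^{c(\square)}t^{r(\square)})$. To propagate from this anchor to all pairs $(\mu,\lambda)$ I would invoke the full evaluation symmetry above, proved from the defining triangularity and orthogonality of the $\wt H_\lambda$ (equivalently via Tesler's add-a-box recursion), and reconcile the resulting closed form with the power $T_\lambda=q^{n(\lambda')}t^{n(\lambda)}$ and the sign $(-1)^{|\lambda|}$.

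The main obstacle is precisely this last evaluation. Self-adjointness of $\Phi$ delivers the symmetry of $c_{\mu\lambda}$ essentially for free and reduces the amount of bookkeeping, but identifying the \emph{value} as $(-1)^{|\lambda|}\wt H_\lambda[D_\mu]/T_\lambda$ genuinely requires Macdonald reciprocity for $\wt H_\lambda[D_\mu]$, whose proof rests on the triangular expansion of $\wt H_\lambda$ against the Schur (or monomial) basis together with the norm $N_\lambda$. Tracking the $(q,t)$-prefactors — the exact power $T_\lambda$ and the sign — is where errors are easiest to make, and I would control them by testing the one-row and hook cases, for which $\wt H_\lambda[D_\mu]$, $N_\lambda$ and $T_\lambda$ are all explicit.
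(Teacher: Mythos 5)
The first thing to note is that the paper does not prove this statement at all: it is imported verbatim from \cite{garsia1999explicit}, and the surrounding text even runs the logic in the opposite direction from yours, deriving Macdonald--Koornwinder duality \eqref{eq:macdonald koornwinder} and its limit \eqref{eq:spec mk} as \emph{consequences} of \eqref{eq:garsia tesler}. So your attempt has to stand on its own as a proof, and it does not. Your setup is correct and the one genuinely nice step is sound: with $\langle \wt H_\lambda,\wt H_\mu\rangle_*=\delta_{\lambda\mu}N_\lambda$ the Cauchy kernel is reproducing, the theorem is equivalent to $c_{\mu\lambda}:=\langle\Phi\wt H_\mu,\wt H_\lambda\rangle_*=(-1)^{|\lambda|}\wt H_\lambda[D_\mu]/T_\lambda$, and your kernel computation $\langle\Phi K(\cdot,A),K(\cdot,B)\rangle_*=K(1,A)K(A,B)K(1,B)$ does establish self-adjointness of $\Phi$ (modulo routine remarks about completions), hence $c_{\mu\lambda}=c_{\lambda\mu}$; the anchor $c_{\mu\varnothing}=1$ is also right.

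The gap is the step you call ``propagate from this anchor to all pairs.'' There is no propagation mechanism: a symmetric matrix is not determined by its first row and column, and the constraints you have derived (symmetry of $(c_{\mu\lambda})$, $c_{\mu\varnothing}=1$) are satisfied by infinitely many matrices — for instance one may perturb $c_{\mu\lambda}$ and $c_{\lambda\mu}$ by the same amount for any $\mu,\lambda\neq\varnothing$. Invoking Macdonald reciprocity \eqref{eq:spec mk} does not close this: reciprocity only says that the \emph{claimed} values $(-1)^{|\lambda|}\wt H_\lambda[D_\mu]/T_\lambda$ themselves form a symmetric matrix with first column $1$, i.e.\ that the target is \emph{consistent} with your constraints, not that $c_{\mu\lambda}$ equals it. You can see the circularity at the kernel level: pairing the claimed expansion against $\wt H_\mu[A]/N_\mu$ and summing, the identity you would still need is
\[
K(1,A)\,K(A,B)\,K(1,B)\;=\;\sum_{\mu,\lambda}\frac{\wt H_\mu[A]\,\wt H_\lambda[B]}{N_\mu N_\lambda}\,\frac{(-1)^{|\lambda|}\wt H_\lambda[D_\mu]}{T_\lambda},
\]
which is precisely the theorem itself, and reciprocity merely swaps $\mu\leftrightarrow\lambda$ inside it. So the decisive computation — evaluating $c_{\mu\lambda}=\langle\wt H_\mu[X+1],\wt H_\lambda[X+1]\rangle_*$, equivalently relating the operator $\Phi$ to plethystic evaluation at $D_\mu$ — is never performed, and it is exactly the content of the Garsia--Haiman--Tesler result. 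To close it you would need genuine extra input, e.g.\ a separation lemma showing the evaluations $X=uD_\nu$ distinguish symmetric functions together with an induction on degree, or the Pieri-recursion arguments of \cite{garsia1999explicit}; at that point you are reconstructing their proof rather than bypassing it. A smaller logical point: since this paper obtains \eqref{eq:spec mk} \emph{from} \eqref{eq:garsia tesler}, any use of reciprocity in your argument must be sourced independently (Macdonald's book or \cite{garsia1999explicit}) to avoid circularity.
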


This identity can be thought of as a strengthening of the \emph{Macdonald-Koornwinder duality}, which says that for any partitions $\mu$, $\nu$ we have the following identity of rational functions in $u, q, t$:
\begin{equation}\label{eq:macdonald koornwinder}
	\frac{H_{\nu}[1+u D_\mu(q,t); q, t]}{\prod_{\square\in\nu} (1- u q^{c(\square)}t^{r(\square)})} = 	\frac{H_{\mu}[1+u D_\nu(q,t); q, t]}{\prod_{\square\in\mu} (1- u q^{c(\square)}t^{r(\square)})}.
\end{equation}

Indeed, setting $X=u D_\nu(q,t)$ in \eqref{eq:garsia tesler}, we obtain
\[
\wt H_\mu[1+u D_\nu(q,t); q,t] = \pExp\left[\frac{-u}{(1-q)(1-t)} + u B_\nu\right] \sum_{\lambda} (-u)^{|\lambda|} \frac{\wt H_\lambda[D_\nu(q,t);q,t] \wt H_\lambda[D_\mu(q,t);q,t]}{T_\lambda(q,t) N_\lambda(q,t)},
\]
so the right hand side of \eqref{eq:macdonald koornwinder} equals
\[
\pExp\left[\frac{-u}{(1-q)(1-t)} + u B_\nu + u B_\mu\right] \sum_{\lambda} (-u)^{|\lambda|} \frac{\wt H_\lambda[D_\nu(q,t);q,t] \wt H_\lambda[D_\mu(q,t);q,t]}{T_\lambda(q,t) N_\lambda(q,t)},
\]
which is manifestly symmetric in $\mu, \nu$.

For the empty partition $\nu=\varnothing$ we have $B_\varnothing=0$, $D_\varnothing=-1$, $\wt H_\varnothing=1$, and so \eqref{eq:macdonald koornwinder} implies
\begin{equation}\label{eq:macdonald at u}
	H_\mu[1-u; q,t] = \prod_{\square\in\mu} (1- u q^{c(\square)}t^{r(\square)}).
\end{equation}

Both sides of \eqref{eq:macdonald koornwinder} are rational functions of $u$. In the limit $u\to\infty$ we obtain
\begin{equation}\label{eq:spec mk}
	(-1)^{|\nu|} \frac{H_{\nu}[D_\mu; q, t]}{T_\nu(q,t)} = 	(-1)^{|\mu|} \frac{H_{\mu}[D_\nu; q, t]}{T_\mu(q,t)}.
\end{equation}

\subsection{Functional equation}
Using these identities we can now prove the following result about the partition function $\Omega$ (see \cite{mellit2016plethystic} for a more general identity and an application to a conjecture of Hausel-Mereb-Wong \cite{hausel2019arithmetic}):
\begin{thm} We have the following identity of power series in $w,z_1,\ldots,z_k$ with coefficients in $\BQ(q,t)$:
\begin{equation}\label{eq:identity}
	\Omega(w;z_1,\ldots,z_k;q,t) = \pExp\left[-\frac{w+\sum z_i}{(1-q)(1-t)}\right]
\end{equation}
\[
\cdot \sum_\mu (-1)^{|\mu|} \frac{\tilde H_\mu[w+1;q,t] \tilde H_\mu[z_1+\cdots+z_k;q,t] T_\mu(q,t)}{N_\mu(q,t)}.
\]
\end{thm}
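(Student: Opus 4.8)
The plan is to transform the left-hand side $\Omega$ into the right-hand side by a chain of plethystic manipulations, working throughout as an identity of formal power series in $w,z_1,\ldots,z_k$ with coefficients in $\BQ(q,t)$; I would fix the joint grading by $w$-degree (i.e.\ $|\lambda|$) and total $z$-degree at the outset, so that each graded piece is a finite sum and all rearrangements below are legitimate. Writing $b_\square = q^{c(\square)}t^{r(\square)}$, the first and most essential step is to collapse the product numerator. By \eqref{eq:macdonald at u} we have $\prod_{\square\in\lambda}(1-z_i b_\square)=\wt H_\lambda[1-z_i;q,t]$, and since $\pExp[-b_\square z_i]=1-z_i b_\square$, multiplicativity of $\pExp$ gives
\[
\prod_{i=1}^k\prod_{\square\in\lambda}(1-z_i b_\square)=\pExp\!\left[-Z\,B_\lambda(q,t)\right],\qquad Z:=z_1+\cdots+z_k.
\]
Using $B_\lambda=(1+D_\lambda)/\big((1-q)(1-t)\big)$ from the definition of $D_\lambda$, the $\lambda$-independent factor $\pExp[-Z/((1-q)(1-t))]$ splits off, leaving
\[
\Omega=\pExp\!\left[-\frac{Z}{(1-q)(1-t)}\right]\sum_\lambda \frac{w^{|\lambda|}}{N_\lambda}\,\pExp\!\left[-\frac{Z\,D_\lambda}{(1-q)(1-t)}\right].
\]

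Next I would apply the Cauchy identity in reverse, with $X=Z$ and $Y=D_\lambda$, to expand $\pExp[-Z D_\lambda/((1-q)(1-t))]=\sum_\mu \wt H_\mu[Z]\,\wt H_\mu[D_\lambda]/N_\mu$, and then interchange the $\lambda$- and $\mu$-summations (valid because $\wt H_\mu[Z]$ is homogeneous of degree $|\mu|$ in the $z_i$, so each fixed $(w,z)$-degree receives only finitely many contributions). This yields
\[
\Omega=\pExp\!\left[-\frac{Z}{(1-q)(1-t)}\right]\sum_\mu \frac{\wt H_\mu[Z]}{N_\mu}\left(\sum_\lambda \frac{w^{|\lambda|}\,\wt H_\mu[D_\lambda]}{N_\lambda}\right),
\]
so everything reduces to evaluating the inner $\lambda$-sum in closed form.

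To do this I would use the duality \eqref{eq:spec mk} to rewrite $\wt H_\mu[D_\lambda]=(-1)^{|\lambda|+|\mu|}\,(T_\mu/T_\lambda)\,\wt H_\lambda[D_\mu]$, turning the inner sum into $(-1)^{|\mu|}T_\mu\sum_\lambda (-w)^{|\lambda|}\wt H_\lambda[D_\mu]/(T_\lambda N_\lambda)$. The point is that this is precisely a single-variable specialization of the Garsia--Tesler identity \eqref{eq:garsia tesler}: since $\wt H_\lambda$ is homogeneous of degree $|\lambda|$ and $\wt H_\lambda[1]=1$ (the $u=0$ case of \eqref{eq:macdonald at u}), we have $\wt H_\lambda[w]=w^{|\lambda|}$, and setting $X=w$ in \eqref{eq:garsia tesler} gives
\[
\sum_\lambda \frac{(-w)^{|\lambda|}\wt H_\lambda[D_\mu]}{T_\lambda N_\lambda}=\pExp\!\left[-\frac{w}{(1-q)(1-t)}\right]\wt H_\mu[w+1].
\]
Substituting back and merging the two plethystic exponentials into $\pExp[-(w+Z)/((1-q)(1-t))]$ produces exactly the claimed formula, with $z_1+\cdots+z_k=Z$.

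Each individual move is routine once the toolkit of the previous subsections is in hand, so the argument is short. The one genuinely essential idea — and the step I expect to be the crux — is the opening recognition that the product numerator telescopes to $\pExp[-Z B_\lambda]$ and that peeling off the $\lambda$-independent piece leaves a $D_\lambda$-factor on which both the Cauchy identity and the single-variable Garsia--Tesler identity can act; after that the proof is bookkeeping. The only points requiring a word of care are the legitimacy of the summation interchange (handled by the joint grading) and the single-variable evaluation $\wt H_\lambda[w]=w^{|\lambda|}$, which I would justify explicitly from homogeneity together with \eqref{eq:macdonald at u}.
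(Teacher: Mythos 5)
Your proof is correct and follows essentially the same route as the paper's: collapse the numerator to $\pExp[-Z B_\lambda]$, split off the $\lambda$-independent factor via $B_\lambda=(1+D_\lambda)/((1-q)(1-t))$, expand with the Cauchy identity at $Y=D_\lambda$, flip $\wt H_\mu[D_\lambda]$ to $\wt H_\lambda[D_\mu]$ by the duality \eqref{eq:spec mk}, and finish with the Garsia--Tesler identity \eqref{eq:garsia tesler} specialized at $X=w$ using $\wt H_\lambda[w]=w^{|\lambda|}$. The only cosmetic difference is direction of presentation (you evaluate the inner $\lambda$-sum and substitute forward, while the paper reduces the statement by matching coefficients of $\wt H_\mu[Z]$), plus your extra care about the summation interchange and the justification of $\wt H_\lambda[w]=w^{|\lambda|}$, both of which the paper leaves implicit.
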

\begin{proof}
	Let $Z=z_1+\cdots+z_k$. We write the term in the definition of $\Omega$ as follows:
	\[
	\prod_{i=1}^k\prod_{\square\in \lambda} (1-q^{c(\square)} t^{r(\square)} z_i) = \pExp[-Z B_\lambda(q,t)] =  \pExp\left[-\frac{Z(D_\lambda(q,t)+1)}{(1-q)(1-t)}\right]
	 \]
	 \[
	  = \pExp\left[-\frac{Z}{(1-q)(1-t)}\right]\sum_{\mu} \frac{\wt H_\mu[Z;q,t] \wt H_\mu[D_\lambda(q,t);q,t]}{N_\mu(q,t)}.
	\]
	Collecting the coefficients in front of the terms $\wt H_\mu[Z;q,t]$, the statement is reduced to showing
	\[
	\sum_{\lambda} \frac{\wt H_\mu[D_\lambda(q,t);q,t]}{N_\lambda(q,t)} w^{|\lambda|} \\
	=  (-1)^{|\mu|} \pExp\left[-\frac{w}{(1-q)(1-t)}\right] \tilde H_\mu[w+1;q,t] T_\mu(q,t).
	\]
	Applying \eqref{eq:spec mk}, this reduces to
	\[
	\sum_{\lambda} \frac{\wt H_\lambda[D_\mu(q,t);q,t]}{T_\lambda(q,t) N_\lambda(q,t)} (-w)^{|\lambda|} \\
	= \pExp\left[-\frac{w}{(1-q)(1-t)}\right] \tilde H_\mu[w+1;q,t].
	\]
    Noting that $\wt H_\mu[w;q,t]=w^{|\mu|}$ we recognize the specialization of \eqref{eq:garsia tesler} to $X=w$.
\end{proof}

This implies a functional equation for $\Omega$.
\begin{cor}
	Let
	\[
	\wt \Omega(w; z_1,\ldots,z_k; q,t) = \pExp\left[\frac{w+\sum z_i}{(1-q)(1-t)}\right] \Omega(w; z_1,\ldots,z_k; q,t).
	\]
	This is a power series in $z_1,\ldots,z_k$ whose coefficients are \emph{polynomials} in $w$, and we have
	\[
	\wt \Omega(w; z_1,\ldots,z_k; q,t) = \wt \Omega(w^{-1}; w z_1,\ldots, w z_k; q,t).
	\]
\end{cor}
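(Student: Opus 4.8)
The plan is to feed the explicit formula for $\Omega$ from the preceding theorem directly into the definition of $\wt\Omega$. Since $\wt\Omega = \pExp\!\left[\tfrac{w+\sum z_i}{(1-q)(1-t)}\right]\Omega$ and the theorem writes $\Omega$ as $\pExp\!\left[-\tfrac{w+\sum z_i}{(1-q)(1-t)}\right]$ times a sum over $\mu$, the two plethystic exponentials cancel and I obtain the closed form
\[
\wt\Omega(w; z_1,\ldots,z_k; q,t) = \sum_\mu (-1)^{|\mu|}\frac{\wt H_\mu[w+1;q,t]\,\wt H_\mu[z_1+\cdots+z_k;q,t]\,T_\mu(q,t)}{N_\mu(q,t)}.
\]
Both assertions will be read off from this expression using only two elementary features of the modified Macdonald polynomials: that $\wt H_\mu[X]$ is homogeneous of degree $|\mu|$ in the alphabet $X$, and that its plethystic evaluation at a two-letter alphabet is a polynomial.

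For the polynomiality claim, I observe that $\wt H_\mu[w+1]$ is the evaluation of $\wt H_\mu$ at the alphabet $w+1$, that is, the symmetric function $\wt H_\mu$ regarded as a polynomial in two variables and specialized at $x_1=w$, $x_2=1$; being homogeneous of degree $|\mu|$, it is a polynomial in $w$ of degree at most $|\mu|$ with coefficients in $\BQ(q,t)$. Since $\wt H_\mu[z_1+\cdots+z_k]$ is homogeneous of degree $|\mu|$ in the $z_i$ and $T_\mu/N_\mu\in\BQ(q,t)$, collecting the sum by total degree $n$ in the $z_i$ shows that the coefficient of each monomial $z^\alpha$ with $|\alpha|=n$ is a finite sum over the $\mu$ with $|\mu|=n$ of such polynomials, hence a polynomial in $w$.

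The symmetry is where homogeneity does the real work, applied to two different alphabets. Performing $(w,z_i)\mapsto(w^{-1},wz_i)$ in the closed form turns the $\mu$-term into $(-1)^{|\mu|}\wt H_\mu[w^{-1}+1]\,\wt H_\mu[w(z_1+\cdots+z_k)]\,T_\mu/N_\mu$. Homogeneity of degree $|\mu|$ gives $\wt H_\mu[w(z_1+\cdots+z_k)]=w^{|\mu|}\wt H_\mu[z_1+\cdots+z_k]$, and the same homogeneity applied to the two-letter alphabet gives the palindromic identity $\wt H_\mu[w+1]=\wt H_\mu[w(w^{-1}+1)]=w^{|\mu|}\wt H_\mu[w^{-1}+1]$. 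Substituting the latter, the factor $w^{|\mu|}\wt H_\mu[w^{-1}+1]$ becomes exactly $\wt H_\mu[w+1]$, so the substitution carries each $\mu$-term to the corresponding term of $\wt\Omega(w;z_1,\ldots,z_k)$, proving the identity term by term.

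The step to be most careful about is the interpretation of $\wt H_\mu[w+1]$: it must be read as a genuine plethystic evaluation at a two-letter alphabet (equivalently, as a finite-variable specialization of the symmetric function), \emph{not} as the value at $u=-w$ of the content product $\prod_{\square}(1-u q^{c(\square)}t^{r(\square)})$ from \eqref{eq:macdonald at u}. These two expressions agree through linear order in $w$ but differ at higher order, and only the former -- which enjoys the clean homogeneity $\wt H_\mu[cX]=c^{|\mu|}\wt H_\mu[X]$ -- produces the palindromy that the symmetry demands. Once this interpretation is fixed, no genuine computation remains: the corollary reduces entirely to the degree-$|\mu|$ homogeneity of $\wt H_\mu$, used once on the $z$ side and once on the $w$ side.
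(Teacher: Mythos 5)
Your proof is correct and is essentially the argument the paper intends: the corollary is read off directly from the closed form in the preceding theorem, with polynomiality coming from the two-letter evaluation $\wt H_\mu[w+1;q,t]$ and the symmetry coming from the degree-$|\mu|$ homogeneity $\wt H_\mu[wX]=w^{|\mu|}\wt H_\mu[X]$ applied to both the $z$-alphabet and to $w+1=w\cdot(w^{-1}+1)$. Your cautionary remark is also correct: $\wt H_\mu[w+1]$ must be the genuine plethystic (two-letter) evaluation, not the content product $\prod_{\square\in\mu}(1+wq^{c(\square)}t^{r(\square)})$ obtained from \eqref{eq:macdonald at u} at $u=-w$, and only the former is palindromic.
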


\subsection{The logarithm}
Let
\[
H(w;z_1,\ldots,z_k;q,t) = \log \Omega(w;z_1,\ldots,z_k;q,t).
\]
In \cite{mellit2018integrality} it was shown that series of the form
\[
\sum_\lambda \frac{\wt H_\lambda[X;q,t] \wt H_\lambda[Y;q,t] \wt H_\lambda[Z;q,t] \cdots}{N_\lambda},
\]
with arbitrary number of Macdonald polynomials in the numerator, have the property that they can be written as
\[
\pExp\left[\frac{1}{(1-q)(1-t)} (\cdots)\right],
\]
where $(\cdots)$ is a series in the variables from $X$, $Y$, $Z$, \dots whose coefficients are \emph{polynomials} in $q$, $t$. Specializing $X=w$, $Y=1-z_1$, $Z=1-z_2$ and so on and applying \eqref{eq:macdonald at u} we deduce that $\Omega$ can be written in this form. In particular, from the definition of $\pExp$ we deduce the following:
\begin{prop}
	All coefficients of
	\[
	(1-q)(1-t) H(w;z_1,\ldots,z_k;q,t),
	\]
	as a power series in $w, z_1, \ldots, z_k$ are regular in a neighborhood of $q=t=1$.
\end{prop}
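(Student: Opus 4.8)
The plan is to put $\Omega$ into the plethystic form produced by the integrality theorem of \cite{mellit2018integrality}, and then to read off the behavior of $H=\log\Omega$ near $q=t=1$ by expanding the logarithm into a sum over plethystic power sums. First I would recognize the summand of $\Omega$ as a product of modified Macdonald polynomials divided by $N_\lambda$. By \eqref{eq:macdonald at u} with $u=z_i$ we have $\wt H_\lambda[1-z_i;q,t]=\prod_{\square\in\lambda}(1-q^{c(\square)}t^{r(\square)}z_i)$, and since $\wt H_\lambda[w;q,t]=w^{|\lambda|}$ the factor $w^{|\lambda|}$ is itself such a polynomial. Hence
\[
\Omega(w;z_1,\ldots,z_k;q,t)=\sum_\lambda \frac{\wt H_\lambda[w;q,t]\,\prod_{i=1}^k\wt H_\lambda[1-z_i;q,t]}{N_\lambda(q,t)},
\]
which is exactly the shape treated in \cite{mellit2018integrality}, with alphabets $X=w$, $Y=1-z_1$, and so on. The cited result then gives
\[
\Omega=\pExp\!\left[\frac{G}{(1-q)(1-t)}\right],
\]
where $G=G(w,z_1,\ldots,z_k;q,t)$ is a power series in $w,z_1,\ldots,z_k$ whose coefficients are \emph{polynomials} in $q,t$; it has no constant term because $\Omega=1$ when $w=z_1=\cdots=z_k=0$.

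Next I would use $\log\pExp[A]=\sum_{n\ge1}\frac1n\,p_n[A]$, where $p_n[\,\cdot\,]$ denotes plethystic substitution by the power sum $p_n$, i.e. the ring homomorphism sending each variable to its $n$-th power. Applying this to $A=\frac{G}{(1-q)(1-t)}$, so that $q\mapsto q^n$, $t\mapsto t^n$, $w\mapsto w^n$, $z_i\mapsto z_i^n$, yields
\[
H=\log\Omega=\sum_{n=1}^\infty\frac1n\,\frac{G(w^n,z_1^n,\ldots,z_k^n;q^n,t^n)}{(1-q^n)(1-t^n)},
\]
and therefore
\[
(1-q)(1-t)\,H=\sum_{n=1}^\infty\frac1n\,\frac{(1-q)(1-t)}{(1-q^n)(1-t^n)}\,G(w^n,z_1^n,\ldots,z_k^n;q^n,t^n).
\]

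Finally I would fix a monomial $w^{a_0}z_1^{a_1}\cdots z_k^{a_k}$ and extract its coefficient. Since $G$ has no constant term, $G(w^n,\ldots;q^n,t^n)$ only contributes monomials all of whose exponents are divisible by $n$, so only the finitely many $n$ dividing $\gcd(a_0,\ldots,a_k)$ contribute. The coefficient is thus a \emph{finite} $\BQ$-linear combination of terms of the form $\frac{(1-q)(1-t)}{(1-q^n)(1-t^n)}$ times a polynomial in $q,t$. Writing $\frac{1-q}{1-q^n}=\frac{1}{1+q+\cdots+q^{n-1}}$, which equals $\frac1n$ at $q=1$, shows that each such prefactor is regular at $q=t=1$; hence so is the entire coefficient.

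The only genuinely nontrivial ingredient is the plethystic factorization of \cite{mellit2018integrality}; once it is invoked, everything after it is bookkeeping. The step that requires the most care is the finiteness argument for the logarithm: one must check that for each monomial in $w,z_1,\ldots,z_k$ only finitely many $n$ contribute, so that the a priori singular rational prefactors $\frac{(1-q)(1-t)}{(1-q^n)(1-t^n)}$ neither accumulate nor combine into a pole, and each remains individually regular at $q=t=1$.
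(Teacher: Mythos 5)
Your proof is correct and takes essentially the same route as the paper: you recognize $\Omega$ as the specialization $X=w$, $Y=1-z_1,\ldots$ of the Macdonald series treated in \cite{mellit2018integrality} (via \eqref{eq:macdonald at u} and $\wt H_\lambda[w;q,t]=w^{|\lambda|}$), obtaining $\Omega=\pExp\left[G/\bigl((1-q)(1-t)\bigr)\right]$ with $G$ having coefficients polynomial in $q,t$, exactly as the paper does. The only difference is that the paper compresses the last step into ``from the definition of $\pExp$ we deduce,'' while you spell out that bookkeeping (the expansion $\log\pExp$, the finiteness of contributing $n$ for each monomial, and the regularity of $\frac{1-q}{1-q^n}$ at $q=1$), which is precisely the intended deduction.
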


Thus we can expand
\begin{equation} \label{eq:log expansion}
H(w;z_1,\ldots,z_k;e^{t_1},e^{t_2}) = \sum_{d_1,d_2\geq -1} H_{d_1,d_2}(w;z_1,\ldots,z_k) t_1^{d_1} t_2^{d_2}.
\end{equation}
for some power series $H_{d_1, d_2}(w,z_1,\ldots,z_k)$.

Next we expand
\[
\log \pExp\left[\frac{w+\sum z_i}{(1-q)(1-t)}\right] = \sum_{n=1}^\infty \frac{w^n+\sum_{i=1}^k z_i^n}{n(1-q^n)(1-t^n)}
\]
and use
\[
\frac{1}{e^{tn}-1} = \sum_{d=-1}^\infty \frac{n^d B_{d+1}}{(d+1)!} t^d,
\]
where $B_0, B_1, B_2, \ldots$ are the Bernoulli numbers $1, -1/2, 1/6,\ldots$, to deduce a functional equation for $H_{d_1,d_2}$.
\begin{thm}\label{thm:symmetry}
	For each $d_1,d_2\geq -1$ let
	\[
	\wt H_{d_1, d_2}(w; z_1,\ldots,z_k) = H_{d_1, d_2}(w; z_1,\ldots,z_k) + \frac{B_{d_1+1} B_{d_2+1}}{(d_1+1)!(d_2+1)!} \left(\Li_{1-d_1-d_2}(w) + \sum_{i=1}^k \Li_{1-d_1-d_2}(z_i)\right),
	\]
	where $\Li_d(z)=\sum_{n=1}^\infty \frac{z^n}{n^d}$ is the polylogarithm function. Then we have
	\[
	\wt H_{d_1, d_2}(w; z_1,\ldots,z_k) = \wt H_{d_1,d_2}(w^{-1}; w z_1,\ldots,w z_k).
	\]
\end{thm}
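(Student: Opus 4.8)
The plan is to deduce the statement directly from the functional equation for $\wt\Omega$ established in the Corollary, by taking logarithms and extracting the coefficient of $t_1^{d_1}t_2^{d_2}$ under the substitution $q=e^{t_1}$, $t=e^{t_2}$. Since taking $\log$ is compatible with the substitution $(w;z_1,\ldots,z_k)\mapsto(w^{-1};wz_1,\ldots,wz_k)$, the identity $\wt\Omega(w;z_1,\ldots,z_k;q,t)=\wt\Omega(w^{-1};wz_1,\ldots,wz_k;q,t)$ passes to $\log\wt\Omega$ verbatim, giving $\log\wt\Omega(w;z_1,\ldots,z_k;q,t)=\log\wt\Omega(w^{-1};wz_1,\ldots,wz_k;q,t)$. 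The whole proof then reduces to showing that $\wt H_{d_1,d_2}(w;z_1,\ldots,z_k)$ is precisely the coefficient of $t_1^{d_1}t_2^{d_2}$ in $\log\wt\Omega$, after which the asserted symmetry is just the coefficientwise form of this identity.

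To identify that coefficient, I would split $\log\wt\Omega=\log\pExp\big[\tfrac{w+\sum z_i}{(1-q)(1-t)}\big]+H$ according to the definition of $\wt\Omega$. The second summand contributes exactly $H_{d_1,d_2}(w;z_1,\ldots,z_k)$ by the expansion \eqref{eq:log expansion}. For the first summand I would use the power-sum form $\log\pExp\big[\tfrac{w+\sum z_i}{(1-q)(1-t)}\big]=\sum_{n\ge1}\tfrac{w^n+\sum_i z_i^n}{n(1-q^n)(1-t^n)}$ together with the Bernoulli expansion of $\tfrac{1}{e^{\tau n}-1}$. Since $\tfrac{1}{1-q^n}=-\tfrac{1}{e^{t_1 n}-1}$ and likewise for $t$, the two sign changes cancel, and the coefficient of $t_1^{d_1}t_2^{d_2}$ becomes $\tfrac{B_{d_1+1}B_{d_2+1}}{(d_1+1)!(d_2+1)!}\sum_{n\ge1}\tfrac{w^n+\sum_i z_i^n}{n}\,n^{d_1+d_2}$. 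Recognizing $\sum_{n\ge1}\tfrac{w^n}{n}n^{d_1+d_2}=\Li_{1-d_1-d_2}(w)$, and similarly for each $z_i$, this is exactly the correction term added to $H_{d_1,d_2}$ in the definition of $\wt H_{d_1,d_2}$. Hence the coefficient of $t_1^{d_1}t_2^{d_2}$ in $\log\wt\Omega$ equals $\wt H_{d_1,d_2}$, as desired.

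It then remains to justify that the $(t_1,t_2)$-expansion is legitimate and commutes with the substitution. The regularity Proposition guarantees that $(1-q)(1-t)H$ is regular near $q=t=1$, so $H$ admits a Laurent expansion in $t_1,t_2$ whose lowest degrees in each variable are $-1$, matching \eqref{eq:log expansion}; the $\pExp$ term has the same structure by the explicit computation above. Because the substitution $(w;z_1,\ldots,z_k)\mapsto(w^{-1};wz_1,\ldots,wz_k)$ leaves $q$ and $t$, hence $t_1$ and $t_2$, fixed, it acts only on the coefficients of the $(t_1,t_2)$-expansion, so extracting the coefficient of $t_1^{d_1}t_2^{d_2}$ from both sides of the functional equation for $\log\wt\Omega$ yields $\wt H_{d_1,d_2}(w;z_1,\ldots,z_k)=\wt H_{d_1,d_2}(w^{-1};wz_1,\ldots,wz_k)$.

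The one point requiring care, and the main obstacle, is the precise meaning of $w\mapsto w^{-1}$. I would work throughout, as in the Corollary, in the ring of power series in $z_1,\ldots,z_k$ whose coefficients are (Laurent) polynomials in $w$, where the combined substitution $w\mapsto w^{-1}$, $z_i\mapsto wz_i$ is well defined monomial by monomial and preserves the $z$-adic filtration; one must check that passing to $\log$ (using that $\wt\Omega$ has $z$-constant term a power series in $w$ starting with $1$) and then to the $(t_1,t_2)$-expansion respects this structure, so that the symmetry is inherited order by order. A related subtlety is that for $d_1+d_2\ge2$ the index $1-d_1-d_2$ of $\Li_{1-d_1-d_2}$ is non-positive, so $\Li_{1-d_1-d_2}(w)=\sum_{n\ge1}n^{d_1+d_2-1}w^n$ must be read as a formal power series (equivalently a rational function of $w$); with this reading the matching of correction terms above is an identity of formal series and the argument goes through unchanged.
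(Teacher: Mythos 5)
Your proof is correct and is essentially the paper's own argument, which the text only sketches: take the logarithm of the functional equation for $\wt\Omega$, split $\log\wt\Omega=\log\pExp\left[\tfrac{w+\sum_i z_i}{(1-q)(1-t)}\right]+H$, and identify the coefficient of $t_1^{d_1}t_2^{d_2}$ (via the Bernoulli expansion of $\tfrac{1}{e^{\tau n}-1}$, with the two sign changes cancelling) as exactly $\wt H_{d_1,d_2}$. Your closing remarks on the meaning of $w\mapsto w^{-1}$ and of the negative-index polylogarithms just make explicit the interpretation (coefficients of $z$-monomials are palindromic polynomials in $w$) that the paper states immediately after the theorem.
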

In other words, expanding $\wt H_{d_1,d_2}$ as a power series in $z_1,\ldots,z_k$ each coefficient is a \emph{palindromic} polynomial in $w$.

\section{Localization formulas}

\subsection{Integrals over the surface}
Assume $S$ is a smooth compact toric surface. Let $T=(\BC^*)^2$ which naturally acts on $S$. We denote by $\CT_S$ the tangent bundle of $S$. Let $V$ be a $T$-equivariant bundle on $S$. Denote $H^*_T(\text{point})=\BC[a_1,a_2]$. Suppose there are $M$ fixed points $p_1,\ldots,p_M$. Denote the weights of the tangent space at the $i$-th fixed point by $t_1^{(i)}(a_1,a_2),t_2^{(i)}(a_1,a_2)$ and the weights of $V$ by $v_1^{(i)}(a_1,a_2),\ldots,v_k^{(i)}(a_1,a_2)$. Each weight is a $\BZ$-linear combination of $a_1,a_2$. The fundamental class of $S$ in the localized equivariant homology is given by
\[
[S] = \sum_{i=1}^M \frac{1}{t_1^{(i)} t_2^{(i)}} [p_i].
\]
Denote by $\pi:S\to \text{point}$ the natural map. By degree reasons the following sums vanish
\begin{equation}\label{eq:vanishing1}
0=\pi_*[S] = \sum_{i=1}^M \frac{1}{t_1^{(i)} t_2^{(i)}},
\end{equation}
\begin{equation}\label{eq:vanishing2}
0=\pi_*([S]\cap c_1(\CT_S)) = \sum_{i=1}^M \frac{t_1^{(i)}+t_2^{(i)}}{t_1^{(i)} t_2^{(i)}},
\end{equation}
\begin{equation}\label{eq:vanishing3}
0=\pi_*([S]\cap c_1(V)) = \sum_{i=1}^M \frac{v_1^{(i)}+\cdots+v_k^{(i)}}{t_1^{(i)} t_2^{(i)}},
\end{equation}
and numerical invariants of the pair $S,V$ are given as follows:
\begin{equation}\label{eq:localization1}
\int_S c_2(V) = \pi_*([S]\cap c_2(V)) = \sum_{i=1}^M \frac{e_2(v_1^{(i)},\ldots,v_k^{(i)})}{t_1^{(i)} t_2^{(i)}},
\end{equation}
\begin{equation}\label{eq:localization2}
\int_S c_1(V)^2 = \pi_*([S]\cap c_1(V)^2) = \sum_{i=1}^M \frac{(v_1^{(i)}+\cdots+v_k^{(i)})^2}{t_1^{(i)} t_2^{(i)}},
\end{equation}
\begin{equation}\label{eq:localization3}
\chi(S) = \pi_*([S]\cap c_2(\CT_S)) = \sum_{i=1}^M 1 = M,
\end{equation}
\begin{equation}\label{eq:localization4}
\int_S c_1(V) c_1(\CT_S) = \pi_*([S]\cap c_1(\CT_S) c_1(V)) = \sum_{i=1}^M \frac{(t_1^{(i)}+t_2^{(i)})(v_1^{(i)}+\cdots+v_k^{(i)})}{t_1^{(i)} t_2^{(i)}},
\end{equation}
\begin{equation}\label{eq:localization5}
\int_S c_1(\CT_S)^2 = \pi_*([S]\cap c_1(\CT_S)^2) = \sum_{i=1}^M \frac{(t_1^{(i)}+t_2^{(i)})^2}{t_1^{(i)} t_2^{(i)}},
\end{equation}
where $e_2$ denotes the second elementary symmetric function. In all these formulas the right hand side does not depend on $a_1$ and $a_2$.
\begin{rem}\label{rem:numbers}
	The right hand sides of \eqref{eq:localization1}--\eqref{eq:localization5} are apriori rational functions in $a_1, a_2$, but the identities imply that these functions are constant.
\end{rem}

\subsection{Integrals over the Hilbert schemes}\label{ssec:integrals}
The fixed points of $\Hilb_{*} S = \bigcup_{n=0}^\infty \Hilb_{n} S$ correspond to $M$-tuples of partitions $\lambda^{(1)},\ldots,\lambda^{(M)}$. The weights of the tangent space are given by
\[
\bigcup_{i=1}^M \bigcup_{\square\in\lambda^{(i)}} \{(a(\square)+1) t_1^{(i)} - l(\square) t_2^{(i)},\; (l(\square)+1) t_2^{(i)} - a(\square) t_1^{(i)}\}.
\]
The bundle $V$ induces a bundle $V^{[n]}$ on each $\Hilb_n S$ whose weights at a fixed point are given by
\[
\bigcup_{i=1}^M \bigcup_{\square\in\lambda^{(i)}} \{v_j^{(i)} - t_1^{(i)} c(\square) - t_2^{(i)} r(\square)\;|\; j=1,\ldots, k\}.
\]

We can compute various integrals over $\Hilb_{*}$ as in \cite{marian2017higher}.
By definition
\[
\Omega(w;z_1,\ldots,z_k;e^{-t_1},e^{-t_2})= \sum_\lambda \frac{e^{\sum_{\square\in\lambda}a(\square)t_1+l(\square)t_2} \prod_{i=1}^k\prod_{\square\in \lambda} (1-e^{-c(\square)t_1 -r(\square)t_2} z_i)}{\prod_{\square\in\lambda}(1-e^{-(a(\square)+1)t_1+l(\square)t_2})(1-e^{a(\square)t_1-(l(\square)+1)t_2})} (-w)^{|\lambda|}.
\]
If $\lambda=(\lambda_0,\ldots,\lambda_s)$ is a partition and $\lambda'=(\lambda'_0,\ldots,\lambda'_t)$
the dual partition, then for $\square=(n,m)\in \lambda$ also $\square_1=(n,\lambda_n-m-1)\in \lambda$, and similarly
$\square_2=(\lambda'_m-n-1,m)\in \lambda$, with $a(\square)=c(\square_1)$ and $l(\square)=r(\square_2)$, therefore
\[
\sum_{\square\in\lambda}a(\square)=\sum_{\square\in\lambda}c(\square), \quad \sum_{\square\in\lambda}l(\square)=\sum_{\square\in\lambda}r(\square).
\]
Therefore we  get
\begin{align*}
&I_{S,V}(w,z)=\sum_{n=0}^\infty (-w)^n \chi(\Lambda_{-z} V^{[n]}\otimes \det(\CO_S^{[n]})^{-1})=\Bigg(\sum_{\lambda^{(1)},\cdots,\lambda^{(M)}} \prod_{i=1}^M (-w)^{|\lambda^{(i)}|}
\\&
\cdot\frac{e^{\sum_{\square\in\lambda^{(i)}}c(\square)t^{(i)}_1+r(\square)t^{(i)}_2} \prod_{j=1}^k\prod_{\square\in \lambda^{(i)}} (1-e^{-c(\square)t^{(i)}_1 -r(\square)t^{(i)}_2}e^{v^{(i)}_j} z)}{\prod_{\square\in\lambda^{(i)}}(1-e^{-(a(\square)+1)t^{(i)}_1+l(\square)t^{(i)}_2})(1-e^{a(\square)t^{(i)}_1-(l(\square)+1)t^{(i)}_2})}\Bigg)\Bigg|_{a_1=a_2=0} \\
&=\Bigg(\prod_{i=1}^M\Omega(w;ze^{v^{(i)}_1},\ldots,ze^{v^{(i)}_k};e^{-t^{(i)}_1},e^{-t^{(i)}_2})\Bigg)\Bigg|_{a_1=a_2=0}.
\end{align*}
The Chern integrals are given by the generating series
\[
I^C_{S,V}(w)=\sum_{n=0}^\infty w^n \int_{\Hilb_n S} c_{2n}(V^{[n]})
\]
\[
= \left(\sum_{\lambda^{(1)},\cdots,\lambda^{(M)}} \prod_{i=1}^M \prod_{\square\in\lambda^{(i)}} \frac{w \prod_{j=1}^k (1 + v_j^{(i)} - t_1^{(i)} c(\square) - t_2^{(i)} r(\square))}{((a(\square)+1) t_1^{(i)} - l(\square) t_2^{(i)}) ((l(\square)+1) t_2^{(i)} - a(\square) t_1^{(i)})}\right)\Bigg|_{a_1=a_2=0}.
\]
Denoting
\[
\Omega^{C}(w;v_1,\ldots,v_k;t_1,t_2) = \sum_{\lambda} \prod_{\square\in\lambda} \frac{ \prod_{j=1}^k (1 + v_j - t_1 c(\square) - t_2 r(\square))}{((a(\square)+1) t_1 - l(\square) t_2) ((l(\square)+1) t_2 - a(\square) t_1)} w^{|\lambda|},
\]
we have
\[
I^C_{S,V}(w) = \left(\prod_{i=1}^M \Omega^C(w;v_1^{(i)},\ldots,v_k^{(i)};t_1^{(i)},t_2^{(i)})\right) \Bigg|_{a_1=a_2=0}.
\]

%\begin{comment}
\begin{comment}
Similarly, for the Segre integrals we use
\[
\Omega^{S}(w;v_1,\ldots,v_k;t_1,t_2) = \sum_{\lambda} \prod_{\square\in\lambda} \frac{ \prod_{j=1}^k (1 - v_j + t_1 c(\square) + t_2 r(\square))^{-1}}{((a(\square)+1) t_1 - l(\square) t_2) ((l(\square)+1) t_2 - a(\square) t_1)} w^{|\lambda|},
\]
so that
\[
I^S_{S,V}(w) = \sum_{n=0}^\infty w^n \int_{\Hilb_n S} s_{2n}(V^{[n]})
=
\left(\prod_{i=1}^M \Omega^S(w;v_1^{(i)},\ldots,v_k^{(i)};t_1^{(i)},t_2^{(i)})\right) \Bigg|_{a_1=a_2=0},
\]
\end{comment}
For the Verlinde series we let
\[
\Omega^{V}(w;v_1,\ldots,v_k;t_1,t_2) = \sum_{\lambda} \prod_{\square\in\lambda} \frac{e^{\sum_{i=1}^k \left(v_i-c(\square) t_1 - r(\square) t_2\right)} }{(1-e^{-(a(\square)+1) t_1 + l(\square) t_2}) (1-e^{-(l(\square)+1) t_2 + a(\square) t_1})} w^{|\lambda|},
\]
so that
\[
I^V_{S,V}(w) = \sum_{n=0}^\infty w^n \chi(S, (\det V)_{(n)} \otimes E^k)
=
\left(\prod_{i=1}^M \Omega^V(w;v_1^{(i)},\ldots,v_k^{(i)};t_1^{(i)},t_2^{(i)})\right) \Bigg|_{a_1=a_2=0}.
\]
%\end{comment}
\begin{rem}\label{rem:polynomials}
	Similarly to Remark \ref{rem:numbers}, before setting $a_1=a_2=0$, the coefficients on the  right hand sides of the above identities are guaranteed to be power series in $a_1,a_2$. Thus setting $a_1=a_2=0$ makes sense.
\end{rem}

We want more generally to consider $I_{S,\alpha}(w,z)$, $I^C_{S,\alpha}(w)$,  $I^V_{S,\alpha}(w)$ for
elements $\alpha=V-W\in K^0(S)$, where $V$ and $W$ are $T$-equivariant bundles on $S$. Denote the weights of $V$ and $W$ at the $i$-th fixpoint by $v_1^{(i)}(a_1,a_2),\ldots,v_k^{(i)}(a_1,a_2)$ and $x_1^{(i)}(a_1,a_2),\ldots,x_m^{(i)}(a_1,a_2)$.
We consider a more general version of the partition function $\Omega$. We put
\begin{align*}
\Omega(w;z_1,\ldots,z_k;y_1,\ldots,y_m;q,t) &:= \sum_\lambda \frac{\prod_{\square\in \lambda} \frac{\prod_{i=1}^k(1-q^{c(\square)} t^{r(\square)} z_i)}{\prod_{j=1}^m(1-q^{c(\square)} t^{r(\square)} y_j)}}{\prod_{\square\in\lambda}(q^{a(\square)+1}-t^{l(\square)})(q^{a(\square)}-t^{l(\square)+1})} w^{|\lambda|},\\
\Omega^{C}(w;z_1,\ldots,z_k;y_1,\ldots,y_m;t_1,t_2) &:= \sum_{\lambda}\frac{  \prod_{\square\in\lambda} \frac{\prod_{i=1}^k (1 + z_i - t_1 c(\square) - t_2 r(\square))}{  \prod_{j=1}^m (1 + y_j - t_1 c(\square) - t_2 r(\square))}}{\prod_{\square\in\lambda}((a(\square)+1) t_1 - l(\square) t_2) ((l(\square)+1) t_2 - a(\square) t_1)} w^{|\lambda|};
\end{align*}
Then
\begin{align*}
I_{S,\alpha}(w,z)&=\Bigg(\prod_{i=1}^M\Omega(w;ze^{v^{(i)}_1},\ldots,ze^{v^{(i)}_k};ze^{x^{(i)}_1},\ldots,ze^{x^{(i)}_m};e^{-t^{(i)}_1},e^{-t^{(i)}_2})\Bigg)\Bigg|_{a_1=a_2=0},\\
I^C_{S,\alpha}(w)&= \left(\prod_{i=1}^M \Omega^C(w;v_1^{(i)},\ldots,v_k^{(i)};x^{(i)}_1,\ldots,x^{(i)}_m;t_1^{(i)},t_2^{(i)})\right) \Bigg|_{a_1=a_2=0}.
\end{align*}

To reduce to the case that $\alpha$ is a vector bundle we use a well-known trick.

\begin{lem}
	Let $f(z)=\sum_{n=0}^\infty f_n z^n$ be a power series where each $f_n$ is a power series in some other variables and $f_0$ begins with $1$. Then
	\[
	\prod_{i=1}^k f(z_i)
	\]
	is a power series whose coefficients can be written in a uniform way as polynomials in $k$ and symmetric functions of $z_1,z_2,\ldots$. Moreover, the substitution $k\to m-l$ and $p_n(z_1,z_2,\ldots)\to p_n(y_1,y_2,\ldots)- p_n(x_1,x_2,\ldots)$ produces
	\[
	\frac{\prod_{i=1}^m f(y_i)}{\prod_{j=1}^l f(x_i)}.
	\]
\end{lem}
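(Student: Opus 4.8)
The plan is to take logarithms, turning the product into a sum in which the roles of $k$ and of the variables $z_i$ separate cleanly. Since $f_0$ begins with $1$ it is an invertible power series, so I can write $\log f(z) = \log f_0 + \log\!\bigl(1 + f_0^{-1}(f_1 z + f_2 z^2 + \cdots)\bigr)$, a well-defined power series $\sum_{n\ge 0} g_n z^n$ whose coefficients $g_n$ are power series in the remaining variables, with $g_0 = \log f_0$.

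Summing over $i$ gives
\[
\log \prod_{i=1}^k f(z_i) = \sum_{i=1}^k \log f(z_i) = k\, g_0 + \sum_{n\ge 1} g_n\, p_n(z_1,\ldots,z_k),
\]
where $p_n = \sum_i z_i^n$. At the level of logarithms the expression is thus \emph{linear} in the formal data $(k, p_1, p_2, \ldots)$ with universal coefficients $g_n$ not depending on $k$. Exponentiating recovers $\prod_{i=1}^k f(z_i) = \exp\bigl(k g_0 + \sum_{n\ge 1} g_n p_n\bigr)$, and expanding the exponential expresses every coefficient as a polynomial in $k$ and in the power sums $p_n$, that is in the symmetric functions of the $z_i$, uniformly in $k$.

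To justify that the dependence on $k$ is genuinely polynomial I would introduce a grading in which each $z_i$ and each auxiliary variable has positive degree. Then $f(z) = 1 + (\text{positive degree})$, so each $g_n$, including $g_0 = \log f_0$, has strictly positive degree. Consequently any monomial of a fixed total degree in the expansion of $\exp\bigl(k g_0 + \sum_n g_n p_n\bigr)$ can absorb only boundedly many factors of $k g_0$, so its coefficient is a polynomial in $k$ rather than a power series. This bookkeeping is the step I expect to be the only real subtlety; everything else is formal.

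For the final assertion, observe that the prescribed substitution $k \mapsto m-l$, $p_n \mapsto p_n(y_1,\ldots) - p_n(x_1,\ldots)$ sends the logarithm to
\[
(m-l) g_0 + \sum_{n\ge 1} g_n\bigl(p_n(y) - p_n(x)\bigr) = \log \prod_{i=1}^m f(y_i) - \log \prod_{j=1}^l f(x_j).
\]
Since the substitution is a ring homomorphism on the completed ring of symmetric functions adjoined with the indeterminate $k$, and it commutes with exponentiation of a positive-degree argument, applying it to $\exp\bigl(k g_0 + \sum_n g_n p_n\bigr)$ yields exactly $\prod_{i=1}^m f(y_i) \big/ \prod_{j=1}^l f(x_j)$, proving the moreover part.
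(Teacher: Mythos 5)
Your proof is correct and follows essentially the same route as the paper: take logarithms so that $\log\prod_{i=1}^k f(z_i) = k g_0 + \sum_{n\ge 1} g_n p_n$, exponentiate, and note that the substitution $k\mapsto m-l$, $p_n\mapsto p_n(y)-p_n(x)$ turns this into the logarithm of the desired quotient. The only difference is that the paper declares the polynomiality in $k$ "clear," whereas you spell out the (correct) degree-counting justification that $g_0$ has positive degree, so each fixed-degree coefficient of $\exp\bigl(k g_0 + \sum_n g_n p_n\bigr)$ involves only boundedly many powers of $k$.
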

\begin{proof}
	Suppose the logarithm of $f$ is given by
	\[
	\log f(z) = \sum_{n=0}^\infty g_n z^n,
	\]
	where $g_0$ begins with $0$. Then
	\[
	\prod_{i=1}^k f(z_i) = e^{k g_0 + \sum_{n=1}^\infty p_n(z_1,z_2,\ldots) g_n},\qquad	\frac{\prod_{i=1}^m f(y_i)}{\prod_{j=1}^l f(x_i)}= e^{(m-l) g_0 - \sum_{n=1}^\infty (p_n(y_1,y_2,\ldots)-p_n(x_1,x_2,\ldots))g_n},
	\]
	from which the statement is clear.
\end{proof}

\begin{cor}\label{cor:alpha from V}
	$\Omega(w;e^{y_1}z,\ldots,e^{y_m}z;e^{x_1}z,\ldots,e^{x_l}z;q,t) $ is obtained from
	 $\Omega(w;e^{v_1}z,\ldots,e^{v_k}z;q,t) $ and $\Omega^C(w;y_1,\ldots,y_m;x_1,\ldots,x_l;t_1,t_2)$ from $\Omega^C(w;v_1,\ldots,v_k;t_1,t_2)$ by the substitutions
	 	\[
		k \to m-l,\qquad
	p_n(v_1,v_2,\ldots)\to p_n(y_1,y_2,\ldots)-p_n(x_1,x_2,\ldots).
	\]
\end{cor}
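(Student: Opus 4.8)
The plan is to deduce Corollary~\ref{cor:alpha from V} directly from the preceding Lemma, applied termwise to the sum over partitions defining each of $\Omega$ and $\Omega^C$. First I would isolate the product structure. Fixing $w$, $q$, $t$, the single-alphabet partition function can be written as
\[
\Omega(w; e^{v_1}z, \ldots, e^{v_k}z; q, t) = \sum_\lambda \frac{w^{|\lambda|}}{N_\lambda(q,t)} \prod_{i=1}^k f_\lambda(v_i), \qquad f_\lambda(v) := \prod_{\square\in\lambda}\left(1 - q^{c(\square)} t^{r(\square)} e^{v} z\right),
\]
and similarly for $\Omega^C$ with $f_\lambda(v) := \prod_{\square\in\lambda}(1 + v - t_1 c(\square) - t_2 r(\square))$ and the denominator $\prod_{\square\in\lambda}((a(\square)+1)t_1 - l(\square) t_2)((l(\square)+1)t_2 - a(\square) t_1)$ in place of $N_\lambda$. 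The point is that, up to a prefactor not involving the $v_i$, each summand is a product $\prod_{i=1}^k f_\lambda(v_i)$ of one single-variable series over the $k$ weights.

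Next I would verify the hypothesis of the Lemma for each $f_\lambda$. Its value at $v=0$ is $f_\lambda(0) = \prod_{\square\in\lambda}(1 - q^{c(\square)} t^{r(\square)} z)$ (resp. $\prod_{\square\in\lambda}(1 - t_1 c(\square) - t_2 r(\square))$), which, regarded as a power series in the remaining variables $z,q,t$ (resp. $t_1,t_2$), begins with $1$. Thus the Lemma applies with its power-series argument played by $v$ and its ``other variables'' played by $z,q,t$ (resp. $t_1,t_2$); writing $\log f_\lambda(v) = \sum_{n\geq 0} g_{\lambda,n} v^n$ with $g_{\lambda,0}$ free of constant term, it yields $\prod_{i=1}^k f_\lambda(v_i) = \exp\bigl(k\, g_{\lambda,0} + \sum_{n\geq 1} g_{\lambda,n}\, p_n(v_1, v_2, \ldots)\bigr)$. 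Consequently the coefficients of $\Omega$ (resp. $\Omega^C$) are, uniformly in $\lambda$, polynomials in $k$ and in the power sums $p_n(v)$.

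Finally I would observe that the prescribed substitution $k \to m - l$, $p_n(v) \to p_n(y) - p_n(x)$ touches only these universal coordinates and is independent of $\lambda$, so it commutes with the sum over partitions. Applying the ``moreover'' part of the Lemma termwise turns each $\prod_{i=1}^k f_\lambda(v_i)$ into $\prod_{i=1}^m f_\lambda(y_i) \big/ \prod_{j=1}^l f_\lambda(x_j)$, which is exactly the box-product appearing in the two-alphabet $\Omega$ (resp. $\Omega^C$): for $\Omega$ the factor $f_\lambda(y_i)=\prod_\square(1-q^{c}t^{r}e^{y_i}z)$ matches the numerator entry $z_i=e^{y_i}z$, and $f_\lambda(x_j)$ matches the denominator entry $e^{x_j}z$. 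Summing over $\lambda$ recovers the two-alphabet partition functions. There is no convergence issue, since $w^{|\lambda|}$ grades the sum and only finitely many $\lambda$ contribute to each power of $w$, so the substitution is performed coefficientwise.

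I do not expect a genuine obstacle here: the statement is essentially a repackaging of the Lemma, and the only care needed is bookkeeping—keeping straight which symbol plays the role of the Lemma's power-series variable versus its ``other variables'', and managing the notational clash between the global variable $z$ and the dummy variable named $z$ in the Lemma. The one point worth checking explicitly is that $f_\lambda(0)$ begins with $1$ in each case, as this is precisely what licenses passing to $\log f_\lambda$ and hence the presentation as a polynomial in $k$ together with the power sums.
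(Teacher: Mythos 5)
Your proof is correct and takes essentially the same approach as the paper: there the corollary is stated as an immediate consequence of the preceding lemma, applied termwise to the partition sum, which is exactly what you spell out. The details you make explicit---that each $f_\lambda(0)$ begins with $1$ as a series in $z,q,t$ (resp.\ $t_1,t_2$), that the prefactor $w^{|\lambda|}$ over the denominator is untouched by the substitution, and that the substitution commutes with the sum over $\lambda$ since only finitely many partitions contribute to each power of $w$---are precisely the bookkeeping the paper leaves implicit.
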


\subsection{From the master partition function to the specialized ones}

The following specializations are easy to verify term-by-term.
\begin{prop}\label{prop:specializations}
	The Chern and Verlinde functions $\Omega^C, \Omega^V$ are given by the following term-by-term limits:
	\[
	\Omega^C(w; v_1,\ldots,v_k; t_1, t_2) = \lim_{\veps\to 0} \Omega\left(-w \varepsilon^{2-k} (1+\varepsilon)^k; \frac{e^{-\veps v_1}}{1+\veps},\ldots,\frac{e^{-\veps v_k}}{1+\veps};e^{\veps t_1}, e^{\veps t_2}\right),
	\]
	\[
	\Omega^V(w; v_1,\ldots,v_k; t_1, t_2) = \lim_{\veps\to 0} \Omega\left((-1)^k w \veps^{k+1}; \veps^{-1} e^{v_1},\ldots,\veps^{-1} e^{v_k}, \veps^{-1}; e^{-t_1}, e^{-t_2}\right).
	\]
\end{prop}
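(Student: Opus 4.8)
The plan is to verify both identities \emph{term by term}: for each fixed partition $\lambda$ I will show that the $\lambda$-summand of the substituted master function $\Omega$ converges, as $\veps\to 0$, to the $\lambda$-summand of $\Omega^C$ (resp.\ $\Omega^V$). Since all three functions are sums over $\lambda$ of products over the boxes $\square\in\lambda$, it is enough to track, box by box, the leading $\veps$-behaviour of the numerator factor, of the denominator factor, and of the contribution of the $w$-substitution, and then to collect powers of $\veps$ over the $|\lambda|$ boxes.

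For the Chern specialization I substitute $q=e^{\veps t_1}$, $t=e^{\veps t_2}$, $z_i=e^{-\veps v_i}/(1+\veps)$ and expand to first order in $\veps$. Each numerator factor becomes
\[
1-q^{c(\square)}t^{r(\square)}z_i=\frac{\veps\,\big(1+v_i-t_1 c(\square)-t_2 r(\square)\big)+O(\veps^2)}{1+\veps},
\]
and the two denominator factors become $\veps\big((a(\square)+1)t_1-l(\square)t_2\big)+O(\veps^2)$ and $\veps\big(a(\square)t_1-(l(\square)+1)t_2\big)+O(\veps^2)$. Thus each box contributes $\veps^{k}$ from its $k$ numerator factors and $\veps^{2}$ from its denominator, while $w\mapsto -w\veps^{2-k}(1+\veps)^{k}$ supplies $\veps^{2-k}$ per box; the three powers sum to $\veps^{0}$, so the limit is finite. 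The factors $(1+\veps)^{\mp k|\lambda|}$ arising from the numerator and the $w$-substitution cancel identically, and the sign from $-w$ together with the sign from rewriting $a(\square)t_1-(l(\square)+1)t_2=-\big((l(\square)+1)t_2-a(\square)t_1\big)$ combine to $+1$ per box. What survives in the limit is exactly the $\lambda$-summand of $\Omega^C$.

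For the Verlinde specialization I substitute $q=e^{-t_1}$, $t=e^{-t_2}$, together with the $k+1$ arguments $z_j=\veps^{-1}e^{v_j}$ for $j\le k$ and $z_{k+1}=\veps^{-1}$, so that now $\veps^{-1}\to\infty$ forces the subtracted term to dominate: $1-q^{c(\square)}t^{r(\square)}z_j\sim-\veps^{-1}e^{v_j-t_1 c(\square)-t_2 r(\square)}$, and the extra factor contributes $-\veps^{-1}e^{-t_1 c(\square)-t_2 r(\square)}$. Each box then contributes $\veps^{-(k+1)}$ from its numerator, matched against the $\veps^{k+1}$ per box from $w\mapsto(-1)^k w\veps^{k+1}$, again leaving $\veps^{0}$ in the limit. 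To identify the denominators I factor
\[
(q^{a(\square)+1}-t^{l(\square)})(q^{a(\square)}-t^{l(\square)+1})=-\,e^{-a(\square)t_1-l(\square)t_2}\big(1-e^{-(a(\square)+1)t_1+l(\square)t_2}\big)\big(1-e^{-(l(\square)+1)t_2+a(\square)t_1}\big),
\]
which reproduces the $\Omega^V$ denominator up to the box factor $-e^{-a(\square)t_1-l(\square)t_2}$.

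The only step that is not purely formal, and the one I would watch most carefully, is reconciling the exponential prefactors in the Verlinde case. The numerator produces $e^{-(k+1)(t_1 c(\square)+t_2 r(\square))}$ per box, whereas $\Omega^V$ wants $e^{-k(t_1 c(\square)+t_2 r(\square))}$, and the surplus box factors $e^{-a(\square)t_1-l(\square)t_2}$ from the denominator are expressed through arm/leg rather than column/row statistics. Both mismatches disappear only after summing over all boxes, by means of the identity
\[
\sum_{\square\in\lambda}a(\square)=\sum_{\square\in\lambda}c(\square),\qquad\sum_{\square\in\lambda}l(\square)=\sum_{\square\in\lambda}r(\square)
\]
recorded earlier in the text, which turns $\sum_\square\big(a(\square)t_1+l(\square)t_2\big)$ into $\sum_\square\big(c(\square)t_1+r(\square)t_2\big)$ and thereby converts the denominator surplus into the required correction of the numerator exponent. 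A final sign count---$(-1)^{(k+1)|\lambda|}$ from the numerator, $(-1)^{k|\lambda|}$ from the $w$-substitution, and $(-1)^{|\lambda|}$ from the denominator---collapses to $+1$, so each $\lambda$-summand converges to the corresponding summand of $\Omega^V$, completing the verification.
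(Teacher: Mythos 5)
Your proof is correct and takes essentially the same approach as the paper's: substitute into the master function $\Omega$ and check the term-by-term, box-by-box limit. You in fact supply more detail than the paper, which only writes out the Chern substitution and calls the Verlinde case ``straightforward''; your observation that the identities $\sum_{\square\in\lambda}a(\square)=\sum_{\square\in\lambda}c(\square)$ and $\sum_{\square\in\lambda}l(\square)=\sum_{\square\in\lambda}r(\square)$ are needed to reconcile the exponential prefactors is precisely the point that makes that case work.
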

\begin{proof}
	For the Chern function substitution we have
	\[
	\Omega\left(-w \varepsilon^{2-k} (1+\varepsilon)^k; \frac{e^{-\veps v_1}}{1+\veps},\ldots,\frac{e^{-\veps v_k}}{1+\veps};e^{\veps t_1}, e^{\veps t_2}\right)
	\]
	\begin{equation}\label{eq:chern substitution}
	= \sum_\lambda \frac{\veps^{(2-k) |\lambda|} \prod_{i=1}^k\prod_{\square\in \lambda} (1+\veps-e^{\veps (c(\square) t_1 + r(\square) t_2 - v_i)})}{\prod_{\square\in\lambda}(e^{\veps(a(\square)+1) t_1}-e^{\veps l(\square) t_2})(e^{\veps(l(\square)+1) t_2} - e^{\veps a(\square) t_1})} w^{|\lambda|},
	\end{equation}
	from which it is clear that the limit $\veps\to 0$ exists and equals $\Omega^C$. The formula for the Verlinde function is straightforward.
\end{proof}

\subsection{Taking the logarithms}
By the expansion  \eqref{eq:log expansion}
\[
\log \Omega(w;z_1,\ldots,z_k;e^{t_1}, e^{t_2}) = \sum_{d_1,d_2\geq -1} H_{d_1,d_2}(w;z_1,\ldots,z_k) t_1^{d_1} t_2^{d_2}
\]
we obtain
$$\log \Omega(w;ze^{v_1}_1,\ldots,ze^{v_k};e^{t_1}, e^{t_2}) = \sum_{d_1,d_2\geq -1} H_{d_1,d_2}(w;ze^{v_1}_1,\ldots,ze^{v_k}) t_1^{d_1} t_2^{d_2}.
$$
Combining with Proposition \ref{prop:specializations} with the expansion
we also obtain
\[
\log \Omega^C(w;v_1,\ldots,v_k;t_1, t_2) = \sum_{d_1,d_2\geq -1} t_1^{d_1} t_2^{d_2} H_{d_1,d_2}^C (w; v_1,\ldots,v_k),
\]
where
\begin{equation}\label{eq:from H to Hc}
H_{d_1,d_2}^C (w; v_1,\ldots,v_k) =\lim_{\veps\to 0} \veps^{d_1+d_2} H_{d_1,d_2}\left(-w \varepsilon^{2-k} (1+\varepsilon)^k; \frac{e^{-\veps v_1}}{1+\veps},\ldots,\frac{e^{-\veps v_k}}{1+\veps}\right).
\end{equation}
An analogous statement holds for the Verlinde series with
\begin{equation}\label{eq:from H to Hv}
H_{d_1,d_2}^V (w; v_1,\ldots,v_k) = (-1)^{d_1+d_2} \lim_{\veps\to 0} H_{d_1,d_2}\left((-1)^k w \veps^{k+1}; \veps^{-1} e^{v_1},\ldots,\veps^{-1} e^{v_k}, \veps^{-1}\right).
\end{equation}

Let us expand $H_{d_1,d_2}^*$ where $*$ is either empty or one of $C,V$ as a power series in $v_i$. Writing the argument $\underline w$, we mean that the arguments are $w,z$ for $*$ empty and $w$ otherwise, and we temporarily write
$H_{d_1,d_2}(w,z;v_1,\ldots,v_k)=H_{d_1,d_2}(w;ze^{v_1}_1,\ldots,ze^{v_k})$.
\begin{equation}\label{eq:taylor expansion1}
H_{-1,-1}^*(\underline w;v_1,\ldots,v_k) = C_0^*(\underline w) + C_1^*(\underline w) \sum_{i=1}^k v_i + C_2^* (\underline w) e_2(v_1,\ldots,v_k) + C_{1,1}^*(\underline w) \left(\sum_{i=1}^k v_i\right)^2 + \cdots,
\end{equation}
\begin{equation}\label{eq:taylor expansion2}
H_{-1,0}^*(w;v_1,\ldots,v_k) = D_0^*(\underline w) + D_1^*(\underline w) \sum_{i=1}^k v_i + \cdots,
\end{equation}
\begin{equation}\label{eq:taylor expansion3}
H_{-1,1}^*(\underline w;v_1,\ldots,v_k) = E^*(\underline w) + \cdots,
\end{equation}
\begin{equation}\label{eq:taylor expansion4}
H_{0,0}^*(\underline w;v_1,\ldots,v_k) = F^*(\underline w) + \cdots,
\end{equation}
where dots mean terms of higher total degree in $v_i$.
\begin{prop}\label{integrals}
For $*$ empty or one of $C,V$ we have
\[
\log I_{S,V}^*(\underline w) = C_2^*(\underline w)\int_S c_2(V) + C_{1,1}^*(\underline w)\int_S c_1(V)^2 + F^*(\underline w) \chi(S)
\]
\[
+ D_1^*(\underline w) \int_S c_1(V) c_1(T_S) + E^*(\underline w)\left(\int_S c_1(T_S)^2 - 2 \chi(S)\right).
\]
\end{prop}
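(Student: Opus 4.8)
The plan is to take the logarithm of the localization product formula for $I_{S,V}^*$, substitute the expansions of $\log\Omega^*$ developed above, and then isolate the constant term in the equivariant parameters $a_1,a_2$ by means of the localization identities \eqref{eq:vanishing1}--\eqref{eq:localization5}. Concretely, using multiplicativity of the logarithm I would write
\[
\log I_{S,V}^*(\underline w) = \Bigg(\sum_{i=1}^M \log\Omega^*\big(w; v_1^{(i)},\ldots,v_k^{(i)}; t_1^{(i)}, t_2^{(i)}\big)\Bigg)\Bigg|_{a_1=a_2=0},
\]
insert the expansion $\log\Omega^* = \sum_{d_1,d_2\geq -1} t_1^{d_1} t_2^{d_2}\, H^*_{d_1,d_2}(\underline w; v_1,\ldots,v_k)$, and then apply the Taylor expansions \eqref{eq:taylor expansion1}--\eqref{eq:taylor expansion4} of each $H^*_{d_1,d_2}$ in the weights $v_j$. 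For $*$ empty the same expansion is used through the abbreviation $H_{d_1,d_2}(w,z;v_1,\ldots,v_k)=H_{d_1,d_2}(w;ze^{v_1},\ldots,ze^{v_k})$, so the three cases are handled uniformly.

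The organizing principle is a homogeneity grading in $a_1,a_2$: each weight $t_j^{(i)}$ and $v_j^{(i)}$ is a $\BZ$-linear form in $a_1,a_2$ with vanishing constant term, so a monomial $t_1^{d_1} t_2^{d_2}\prod_j (v_j^{(i)})^{e_j}$ is homogeneous of degree $N=d_1+d_2+\sum_j e_j$. Setting $a_1=a_2=0$ kills every piece with $N>0$, while the regularity statement that $(1-q)(1-t)H$ is regular near $q=t=1$ (equivalently $d_1,d_2\geq-1$) restricts the negative degrees to $N=-2,-1$. I would check these cancel after summing over $i$: the $N=-2$ part is $C_0^*\sum_i (t_1^{(i)}t_2^{(i)})^{-1}$, which vanishes by \eqref{eq:vanishing1}; the two $N=-1$ parts are $C_1^*\sum_i (\sum_j v_j^{(i)})/(t_1^{(i)}t_2^{(i)})$ and $D_0^*\sum_i (t_1^{(i)}+t_2^{(i)})/(t_1^{(i)}t_2^{(i)})$, vanishing by \eqref{eq:vanishing3} and \eqref{eq:vanishing2}. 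Here I use the $t_1\leftrightarrow t_2$ symmetry of $\Omega^*$ (equivalently $H^*_{d_1,d_2}=H^*_{d_2,d_1}$), which pairs the $(d_1,d_2)=(-1,0)$ and $(0,-1)$ contributions under a single coefficient $D_0^*$. This justifies that evaluation at $a=0$ is legitimate and shows that $C_0^*,C_1^*,D_0^*$ drop out.

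I would then collect the surviving terms of degree $N=0$, which come precisely from the index data $(d_1,d_2;s)$ with $d_1+d_2+s=0$, namely $(-1,-1;2)$, $(-1,0;1)$, $(0,-1;1)$, $(-1,1;0)$, $(1,-1;0)$, $(0,0;0)$. The degree-two part of $H^*_{-1,-1}$ yields $C_2^*\sum_i e_2(v^{(i)})/(t_1^{(i)}t_2^{(i)})+C_{1,1}^*\sum_i (\sum_j v_j^{(i)})^2/(t_1^{(i)}t_2^{(i)})$, equal to $C_2^*\int_S c_2(V)+C_{1,1}^*\int_S c_1(V)^2$ by \eqref{eq:localization1}--\eqref{eq:localization2}. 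The degree-one parts of $H^*_{-1,0}$ and $H^*_{0,-1}$ yield $D_1^*\sum_i (t_1^{(i)}+t_2^{(i)})(\sum_j v_j^{(i)})/(t_1^{(i)}t_2^{(i)})=D_1^*\int_S c_1(V)c_1(\CT_S)$ by \eqref{eq:localization4}. The constant parts of $H^*_{-1,1}$ and $H^*_{1,-1}$ yield $E^*\sum_i ((t_1^{(i)})^2+(t_2^{(i)})^2)/(t_1^{(i)}t_2^{(i)})$; writing $(t_1)^2+(t_2)^2=(t_1+t_2)^2-2t_1t_2$ and applying \eqref{eq:localization5} and \eqref{eq:localization3} gives $E^*(\int_S c_1(\CT_S)^2-2\chi(S))$. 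Finally the constant part of $H^*_{0,0}$ gives $F^*M=F^*\chi(S)$ by \eqref{eq:localization3}. Summing these reproduces the asserted identity.

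The main obstacle is conceptual rather than computational: one must ensure that the expansion, the summation over fixed points, and the specialization $a=0$ may be interchanged, and that no term of degree $N=-1,-2$ survives. This is exactly what the regularity bound $d_1,d_2\geq-1$ together with the three vanishing identities \eqref{eq:vanishing1}--\eqref{eq:vanishing3} guarantees. By Remark \ref{rem:numbers} each surviving sum over fixed points is in fact already constant in $a_1,a_2$, so the resulting expression is manifestly independent of the chosen toric model, as it must be; everything beyond this is the bookkeeping indicated above.
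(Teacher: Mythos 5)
Your proposal is correct and follows essentially the same route as the paper's proof: take the logarithm of the localization product, expand via the $H^*_{d_1,d_2}$, kill the negative-degree terms with the vanishing identities \eqref{eq:vanishing1}--\eqref{eq:vanishing3}, discard positive-degree terms at $a_1=a_2=0$, and evaluate the degree-zero terms with \eqref{eq:localization1}--\eqref{eq:localization5}. Your write-up merely makes explicit the bookkeeping (the grading by total degree $N$ and the $t_1\leftrightarrow t_2$ symmetry pairing $(-1,0)$ with $(0,-1)$ and $(-1,1)$ with $(1,-1)$) that the paper's proof leaves implicit.
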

\begin{proof}
	We write the identities from Section \ref{ssec:integrals} as follows:
	\[
	\log I_{S,V}^*(\underline w)
	= \left(\sum_{d_1,d_2\geq -1} \sum_{i=1}^M \left(t_1^{(i)}\right)^{d_1} \left(t_2^{(i)}\right)^{d_2} H_{d_1,d_2}^*\left(\underline w;v_1^{(i)},\ldots,v_k^{(i)}\right)\right)\Bigg|_{a_1=a_2=0}.
	\]
	The terms of negative degree in $t_1^{(i)}, t_2^{(i)}, v_1^{(i)},\ldots,v_k^{(i)}$ after summing over $i$ produce zero by \eqref{eq:vanishing1}--\eqref{eq:vanishing3}. Terms of positive degree can be ignored because they vanish after setting $a_1=a_2=0$. Thus we are left with the terms of degree zero, which are precisely given by the series $C_2^*(\underline w)$, $C_{1,1}^*(\underline w)$, $F^*(\underline w)$, $D_1^*(\underline w)$, $E^*(\underline w)$. The sum over $i$ is then evaluated using \eqref{eq:localization1}--\eqref{eq:localization5}.
\end{proof}

Recall that we can write
\[
I_{S,V}(w,z) = G_0(w,z)^{c_2(V)} G_1(w,z)^{\chi(\det(V))} G_2(w,z)^{\frac{1}{2}\chi(\CO_S)} G_3(w,z)^{c_1(V)\cdot K_S-\frac{1}{2}K_S^2} G_4(w,z)^{K_S^2},
\]
where we omit $\int_S$ in the exponents. We use
$c_1(T_S)=-K_S,$ $\chi(S) = 12 \chi(\CO_S) - K_S^2
$
and obtain
\begin{equation}
\label{GCEF}
\begin{split}
\log G_0(w,z)&=C_2(w,z),\quad \log G_1(w,z)=2C_{1,1}(w,z),\\
\log G_2(w,z)&=24(F(w,z)-2 E(w,z))-4C_{1,1}(w,z),\\
\log G_3(w,z)&=-D_1(w,z)+C_{1,1}(w,z),\\
\log G_4(w,z)&=-F(w,z) + 3 E(w,z)+\frac{1}{2}(C_{1,1}(w,z)-D_1(w,z)).
%&=-F(w,z) + 3 E(w,z)-\frac{1}{2}D_1(w,z)+\frac{1}{2}C_{1,1}(w,z).
\end{split}
\end{equation}
We get the same formulas for the $A_i(w)$, $B_i(w)$ with $C_2(w,z)$, $C_{1,1}(w,z)$, $D_1(w,z)$, $F(w,z)$, $E(w,z)$ replaced
by $C^C_2(w)$, $C^C_{1,1}(w)$, $D^C_1(w)$, $F^C(w)$, $E^C(w)$ and
$C^V_2(w)$, $C^V_{1,1}(w)$, $D^V_1(w)$, $F^V(w)$, $E^V(w)$, respectively.
Note that $\Omega^V$ depends only on the sum $\sum_{i=1}^k v_i$. Thus the term $C_2^V(w)$ vanishes.

\begin{rem}\label{univalpha}
In the product formula \eqref{prodform} for $I_{S,\alpha}(z,w)$ with $\alpha\in K^0(S)$, we see by
Corollary \ref{cor:alpha from V}, and Proposition \ref{integrals} that $G_0,\ldots,G_4$ are power series in $w,z$ whose
coefficients are universal polynomials in $k=\rk(\alpha)$. In particular they are determined for all $\alpha$ and $k$ by the cases of  vector bundles of all sufficiently high ranks.
The analogous statement holds for
$I^V_{S,\alpha}(w)$, $I^C_{S,\alpha}(z,w)$. Thus, in future we can assume that $\alpha$ is the class of  a vector bundle of rank at least $3$.
\end{rem}

\begin{comment}
So we obtain
\begin{equation}
\label{BCEF}
\begin{split}
\log B_1(w) &= 2 C_{1,1}^V(w),\quad \log B_2(w) = -4 C_{1,1}^V(w) + 24(F^V(w) - 24 E^V(w)),\\
\log B_3(w)& = C_{1,1}^V(w)-D_1^V(w),\\
\log B_4(w) &= -F^V(w) + 3 E^V(w) + \frac12(C_{1,1}^V(w)-D_1^V(w)).
\end{split}
\end{equation}
\end{comment}
\section{The constraints}

\subsection{Regularity and symmetry}
In order to determine the generating functions $G_i(w,z)$, our strategy is to obtain information about the series $H_{-1,-1}$, $H_{-1,0}$, $H_{-1,1}$ and $H_{0,0}$. Besides the functional equation from Theorem \ref{thm:symmetry} we need a constraint that stems from the observation that the right hand side of \eqref{eq:from H to Hc} before taking the limit has no pole at $\veps=0$. This is clear from \eqref{eq:chern substitution}.
\begin{defn}\label{defn:regularity}
	Let $k$ be an integer or complex parameter. Let $f(w,z)=\sum_{m,n=0}^\infty f_{m,n} w^m z^n$ be a power series.
	\begin{enumerate}
	\item $f(w,z)$ is \emph{$d$-regular} (simply \emph{regular} if $d=0$) for some integer $d\geq 0$ if for all $m$ there exists a polynomial $p_m(x)$ of degree at most $2m-d$ such that for all $n\geq 0$ we have
	\[
	f_{m,n} = (-1)^n p_m(n) \binom{km}{n}.
	\]
	$d$-regularity depends on $k$, but  $k$ will always be clear from the context.
	\item $f(w,z)$ is \emph{symmetric} if
	\[
	f(w,z)=f(w^{-1},wz)
	\]
	holds.
	\end{enumerate}
\end{defn}

The regularity condition is motivated by the following
\begin{lem}
	Suppose $k>0$ is an integer and a series $f(w,z)=\sum_{m,n=0}^\infty f_{m,n} w^m z^n$ is such that for each $m$ we have $f_{m,n}=0$ as soon as $n>km$. Form a new series
	\[
	g(w,\veps) = f\left(w \varepsilon^{2-k} (1+\varepsilon)^k, \frac{1}{1+\veps}\right).
	\]
	Then $f$ is $d$-regular if and only if $g(w,\veps)\in\veps^d\BC[[w,\veps]]$. If $f$ is $d$-regular, then we have
	\begin{equation}\label{eq:limit}
	\lim_{\veps\to 0} \veps^{-d} g(w,\veps) = (-1)^d \sum_{m=0}^\infty p_m(x)|_{x^{2m-d}} \;(km)_{(2m-d)}  w^m,
	\end{equation}
	where $p_m$ are the polynomials from Definition \ref{defn:regularity}.
\end{lem}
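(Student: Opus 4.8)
The plan is to fix $m$ and analyze the coefficient $[w^m]g$ directly. Substituting the arguments, one finds $[w^m]g = \veps^{(2-k)m}(1+\veps)^{km}\sum_n f_{m,n}(1+\veps)^{-n} = \veps^{(2-k)m}Q_m(\veps)$, where $Q_m(\veps):=\sum_{n=0}^{km}f_{m,n}(1+\veps)^{km-n}$ is a genuine polynomial in $\veps$ of degree $\le km$, the finiteness of the sum coming from the hypothesis that $f_{m,n}=0$ for $n>km$. Since $Q_m$ is a polynomial, the condition $g\in\veps^d\BC[[w,\veps]]$ is equivalent, coefficient by coefficient in $w$, to $\ord_{\veps=0}Q_m\ge (k-2)m+d$ for every $m$; this single inequality simultaneously encodes that $\veps^{(2-k)m}Q_m$ has no poles in $\veps$ (relevant only when $k>2$) and that it is divisible by $\veps^d$. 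Thus everything reduces to computing the vanishing order of $Q_m$ at $\veps=0$.

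Next I would convert derivatives of $Q_m$ at $\veps=0$ into finite differences. Differentiating term by term gives $Q_m^{(j)}(0)=\sum_{n}f_{m,n}(km-n)_{(j)}$. For any $f$ with $f_{m,n}=0$ for $n>km$ one may always let $p_m$ be the unique polynomial of degree $\le km$ with $f_{m,n}=(-1)^np_m(n)\binom{km}{n}$ for $0\le n\le km$ (interpolating the values $(-1)^nf_{m,n}/\binom{km}{n}$), so that $d$-regularity is \emph{precisely} the assertion $\deg p_m\le 2m-d$. Using the identity $\binom{km}{n}(km-n)_{(j)}=(km)_{(j)}\binom{km-j}{n}$ together with $\sum_{n=0}^M(-1)^n\binom{M}{n}p_m(n)=(-1)^M\Delta^Mp_m(0)$ applied with $M=km-j$, I obtain the clean formula $Q_m^{(j)}(0)=(km)_{(j)}(-1)^{km-j}\Delta^{km-j}p_m(0)$.

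From here the equivalence follows. By Newton's forward-difference expansion $p_m(x)=\sum_{i\ge 0}\Delta^ip_m(0)\binom{x}{i}$, the degree bound $\deg p_m\le 2m-d$ holds if and only if $\Delta^ip_m(0)=0$ for all $i>2m-d$; translating through $i=km-j$ this is exactly $Q_m^{(j)}(0)=0$ for all $j<(k-2)m+d$, i.e.\ $\ord_{\veps=0}Q_m\ge(k-2)m+d$. Combined with the first paragraph this shows $f$ is $d$-regular iff $g\in\veps^d\BC[[w,\veps]]$. Finally I would read off the limit: when $f$ is $d$-regular, $\veps^{-d}[w^m]g=\veps^{(2-k)m-d}Q_m(\veps)$ with $\ord Q_m\ge(k-2)m+d$, so the limit as $\veps\to0$ is the single coefficient $[\veps^{(k-2)m+d}]Q_m=\tfrac{1}{j!}Q_m^{(j)}(0)$ with $j=(k-2)m+d$. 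Plugging in the boxed formula, noting $km-j=2m-d$, $\Delta^{2m-d}p_m(0)=(2m-d)!\,p_m(x)|_{x^{2m-d}}$, $(-1)^{2m-d}=(-1)^d$, and simplifying $\tfrac{(km)_{(j)}(2m-d)!}{j!}=(km)_{(2m-d)}$, yields the claimed expression for $\lim_{\veps\to0}\veps^{-d}g$.

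The \emph{main obstacle} is the converse implication. A single vanishing derivative of $Q_m$ says essentially nothing about $\deg p_m$, since $\Delta^ip_m(0)$ can vanish accidentally at $0$; it is only the vanishing of the \emph{whole range} $Q_m^{(j)}(0)=0$ for $j=0,\dots,(k-2)m+d-1$, repackaged through the top finite differences $\Delta^ip_m(0)$ for $i=2m-d+1,\dots,km$ and Newton's expansion, that genuinely forces the degree bound. Getting the indexing of this range correct, and correctly interpreting $\ord_{\veps=0}Q_m\ge(k-2)m+d$ as encoding both ``no poles in $\veps$'' and ``divisibility by $\veps^d$'' when $k>2$, is where the care is needed; the two simplifications of falling factorials at the end are then routine.
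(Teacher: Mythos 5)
Your proof is correct and complete, including the converse direction and the limit formula, but the mechanism of the key step differs from the paper's, so a comparison is worthwhile. Both arguments fix $m$ and reduce to the observation that $[w^m]g=\veps^{(2-k)m}Q_m(\veps)$ is a Laurent polynomial, and both then set up a filtration-preserving linear correspondence between its $\veps$-coefficients and the coefficients of $p_m$. The paper works \emph{backward}: writing $g_m=\sum_{i=0}^{km}c_i\veps^{2m-i}$, it performs the inverse substitution $\veps=z^{-1}-1$ to recover $f_m(z)=\sum_i c_i z^i(1-z)^{km-i}$ and reads off the explicit formula $p_m(x)=\sum_i(-1)^i\frac{(x)_{(i)}}{(km)_{(i)}}c_i$, from which the equivalence ($\deg p_m\le 2m-d$ iff $c_i=0$ for $i>2m-d$) and the top-coefficient relation are immediate. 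You work \emph{forward}: you define $p_m$ by interpolation at $n=0,\dots,km$ (which settles uniqueness cleanly), convert Taylor coefficients of $Q_m$ into finite differences via $Q_m^{(j)}(0)=(km)_{(j)}(-1)^{km-j}\Delta^{km-j}p_m(0)$, and invoke Newton's expansion $p_m(x)=\sum_i\Delta^ip_m(0)\binom{x}{i}$ to identify the degree bound with vanishing of the top differences. The two computations are dual forms of the same linear algebra — comparing them gives precisely $\Delta^ip_m(0)=(-1)^i\,i!\,c_i/(km)_{(i)}$ — so neither is more general; the paper's version has the advantage of exhibiting $p_m$ explicitly in terms of the coefficients of $g$, while yours avoids the resubstitution and makes the ``whole range of vanishing derivatives'' issue in the converse, which you rightly flag as the delicate point, fully transparent. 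Two details you pass over are harmless but worth noting: deducing $\Delta^{km-j}p_m(0)=0$ from $Q_m^{(j)}(0)=0$ uses $(km)_{(j)}\neq 0$, which holds since $j\le km$ and $km$ is a positive integer; and in the degenerate case $d>2m$ both sides of the equivalence simply assert $f_{m,n}=0$ for all $n$, consistent with the convention that ``degree at most a negative number'' means the zero polynomial.
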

\begin{proof}
	Fix $m$ and consider the polynomial $f_m(z)=\sum_{n=0}^\infty f_{m,n} z^n$. Then
	\[
	g_m=g(w,\veps)|_{w^m} = f_m\left(\frac{1}{1+\veps}\right)\left(1+\veps\right)^{km} \veps^{(2-k)m}
	\]
	is a Laurent polynomial that can be written as
	\[
	g_m(\veps) = \sum_{i=0}^{km} c_i \veps^{2m-i}.
	\]
	Performing the inverse substitution $\veps=z^{-1}-1$ we find $f_m$ in terms of $g_m$:
	\[
	f_m(z)=\sum_{i=0}^{km} c_i z^i (1-z)^{km-i} = \sum_{i=0}^{km} c_i \sum_{n=0}^{km} (-1)^{n-i} \binom{km-i}{n-i} z^{n}.
	\]
	So we have
	\[
	f_{m,n} = \sum_{i=0}^{km} c_i (-1)^{n-i} \binom{km-i}{n-i} = (-1)^n \binom{km}{n}\sum_{i=0}^{km} (-1)^i \frac{(n)_{(i)}}{(km)_{(i)}} c_i
	=(-1)^n \binom{km}{n} p_m(n),
	\]
	where
	\[
	p_m(x) = \sum_{i=0}^{km} (-1)^i \frac{(x)_{(i)}}{(km)_{(i)}} c_i.
	\]
	The polynomial $p_m(x)$ is unique among polynomials of degree at most $km$ satisfying this property. The degree of $p_m$ is at most $2m-d$ if and only if $g_m\in\veps^{d} \BC[[\veps]]$, as claimed. If this is the case, then the top degree coefficient of $p_m$ is given by
	\[
	p_m(x)|_{x^{2m-d}} = (-1)^d \frac{c_{2m-d}}{(km)_{2m-d}}.
	\]
	Reversing this we obtain
	\[
	\lim_{\veps\to 0} \veps^{-d} g(w,\veps) = c_{2m-d} = (-1)^d p_m(x)|_{x^{2m-d}} (km)_{2m-d},
	\]
	as claimed.
\end{proof}

\begin{rem}
	Notice that if $f(w,z)$ is $d$-regular for a positive integer value $k$, then the assumption $f_{m,n}=0$ for $n>km$ holds automatically.
\end{rem}

\subsection{The Chern limit}
The above motivates
\begin{defn}
	Suppose a series $f(w,z)$ is $d$-regular. Its \emph{Chern limit} $f_\chern(w)$ is the series on the right hand side of \eqref{eq:limit}.
\end{defn}

In relation to our series $H_{d_1,d_2}$ we have
\begin{prop}\label{prop:properties}
	For $d_1,d_2\geq -1$ such that $d=-d_1-d_2\geq 0$ the series
	\[
	H_{d_1,d_2,k}(w,z) := H_{d_1,d_2}(w;\underbrace{z,\ldots,z}_{k})
	\]
	 is $d$-regular. The series
	\[
	H_{-1,-1,k}(w,z) + (\Li_{3}(w) + k \Li_{3}(z)),
	\]
	\[
	H_{-1,0,k}(w,z) - \frac12 (\Li_{2}(w) + k \Li_{2}(z)),
	\]
	\[
	H_{-1,1,k}(w,z) + \frac1{12} (\Li_{1}(w) + k \Li_{1}(z)),
	\]
	\[
	H_{0,0,k}(w,z) + \frac14 (\Li_{1}(w) + k \Li_{1}(z)),
	\]
	are symmetric.
\end{prop}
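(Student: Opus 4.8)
The two assertions have rather different characters, so the plan is to dispatch the symmetry statements directly from Theorem~\ref{thm:symmetry}, and then to establish regularity via the Chern substitution together with the regularity criterion proved in the Lemma above.

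For the symmetry I would specialize the functional equation of Theorem~\ref{thm:symmetry} to $z_1=\cdots=z_k=z$. Then $\sum_{i=1}^k\Li_{1-d_1-d_2}(z_i)$ becomes $k\,\Li_{1-d_1-d_2}(z)$, so $\wt H_{d_1,d_2}(w;z,\ldots,z)$ equals $H_{d_1,d_2,k}(w,z)+\frac{B_{d_1+1}B_{d_2+1}}{(d_1+1)!(d_2+1)!}\bigl(\Li_{1-d_1-d_2}(w)+k\,\Li_{1-d_1-d_2}(z)\bigr)$, and the relation $\wt H_{d_1,d_2}(w;z_1,\ldots)=\wt H_{d_1,d_2}(w^{-1};wz_1,\ldots)$ specializes to exactly the symmetry $f(w,z)=f(w^{-1},wz)$ of Definition~\ref{defn:regularity}. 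It then remains only to read off the Bernoulli prefactors in the four relevant cases, using $B_0=1$, $B_1=-\tfrac12$, $B_2=\tfrac16$: the case $(d_1,d_2)=(-1,-1)$ gives coefficient $B_0^2=1$ with $\Li_3$; $(-1,0)$ gives $B_0B_1=-\tfrac12$ with $\Li_2$; $(-1,1)$ gives $\tfrac{B_0B_2}{2}=\tfrac1{12}$ with $\Li_1$; and $(0,0)$ gives $B_1^2=\tfrac14$ with $\Li_1$. These are precisely the four corrected series in the statement, so the symmetry part is a bookkeeping verification.

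For the regularity I would fix a positive integer $k$ (permissible by Remark~\ref{univalpha}) and apply the Lemma characterizing $d$-regularity, whose two hypotheses must be checked for $f=H_{d_1,d_2,k}$ with $d=-d_1-d_2\ge 0$. The hypothesis $f_{m,n}=0$ for $n>km$ follows because each box of $\lambda$ contributes a factor $\prod_{i=1}^k(1-q^{c(\square)}t^{r(\square)}z)$ of degree $k$ in $z$, so the $w^m$-coefficient of $\Omega(w;z,\ldots,z;q,t)$ is a polynomial of degree $\le km$ in $z$; passing to the logarithm and extracting the coefficient of $t_1^{d_1}t_2^{d_2}$ preserves this degree bound. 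The central hypothesis is that $g(w,\veps):=H_{d_1,d_2,k}\bigl(w\veps^{2-k}(1+\veps)^k,\tfrac1{1+\veps}\bigr)$ lies in $\veps^{d}\BC[[w,\veps]]$. Here I would use the Chern substitution \eqref{eq:chern substitution} and count orders in $\veps$ box-by-box: each box contributes $\veps^{2-k}$ from the prefactor and $\veps^{k}$ from the $k$ numerator factors $1+\veps-e^{\veps(\cdots)}$, against $\veps^{2}$ from the two denominator differences $e^{\veps(\cdots)}-e^{\veps(\cdots)}$, so the orders cancel and, for generic $t_1,t_2$, the substituted $\Omega$ is a power series in $\veps$ with constant term $1$. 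Its logarithm is then again a power series in $\veps$, and since substituting $q=e^{\veps t_1}$, $t=e^{\veps t_2}$ into \eqref{eq:log expansion} multiplies $H_{d_1,d_2}$ by $\veps^{d_1+d_2}$, the $t_1^{d_1}t_2^{d_2}$-coefficient of this logarithm equals $\veps^{d_1+d_2}g(-w,\veps)$ and carries no negative powers of $\veps$; this forces $g\in\veps^{d}\BC[[w,\veps]]$. With both hypotheses in hand, the Lemma produces the polynomials $p_m$ of degree $\le 2m-d$ and hence the claimed $d$-regularity.

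I expect the main obstacle to be this last hypothesis, $g\in\veps^{d}\BC[[w,\veps]]$: the pole-freeness of the individual summands in \eqref{eq:chern substitution} is manifest, but one must verify that it survives both the passage to the logarithm and the isolation of a single $(t_1,t_2)$-coefficient, so that $g$ is a genuine power series in $\veps$ vanishing to the exact order $d$ rather than merely admitting a finite limit after multiplication by $\veps^{-d}$. Finally, to cover the complex or formal parameter $k$ allowed in Definition~\ref{defn:regularity}, I would invoke the fact (Remark~\ref{univalpha}) that the coefficients $f_{m,n}$ are universal polynomials in $k$, so that $d$-regularity established for all sufficiently large integers $k$ propagates to $k$ formal.
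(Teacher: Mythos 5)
Your proof is correct and follows exactly the route the paper intends: the paper states this proposition without a formal proof, relying on the specialization $z_1=\cdots=z_k=z$ of Theorem~\ref{thm:symmetry} (with the same Bernoulli bookkeeping you perform) for symmetry, and on the observation that the right-hand side of \eqref{eq:from H to Hc} has no pole at $\veps=0$ (clear from \eqref{eq:chern substitution}) combined with the regularity Lemma for $d$-regularity. Your write-up in fact supplies more detail than the paper does, in particular on the point that pole-freeness survives taking the logarithm and extracting a single $t_1^{d_1}t_2^{d_2}$-coefficient, which is precisely the step the paper treats as evident.
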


\begin{rem}\label{rem:recursion}
These constraints uniquely determine $H_{d_1,d_2}$ when $d=-d_1-d_2>0$. To see this, suppose we want to determine the coefficients of $w^m z^n$ for a fixed $m$. By the regularity property it is sufficient to determine these coefficients for $n=0,1,\ldots,2m-d$ since these many values completely determine a polynomial of degree at most $2m-d$. But if $n$ is one of those arguments then $n<2m$. By the symmetry property the coefficient of $w^m z^n$ is determined by the coefficient of $w^{n-m} z^n$ where $n-m<m$, so we can recursively determine all the coefficients. In the case $d=0$ the only terms whose coefficients are not determined from the recursion are $w^m z^{2m}$, and these coefficients can be arbitrary.
\end{rem}

\subsection{Regular symmetric series}\label{regsymser}
It should be clear from Remark \ref{rem:recursion} that a symmetric $d$-regular series with $d>0$ must be zero. Recall that regular means $0$-regular.
\begin{notation} We will in future write
$D_z$ for $z\frac{\partial}{\partial z} $ and $D_w$ for $w\frac{\partial}{\partial w} $.
\end{notation}

\begin{prop}\label{CDEFkprop}
	The following series are symmetric and regular:
	\[
	\CC_k(w,z):=D_z D_w\left(D_z-D_w\right) H_{-1,-1,k}(w,z),
	\]
	\[
	\CD_k(w,z):=D_z\left(H_{-1,0,k}(w,z) + \frac12 D_z H_{-1,-1,k}(w,z)\right),
	\]
	\[
	\CE_k(w,z):=H_{-1,1,k}(w,z) + \frac{1}{12} \left(D_w\left(D_z-D_w\right)-\left(D_z\right)^2\right)  H_{-1,-1,k}(w,z),
	\]
	\[
	\CF_k(w,z):=H_{0,0,k}(w,z) + \frac{1}{4} \left(D_w\left(D_z-D_w\right)-\left(D_z\right)^2\right)  H_{-1,-1,k}(w,z).
	\]
\end{prop}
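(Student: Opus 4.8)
The plan is to reduce both assertions to Proposition~\ref{prop:properties} by analyzing how $D_z=z\frac{\partial}{\partial z}$ and $D_w=w\frac{\partial}{\partial w}$ interact separately with symmetry and with $d$-regularity, and then checking that the specific combinations in the statement are engineered so that both properties survive. Throughout I write $\widetilde H_{d_1,d_2,k}$ for the symmetric series produced by Proposition~\ref{prop:properties}, so that $H_{-1,-1,k}=\widetilde H_{-1,-1,k}-\Li_3(w)-k\Li_3(z)$, $H_{-1,0,k}=\widetilde H_{-1,0,k}+\frac12(\Li_2(w)+k\Li_2(z))$, $H_{-1,1,k}=\widetilde H_{-1,1,k}-\frac1{12}(\Li_1(w)+k\Li_1(z))$, and $H_{0,0,k}=\widetilde H_{0,0,k}-\frac14(\Li_1(w)+k\Li_1(z))$.

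First I would settle symmetry. Let $\tau$ be the involution $(\tau f)(w,z)=f(w^{-1},wz)$, so $f$ is symmetric iff $\tau f=f$. A chain-rule computation gives the conjugation relations $\tau D_z\tau=D_z$ and $\tau D_w\tau=D_z-D_w$, hence $\tau\,P(D_z,D_w)\,\tau=P(D_z,D_z-D_w)$ for any polynomial $P$. Therefore $P(D_z,D_w)$ sends symmetric series to symmetric series whenever $P(D_z,D_w)=P(D_z,D_z-D_w)$, i.e. whenever $P$ is invariant under $D_w\mapsto D_z-D_w$. One checks directly, using that $D_z$ and $D_w$ commute, that both relevant operators are of this kind: $D_zD_w(D_z-D_w)$ is sent to $D_z(D_z-D_w)D_w$, and $D_w(D_z-D_w)-D_z^2$ to $(D_z-D_w)D_w-D_z^2$, each equal to the original; the operators $D_z$, $D_z^2$ are trivially invariant as well.

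Second I would record the effect on regularity. If $f$ is $d$-regular with row polynomials $p_m$, then $D_zf$ has row polynomials $x\,p_m(x)$ and is $(d-1)$-regular, while $D_wf$ has row polynomials $m\,p_m(x)$ and stays $d$-regular; moreover $d$-regular implies $d'$-regular for $d'\le d$, and sums of $d$-regular series are $d$-regular. Since Proposition~\ref{prop:properties} gives that $H_{-1,-1,k}$, $H_{-1,0,k}$ are $2$- and $1$-regular and $H_{-1,1,k}$, $H_{0,0,k}$ are $0$-regular, counting $D_z$-factors gives $0$-regularity in each case: in $\CC_k$ the term $D_z^2D_wH_{-1,-1,k}$ is $0$-regular and $D_zD_w^2H_{-1,-1,k}$ is $1$-regular; in $\CD_k$ the inner bracket is $1$-regular and the outer $D_z$ brings it to $0$; in $\CE_k$ and $\CF_k$ the $D_z^2$-term of the operator supplies the $0$-regular contribution while the remaining terms are at least $0$-regular.

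Finally I would match the raw $H_{d_1,d_2,k}$ against the symmetric $\widetilde H_{d_1,d_2,k}$, using $D_z\Li_d(z)=\Li_{d-1}(z)$, $D_w\Li_d(w)=\Li_{d-1}(w)$, and the fact that $D_z$ annihilates functions of $w$ alone and $D_w$ those of $z$ alone. The polylogarithm corrections then cancel exactly: for example $(D_w(D_z-D_w)-D_z^2)(\Li_3(w)+k\Li_3(z))=-\Li_1(w)-k\Li_1(z)$, which is precisely what cancels the correction term of $H_{-1,1,k}$ after substitution, leaving $\CE_k=\widetilde H_{-1,1,k}+\frac1{12}(D_w(D_z-D_w)-D_z^2)\widetilde H_{-1,-1,k}$, a sum of symmetric series by the first step; in the same way $\CC_k=D_zD_w(D_z-D_w)\widetilde H_{-1,-1,k}$ (the two $\Li_3$ terms are killed by $D_zD_w$), $\CD_k=D_z\widetilde H_{-1,0,k}+\frac12 D_z^2\widetilde H_{-1,-1,k}$, and $\CF_k=\widetilde H_{0,0,k}+\frac14(D_w(D_z-D_w)-D_z^2)\widetilde H_{-1,-1,k}$. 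Combining the three steps yields that each of $\CC_k,\CD_k,\CE_k,\CF_k$ is symmetric and regular. I expect the main obstacle to be purely bookkeeping: pinning down the conjugation relation $\tau D_w\tau=D_z-D_w$ (both the sign and the emergence of $D_z$ matter) and verifying that the precise coefficients $\frac12,\frac1{12},\frac14$ and the polylogarithm normalizations of Proposition~\ref{prop:properties} are exactly those making operator-invariance and the polylog cancellations hold simultaneously. There is no analytic difficulty; the point is that these combinations are built to strip away both the non-symmetric polylogarithm tails and the over-regular part of $H_{-1,-1,k}$ at once.
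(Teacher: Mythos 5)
Your proposal is correct and follows essentially the same route as the paper's (much terser) proof: show that the relevant operators in $D_z$, $D_w$ respect symmetry and lower regularity appropriately, then verify that the polylogarithm corrections from Proposition \ref{prop:properties} cancel in each combination. Your conjugation relations $\tau D_z\tau=D_z$, $\tau D_w\tau=D_z-D_w$ and the explicit cancellation checks are exactly the details the paper leaves to the reader.
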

\begin{proof}
	The operators $D_z$ and $D_w\left(D_z-D_w\right)$ respect symmetry and send $d$-regular series to $(d-1)$-regular series. It remains to check the polylogarithm corrections to the symmetry of the above linear combinations cancel out.
\end{proof}

\subsection{Verlinde limit}
In order to understand the specialization \eqref{eq:from H to Hv} we need
\begin{defn}
	Suppose $k>2$ is an integer and let $f(w,z)=\sum_{m,n=0}^\infty f_{m,n} w^m z^n$ be a $d$-regular series. The \emph{Verlinde limit} is defined by
	\[
	f_{\verlinde}(w) = \sum_{m=0}^\infty f_{m,km} w^m.
	\]
\end{defn}
We have
\begin{prop}\label{prop VerLim}
	The Verlinde limits of the series $\CC_{k}$, $\CD_{k}$, $\CE_{k}$, $\CF_{k}$ are given as follows:
	\begin{equation}\label{eq:C k verlinde}
\CC_{k\;\verlinde}(w) = 2k(k-1) D_w C_{1,1}^V\left((-1)^{k-1}w\right),
\end{equation}
	\begin{equation}\label{eq:D k verlinde}
\CD_{k\;\verlinde}(w) = -k D_1^V\left((-1)^{k-1}w\right) + k^2 C_{1,1}^V\left((-1)^{k-1}w\right),
\end{equation}
\[
\CE_{k\;\verlinde}(w) = E^V\left((-1)^{k-1}w\right) - \frac16 (k^2-k+1) C_{1,1}^V\left((-1)^{k-1}w\right),
\]
\[
\CF_{k\;\verlinde}(w) = F^V\left((-1)^{k-1}w\right) - \frac12 (k^2-k+1) C_{1,1}^V\left((-1)^{k-1}w\right),
\]
where in $C_{1,1}^V$, $D_1^V$, $E^V$, $F^V$ we need to use $k-1$ instead of $k$.
\end{prop}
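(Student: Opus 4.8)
The plan is to reduce Proposition \ref{prop VerLim} to two inputs: a purely combinatorial rule for how the operators $D_z$, $D_w$ act on the Verlinde limit, and the special $we^V$-homogeneity of the Verlinde partition function $\Omega^V$. I would begin with the combinatorial rule. If $f(w,z)=\sum_{m,n}f_{m,n}w^mz^n$ is any series with $f_{m,n}=0$ whenever $n>km$ --- which holds for every $d$-regular series, hence for all series occurring here --- then directly from $f_\verlinde(w)=\sum_m f_{m,km}w^m$ one reads off
\[
(D_wf)_\verlinde=D_wf_\verlinde,\qquad (D_zf)_\verlinde=k\,D_wf_\verlinde,
\]
the second identity because $D_z$ multiplies the coefficient of $w^mz^n$ by $n$, and $n=km$ along the diagonal surviving in the Verlinde limit. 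As $D_z,D_w$ preserve the vanishing condition, these rules iterate and combine linearly. Feeding the definitions from Proposition \ref{CDEFkprop} through them, and noting that $D_w(D_z-D_w)-D_z^2$ becomes $(k-1)D_w^2-k^2D_w^2=-(k^2-k+1)D_w^2$, yields
\begin{align*}
\CC_{k\,\verlinde}&=k(k-1)D_w^3 H_{-1,-1,k\,\verlinde}, &
\CD_{k\,\verlinde}&=kD_w\Big(H_{-1,0,k\,\verlinde}+\tfrac{k}{2}D_wH_{-1,-1,k\,\verlinde}\Big),\\
\CE_{k\,\verlinde}&=H_{-1,1,k\,\verlinde}-\tfrac1{12}(k^2-k+1)D_w^2H_{-1,-1,k\,\verlinde}, &
\CF_{k\,\verlinde}&=H_{0,0,k\,\verlinde}-\tfrac14(k^2-k+1)D_w^2H_{-1,-1,k\,\verlinde}.
\end{align*}

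Next I would compute the base limits. For $d$-regular $f$ the vanishing $f_{m,n}=0$ for $n>km$ shows that the substitution $w\mapsto(-1)^{k-1}w\veps^{k}$, $z\mapsto\veps^{-1}$ produces only nonnegative powers of $\veps$, so that $\lim_{\veps\to0}f\big((-1)^{k-1}w\veps^{k};\veps^{-1}\big)=f_\verlinde\big((-1)^{k-1}w\big)$. Comparing with \eqref{eq:from H to Hv} taken at rank $k-1$ and all $v_i=0$ gives
\[
H_{d_1,d_2,k\,\verlinde}(w)=(-1)^{d_1+d_2}H^V_{d_1,d_2}\big((-1)^{k-1}w;0,\ldots,0\big),
\]
whence $H_{-1,-1,k\,\verlinde}=C_0^V$, $H_{-1,0,k\,\verlinde}=-D_0^V$, $H_{-1,1,k\,\verlinde}=E^V$, $H_{0,0,k\,\verlinde}=F^V$, all for rank $k-1$ and evaluated at $(-1)^{k-1}w$.

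Finally I would use the shape of $\Omega^V$. Since every box contributes a factor $e^{\sum_i v_i}$ to the numerator, $\Omega^V$ depends on $w$ and $V=\sum_iv_i$ only through the combination $we^V$; hence $H^V_{d_1,d_2}(w;v)=H^V_{d_1,d_2}(we^V;0)$, and expanding in $V$ gives $C_{1,1}^V=\tfrac12D_w^2C_0^V$ and $D_1^V=D_wD_0^V$. Substituting these together with $H_{-1,-1,k\,\verlinde}=C_0^V$, $H_{-1,0,k\,\verlinde}=-D_0^V$, $H_{-1,1,k\,\verlinde}=E^V$, $H_{0,0,k\,\verlinde}=F^V$ into the four formulas above produces exactly the claimed identities; for instance $k(k-1)D_w^3C_0^V=2k(k-1)D_wC_{1,1}^V$ gives the formula for $\CC_{k\,\verlinde}$. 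The one delicate point, and the main thing to get right, is the bridge establishing that the combinatorial Verlinde limit $\sum_m f_{m,km}w^m$ genuinely agrees with the geometric $\veps\to0$ limit defining the Verlinde invariants $C_0^V,D_0^V,E^V,F^V$; this is precisely where $d$-regularity (guaranteeing no negative powers of $\veps$) is used. Granting this, everything else is the bookkeeping indicated above.
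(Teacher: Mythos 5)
Your proof is correct and follows essentially the same route as the paper: the same $D_w$/$D_z$ Verlinde-limit rules reduce all four series to $D_w$-derivatives of $H_{d_1,d_2,k\,\verlinde}$, and the identification with $C_{1,1}^V$, $D_1^V$, $E^V$, $F^V$ (at rank $k-1$, argument $(-1)^{k-1}w$) rests on \eqref{eq:from H to Hv} together with the fact that the $v$-dependence of the Verlinde series enters only through the combination $we^v$. The paper compresses your last two steps (the $v_i=0$ bridge and the $we^V$-homogeneity of $\Omega^V$, which yields $C_{1,1}^V=\tfrac12 D_w^2 C_0^V$ and $D_1^V=D_w D_0^V$) into the single substitution $v_i=v/k$ in \eqref{eq:from H to Hv}; your unpacking is equivalent and, if anything, makes that step more explicit.
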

\begin{proof}
	The Verlinde series $H_{d_1,d_2}^V(w;v_1,\ldots,v_{k})$ depends only on the sum $\sum_{i=1}^k v_k$
	\[
	H_{d_1,d_2}^V(w;v_1,\ldots,v_{k}) = H_{d_1,d_2,k}^V\left(w;\sum_{i=1}^k v_i\right)
	\]
	and from \eqref{eq:from H to Hv} setting $v_i=\frac{v}k$ we obtain
	\[
	H_{d_1,d_2,k}^V(w;v) = (-1)^{d_1+d_2} H_{d_1,d_2,k+1}\left((-1)^{k} w e^v,z\right)_\verlinde.
	\]
	So we have
	\[
	C_{1,1}^V(w) = \frac12 \left(D_w\right)^2 H_{-1,-1,k+1\;\verlinde}\left((-1)^{k} w\right),
	\]
		Using
	\[
	\left(D_w f(w,z)\right)_\verlinde = D_w \left(f(w,z)_\verlinde\right),\quad \left(D_z f(w,z)\right)_\verlinde = k D_w \left(f(w,z)_\verlinde\right)
	\]
	we obtain
	\[
	\CC_{k\;\verlinde}(w) = k(k-1) \left(D_w\right)^3 H_{-1,-1,k\;\verlinde}(w),
	\]
	Combining these identities the statement follows.
\end{proof}

\begin{rem}
	By Remark \ref{rem:recursion} a symmetric regular series $f(w,z)$ is determined by the coefficients of $w^m z^{2m}$. It is not hard to see that the three pieces of information:
	\begin{enumerate}
		\item the coefficients of $w^m z^{2m}$,
		\item the Chern limit,
		\item the Verlinde limit
	\end{enumerate}
must be related by invertible upper-triangular linear transformations independent of $f$. Thus in particular the $4$ Chern series $C_2^C(w), D_1^C(w), E^C(w), F^C(w)$ for $k$ determine the Verlinde series $C_2^V(w), D_1^V(w), E^V(w), F^V(w)$ and vice versa. In Section \ref{sec:symmetric regular functions} we determine these linear transformations precisely.
\end{rem}

\subsection{One more series}
We add the following series to the consideration:
	\[
\CC_k'(w,z):=\left(z_1\frac{\partial}{\partial z_1} z_2\frac{\partial}{\partial z_2} H_{-1,-1}(w;z_1,\ldots,z_k)\right)\Bigg|_{z_1=\cdots=z_k=z}.
\]
By a direct computation one can check that this series is symmetric and regular. By definition \eqref{eq:taylor expansion1} we have
\begin{equation}\label{eq:C prime and C}
\CC_{k}'(w,z) = C_2(w,z) + 2 C_{1,1}(w,z).
\end{equation}

\subsection{Conclusions}
The logarithms of some of the universal functions  $G_i(w,z)$ are linear combinations of regular symmetric series and thus symmetric and regular themselves.
We find from the definitions \eqref{eq:taylor expansion1}--\eqref{eq:taylor expansion4}
\[
\begin{split}
H_{0,0,k}(w,z)&=F(w,z),\quad H_{-1,1,k}(w,z)=E(w,z),\quad
 D_zH_{-1,0,k}(w,z)=-kD_1(w,z),\\
D_z^2H_{-1,-1,k}(w,z)&=2k^2C_{1,1}(w,z)+2\binom{k}{2} C_2(w,z).
\end{split}
\]
Therefore the following series are symmetric and regular
\[\begin{split}
 \CD_k(w,z)&=
-kD_1(w,z)+k^2C_{1,1}(w,z)+\binom{k}{2} C_2(w,z),\\
3\CE_k(w,z)-\CF_k(w,z)&=3E(w,z)-F(w,z).
\end{split}
\]
Thus, by \eqref{GCEF}, the following universal series are   symmetric and regular
\begin{align*}
\log (G_0(w,z)&G_1(w,z))=2C_{1,1}(w,z)+C_2(w,z)={\mathcal C}'_{k}(w,z),\\
 \log G_3(w,z)&=-D_1(w,z)+C_{1,1}(w,z)=\frac{1}{k}\CD_k(w,z)-\frac{k-1}{2}{\mathcal C}'_{k}(w,z),\\
 \log G_4(w,z)&=3\CE_k(w,z)-\CF_k(w,z)+\frac{1}{2} (C_{1,1}(w,z)-D_1(w,z)).
 \end{align*}

\section{Symmetric regular functions}\label{sec:symmetric regular functions}
We completely classify symmetric and regular series by the following:
\begin{thm}\label{thm:symmetric regular}
	Let $k$ be an integer or complex parameter. Suppose a series $f(w,z)\in\BC[[w,z]]$ is symmetric and regular. Then there exists a unique power series $h(y)\in\BC[[y]]$ such that
	\begin{equation}\label{eq:f to h}
	f\left(\frac{u(1-u)^{k-1}}{v(1-v)^{k-1}}, \frac{v}{(1-u)^{k-1}}\right) = h\left(\frac{uv}{(1-u)(1-v)}\right).
	\end{equation}
	Conversely, for any $h(y)\in\BC[[y]]$ there exists a unique power series $f$ such the above identity holds and this $f$ is symmetric and regular.
\end{thm}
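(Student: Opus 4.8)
The plan is to prove both directions at once by explicitly inverting the change of variables and then running a triangularity argument on a distinguished family of coefficients, reducing the analytic content to a single regularity statement about the series $y$ itself. I would work first with a positive integer $k$ and recover the general (integer or complex) case at the end from polynomiality in $k$.

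First I would invert the change of variables. Clearing denominators, the two defining equations are equivalent to the coupled system $u=wz(1-v)^{k-1}$, $v=z(1-u)^{k-1}$, which admits a unique solution $u,v\in\BC[[w,z]]$ with $u\in(w)$ and $v\in z+(w)$, obtained by iteration in the $(w,z)$-adic topology. Then $y:=\frac{uv}{(1-u)(1-v)}$ is divisible by $w$ and has lowest term $wz^2$, so for every $h\in\BC[[y]]$ the series $f:=h(y)$ is a well-defined element of $\BC[[w,z]]$; this constructs the map $h\mapsto f$. For symmetry I would record the crucial elementary fact that interchanging $u$ and $v$ implements the substitution $(w,z)\mapsto(w^{-1},wz)$: one checks directly that the solution of the system attached to $(w^{-1},wz)$ is exactly $(v,u)$, and that $w(v,u)=w^{-1}$, $z(v,u)=wz$. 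Since $y$ is manifestly invariant under $u\leftrightarrow v$, the series $f=h(y)$ satisfies $f(w,z)=f(w^{-1},wz)$, i.e.\ it is symmetric. The same computation shows conversely that symmetry of an arbitrary $f$ is equivalent to the $u\leftrightarrow v$ symmetry of $F(u,v):=f\big(w(u,v),z(u,v)\big)$, which makes sense in $\BC[[u,v]]$ precisely because symmetry forces $f_{m,n}=0$ for $n<m$.

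The heart of the matter is regularity, and this is the step I expect to be the main obstacle. Granting the degree bound $y_{m,n}=0$ for $n>km$, it would suffice (by the criterion following Definition~\ref{defn:regularity}, together with the fact that the Chern substitution $w\mapsto w\veps^{2-k}(1+\veps)^k$, $z\mapsto(1+\veps)^{-1}$ is a ring homomorphism) to show that $y$ is carried into $\BC[[w,\veps]]$ and is divisible by $w$; the same would then hold for every power $y^j$ and hence for $h(y)$. Equivalently, and more robustly for the polynomiality argument below, I would prove the purely algebraic statement that for each $m$ the coefficient $[y]_{w^m}(z)$ is a polynomial divisible by $(1-z)^{(k-2)m}$ with quotient of degree at most $2m$; by the computation inside the proof of the lemma following Definition~\ref{defn:regularity} this statement is exactly regularity (and incorporates the degree bound). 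I would establish it by induction on $m$ directly from the system $u=wz(1-v)^{k-1}$, $v=z(1-u)^{k-1}$, tracking the divisibility of $[u]_{w^m}$ and $[v]_{w^m}$ by powers of $(1-z)$; the exponent bookkeeping, already visible in $[u]_{w^1}=z(1-z)^{k-1}$ and $[y]_{w^1}=z^2(1-z)^{k-2}$, is precisely what cancels the negative powers of $\veps$ produced by the factor $\veps^{2-k}$.

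Finally I would prove the bijection and uniqueness using the distinguished coefficients $f_{m,2m}$. By Remark~\ref{rem:recursion} a symmetric regular series is uniquely determined by, and may take arbitrary values at, the diagonal coefficients $f_{m,2m}$. Since $y=wz^2+\cdots$ forces $y^j$ to have order $\geq j$ in $w$, one has $[y^j]_{w^mz^{2m}}=0$ for $j>m$ and $[y^m]_{w^mz^{2m}}=1$, so $f=\sum_j h_j y^j$ gives $f_{m,2m}=h_m+\sum_{j<m}h_j[y^j]_{w^mz^{2m}}$; the resulting map $(h_m)\mapsto(f_{m,2m})$ is lower triangular and unipotent, hence invertible. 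Given any symmetric regular $f$ I would solve this triangular system for a unique $h$; then $f$ and $h(y)$ are symmetric regular with the same diagonal, hence equal by the uniqueness in Remark~\ref{rem:recursion}, which yields both existence and uniqueness. To pass from positive integers $k$ to an arbitrary integer or complex parameter, I would use that all coefficients of $u,v,y$ are polynomials in $k$ and that the regularity identity $f_{m,n}=(-1)^np_m(n)\binom{km}{n}$ is, for each fixed $m$, an identity of rational functions of $k$ valid for all large integers $k$, hence valid identically by Zariski density; the symmetry and the triangularity argument are insensitive to $k$.
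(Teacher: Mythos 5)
Your proposal is correct, and its core differs from the paper's argument in an interesting way. The frame (inverting the substitution via $u=wz(1-v)^{k-1}$, $v=z(1-u)^{k-1}$; symmetry from the observation that $u\leftrightarrow v$ implements $(w,z)\mapsto(w^{-1},wz)$; and the converse direction via the unipotent-triangular map $(h_m)\mapsto(f_{m,2m})$ combined with Remark \ref{rem:recursion}) coincides with the paper's. Where you genuinely diverge is the regularity step: the paper takes $h=y^a$, computes $f(w,z)|_{w^mz^n}$ in closed form as a double residue, obtaining an explicit product of binomial coefficients and hence an explicit polynomial $p_m(x)$ (equation \eqref{eq: p_m for h}); you instead (i) recast regularity as the statement that $[f]_{w^m}(z)$ is a polynomial divisible by $(1-z)^{(k-2)m}$ with quotient of degree $\leq 2m$, (ii) note this property is closed under products and $w$-adically convergent sums, so regularity of every $h(y)$ reduces to regularity of the single series $y$, and (iii) prove the latter by induction from the defining system. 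Your sketch of (iii) does close: one checks inductively that for $m\geq 1$ both $[u]_{w^m}$ and $[v]_{w^m}$ are polynomials divisible by $(1-z)^{(k-2)m+1}$ of degree at most $km$ and $km+1$ respectively (the extra factor of $(1-z)$ being exactly what survives the divisions by $1-u$ and $1-v$ in $y$), whence $[y]_{w^m}$ is divisible by $(1-z)^{(k-2)m}$ with quotient of degree $\leq 2m$. What the two approaches buy is different: yours is softer and avoids residue calculus entirely, at the cost of the detour through positive integer $k$ plus a Zariski-density/polynomiality argument (which needs a word of care at the countably many rational $k$ where $\binom{km}{n}$ vanishes for some $n\leq 2m$, though the paper's explicit $p_m$ has the same degenerations); the paper's residue computation is heavier but yields the closed formula for the coefficients and for $p_m(x)$, which is not a luxury — it is reused downstream in Section \ref{ssec:approach} and in the proof of Theorem \ref{thm:chern verlinde} to extract the Chern and Verlinde limits, so a reader adopting your proof of this theorem would still need to redo that computation later.
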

\begin{proof}
	It may be worth pointing out that the main difficulty was to find the suitable two-variable substitution above. Once the substitution was discovered using computer experiments the proof is straightforward.

	A term of the form $w^m z^{m+n}$ in $f$ corresponds to the following function in $u,v$:
	\[
	u^m v^n (1-u)^{-(k-1)n} (1-v)^{-(k-1)m},
	\]
	from which it is clear that for a given $h$ there exists a unique $f$ satisfying \eqref{eq:f to h}. Moreover, this $f$ is clearly symmetric. Let us verify that it is also regular. It is sufficient to consider the case $h(y)=y^a$ for some $a\geq 0$. Then we have
	\[
	f\left(\frac{u(1-u)^{k-1}}{v(1-v)^{k-1}}, \frac{v}{(1-u)^{k-1}}\right) = \left(\frac{uv}{(1-u)(1-v)}\right)^a.
	\]
	The coefficient of $w^m z^n$ is given by the double residue
	\[
	\res_{u=v=0} \left(\frac{uv}{(1-u)(1-v)}\right)^a \left(\frac{u}{(1-v)^{k-1}}\right)^{-m} \left(\frac{v}{(1-u)^{k-1}}\right)^{m-n}
	\]
	\[
	 d\log \left(\frac{u}{(1-v)^{k-1}}\right)\wedge d\log \left(\frac{v}{(1-u)^{k-1}}\right)
	\]
	\[
	= \res_{u=v=0} (1-u)^{-a+(n-m)(k-1)} (1-v)^{-a+m(k-1)} u^{a-m} v^{a+m-n}
	\]
	\[
	\left(1-(k-1)^2\frac{uv}{(1-u)(1-v)}\right) \frac{du}{u} \wedge \frac{dv}{v}.
	\]
	So it is explicitly given as follows:
	\[
	f(w,z)|_{w^m z^n} = (-1)^n\binom{-a+(n-m)(k-1)}{m-a} \binom{-a+m(k-1)}{n-m-a}
	\]
	\[
	- (k-1)^2 (-1)^n \binom{-a-1+(n-m)(k-1)}{m-a-1} \binom{-a-1+m(k-1)}{n-m-a-1}
	\]
	\[
	=\frac{(k-2)a(n(k-1)-ak) (-1)^n}{(-a+(n-m)(k-1))(-a+m(k-1))}\binom{-a+(n-m)(k-1)}{m-a} \binom{-a+m(k-1)}{n-m-a}.
	\]
	Suppose $m\geq a$, otherwise the coefficients vanish. Notice that $\binom{-a+(n-m)(k-1)}{m-a}$ is a polynomial in $n$ of degree $m-a$. When we multiply it by $\frac{n(k-1)-ak}{(-a+(n-m)(k-1))}$ it remains a polynomial of the same degree: either $m>a$ and it is clear, or $m=a$ and the factor is $1$. We have
	\[
	\binom{-a+m(k-1)}{n-m-a} = \binom{mk}{n}\frac{(n)_{(m+a)}}{(mk)_{(m+a)}},
	\]
	where $(n)_{(m+a)}$ is a polynomial of degree $m+a$. Thus for
	\begin{equation}\label{eq: p_m for h}
	p_m(x) = \frac{(k-2)a(x(k-1)-ak)}{(-a+(x-m)(k-1))(-a+m(k-1))}\binom{-a+(x-m)(k-1)}{m-a} \frac{(x)_{(m+a)}}{(mk)_{(m+a)}}
	\end{equation}
	the condition in Definition \ref{defn:regularity} is satisfied.

	So we have an injective map that sends arbitrary power series $h$ to the corresponding symmetric regular series $f$. It remains to observe that by choosing $h$ we can achieve arbitrary values for $f(w,z)|_{w^m z^{2m}}$ and using Remark \ref{rem:recursion} conclude that this map is also surjective.
\end{proof}

\subsection{Chern and Verlinde series}
Let us compute the relationship between $h$ and the Chern/Verlinde limits of $f$.
\begin{thm}\label{thm:chern verlinde}
	Suppose $h(y)=\sum_{a=1}^\infty h_a y^a$ corresponds to $f(w,z)$ via Theorem \ref{thm:symmetric regular}. Then we have
	\[
		f_\chern\left(\frac{t}{(1+(k-1)t)^{k-1}}\right) = h\left(\frac{t}{1+(k-1)t}\right),
	\]
\[
f_\verlinde\left((-1)^k \frac{q}{(1+q)^{(k-1)^2}}\right) = h\left(\frac{q}{1+q}\right).
\
\]
Equivalently, we have
\[
f_\chern\left(y(1-(k-1)y)^{k-2}\right) = f_\verlinde\left((-1)^k y (1-y)^{k (k-2)}\right) = h(y).
\]
\end{thm}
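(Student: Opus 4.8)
The plan is to reduce to the monomial case $h(y)=y^a$ and then recognize the two coefficient sums produced by $f_\chern$ and $f_\verlinde$ as explicit powers of the \emph{generalized binomial series}, whose defining functional equation collapses them under the substitutions in the statement. The assignment $h\mapsto f$ of Theorem~\ref{thm:symmetric regular} is linear, and so are $f\mapsto f_\chern$ and $f\mapsto f_\verlinde$. Since the substitutions do not involve $h$, and for each fixed power of $w$ only finitely many monomials $y^a$ contribute (the series attached to $y^a$ begins in degree $w^a$), it suffices to prove for every $a\ge1$ the equivalent ``$y$-forms''
\[
f_\chern\big(y(1-(k-1)y)^{k-2}\big)=y^a,\qquad f_\verlinde\big((-1)^k y(1-y)^{k(k-2)}\big)=y^a,
\]
which differ from the displayed identities only by the elementary substitutions $y=t/(1+(k-1)t)$ and $y=q/(1+q)$. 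For $h=y^a$ the proof of Theorem~\ref{thm:symmetric regular} already supplies the closed form of $f(w,z)|_{w^mz^n}$ and the regularity polynomial $p_m(x)$ of \eqref{eq: p_m for h}, which are the only inputs needed. Throughout I use the generalized binomial series $\mathcal{B}_\theta(z)$, characterized by $\mathcal{B}_\theta(z)=1+z\,\mathcal{B}_\theta(z)^{\theta}$ and satisfying $\mathcal{B}_\theta(z)^{r}=\sum_{j\ge0}\frac{r}{\theta j+r}\binom{\theta j+r}{j}z^{j}$.

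\textbf{Verlinde limit.} Setting $n=km$ in the coefficient formula makes $\binom{-a+m(k-1)}{km-m-a}=1$ and cancels the two linear factors, leaving
\[
f_{m,km}=(k-2)ak\,(-1)^{km}\,\frac{1}{m\theta-a}\binom{m\theta-a}{m-a},\qquad \theta=(k-1)^2.
\]
Writing $m=a+j$ and factoring out $y^a(1-y)^{a(\theta-1)}$ with $z=y(1-y)^{\theta-1}$, the extra sign squares away and $f_\verlinde\big((-1)^ky(1-y)^{\theta-1}\big)$ becomes $(k-2)ak\,y^a(1-y)^{a(\theta-1)}\sum_{j\ge0}\frac{1}{\theta j+a(\theta-1)}\binom{\theta j+a(\theta-1)}{j}z^{j}$. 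This sum equals $\frac{1}{a(\theta-1)}\mathcal{B}_\theta(z)^{a(\theta-1)}$, and since $\theta-1=k(k-2)$ the prefactor $(k-2)ak=a(\theta-1)$ cancels it, giving $f_\verlinde=y^a(1-y)^{a(\theta-1)}\mathcal{B}_\theta(z)^{a(\theta-1)}$.

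\textbf{Chern limit.} By the regularity lemma the coefficient of $w^m$ in $f_\chern$ is $p_m(x)|_{x^{2m}}\,(km)_{(2m)}$. Reading off the top coefficient of \eqref{eq: p_m for h} and using $(km)_{(2m)}/(mk)_{(m+a)}=((k-1)m-a)_{(m-a)}$, this coefficient simplifies to $(k-2)a\,(k-1)^{m-a}\frac{1}{(k-1)m-a}\binom{(k-1)m-a}{m-a}$; with $m=a+j$ and $\tau=k-1$ one obtains $f_\chern(w)=w^a\,\mathcal{B}_{\tau}(\tau w)^{a(\tau-1)}$, the prefactor again cancelling because $\tau-1=k-2$.

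\textbf{Finish and main obstacle.} In both cases the conclusion comes from the functional equation: a direct substitution shows $\mathcal{B}_\theta\big(y(1-y)^{\theta-1}\big)=(1-y)^{-1}$ and $\mathcal{B}_{\tau}\big(\tau y(1-\tau y)^{\tau-1}\big)=(1-\tau y)^{-1}$, so evaluating at $w=y(1-\tau y)^{\tau-1}$ (Chern) and at the Verlinde argument collapses both expressions to $y^a$. The conceptual point, which also explains where the substitutions originate, is that $f_\chern$ and $f_\verlinde$ ought to be $h$ evaluated at the limiting value of $Y=\frac{uv}{(1-u)(1-v)}$ along the degenerations $z\to1$ and $z\to\infty$ of the parametrization in Theorem~\ref{thm:symmetric regular}; however this limit is taken at a degenerate point of the substitution, so I prefer the coefficient-wise route above. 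I expect the main obstacle to be precisely the recognition step: rewriting the hypergeometric-type coefficient as a power of $\mathcal{B}_\theta$ and observing that the stated change of variables sends its argument to the fixed point of $\mathcal{B}_\theta=1+z\,\mathcal{B}_\theta^\theta$. The surrounding simplifications (the cancellation of the prefactors, the falling-factorial ratio, and the vanishing of the squared sign) are routine but must be carried out carefully.
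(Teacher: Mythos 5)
Your proof is correct, and its skeleton coincides with the paper's: reduce by linearity to $h=y^a$ (the paper does this implicitly, you justify the convergence point explicitly), extract the coefficients of $f_\chern$ and $f_\verlinde$ from the top-degree coefficient of $p_m$ in \eqref{eq: p_m for h} and from the evaluation at $n=km$, arriving at exactly the same binomial sums, e.g.\ $f_\chern(w)=(k-2)a\sum_{m\ge a}\frac{(k-1)^{m-a}}{m(k-1)-a}\binom{m(k-1)-a}{m-a}w^m$. Where you diverge is the final identification. The paper finishes by a direct residue computation: it writes $\res_{w=0}w^{-m}\,df_\chern(w)$, substitutes $w=w(t)=t(1+(k-1)t)^{1-k}$, recognizes the right-hand side as $\res_{t=0}w(t)^{-m}\,dy(t)^a$ with $y(t)=t/(1+(k-1)t)$, and concludes $f_\chern(w(t))=y(t)^a$ by matching residues for all $m$ --- a hands-on Lagrange inversion kept entirely self-contained. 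You instead recognize the sums as powers of the generalized binomial series, obtaining the closed forms $f_\chern(w)=w^a\,\mathcal{B}_{k-1}\bigl((k-1)w\bigr)^{a(k-2)}$ and $f_\verlinde\bigl((-1)^k y(1-y)^{k(k-2)}\bigr)=y^a(1-y)^{a k(k-2)}\mathcal{B}_{(k-1)^2}\bigl(y(1-y)^{k(k-2)}\bigr)^{ak(k-2)}$, and then collapse them via the functional equation $\mathcal{B}_\theta=1+z\mathcal{B}_\theta^\theta$, whose solution along $z=y(1-y)^{\theta-1}$ is $(1-y)^{-1}$. The two finishes are avatars of the same inversion principle (the expansion $\mathcal{B}_\theta^r=\sum_j\frac{r}{\theta j+r}\binom{\theta j+r}{j}z^j$ is itself proved by Lagrange inversion), but they buy different things: your route yields $f_\chern$ and $f_\verlinde$ in closed form at an \emph{arbitrary} argument, not only after the composed substitution, and makes transparent why the stated changes of variables are the natural ones (they are the fixed-point substitutions of the functional equation); the paper's route avoids importing any external identity and works uniformly without naming the auxiliary series. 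Your prefactor cancellations $(k-2)ak=a\bigl((k-1)^2-1\bigr)$ and $(k-2)a=a\bigl((k-1)-1\bigr)$, the falling-factorial ratio $(km)_{(2m)}/(km)_{(m+a)}=\bigl((k-1)m-a\bigr)_{(m-a)}$, and the disappearance of $(-1)^{km}$ against the sign in the substitution all check out.
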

\begin{proof}
Suppose $h(y)=y^a$. By \eqref{eq: p_m for h}, the top degree coefficient of $p_m(x)$ is given by
\[
\frac{(k-2) (k-1)^{m-a} a}{(-a+m(k-1)) (m-a)! (mk)_{(m+a)}}.
\]
To obtain the coefficient of $f_\chern(w)$ we need to multiply it by $(mk)_{(2m)}$. So we obtain
\[
f_\chern(w) = (k-2) a \sum_{m=a}^\infty \frac{(k-1)^{m-a}}{mk-m-a} \binom{mk-m-a}{m-a} w^m
\]
\[
= a \sum_{m=a}^\infty \frac{(k-1)^{m-a}}{m} \binom{mk-m-a-1}{m-a} w^m.
\]
Using residues, this can be written as follows:
\[
\res_{w=0} \left(D_w f_\chern(w)\right) w^{-m} \frac{dw}{w} = a \res_{t=0} (1+(k-1)t)^{mk-m-a-1} t^{a-m} \frac{dt}{t}.
\]
The left hand side can be written as
\[
\res_{w=0} w^{-m} d f_\chern(w).
\]
Using $y(t)=\frac{t}{1+(k-1)t}$ and $w(t)=\frac{t}{(1+(k-1)t)^{k-1}}$ the right hand side can be written as
\[
\res_{t=0} w(t)^{-m} d y(t)^a.
\]
So we have
\[
\res_{t=0} w(t)^{-m} d f_\chern(w(t)) = \res_{t=0} w(t)^{-m} d y(t)^a,
\]
and since this holds for every $m\geq 1$, we conclude $f_\chern(w(t)) = y(t)^a = h(y(t))$. Since this holds for every $a$, the statement holds for every $h$.

For the coefficient of $f_\verlinde(w)$ we simply set $n=km$ in the formula for $f(w,z)|_{w^m z^n}$ and obtain
\[
\frac{k(k-2) a (-1)^{km}}{-a+m(k-1)^2} \binom{-a+m(k-1)^2}{m-a}.
\]
So
\[
f_\verlinde(w) = k (k-2) a \sum_{m=a}^\infty \frac{(-1)^{km}}{m(k-1)^2-a} \binom{m(k-1)^2-a}{m-a} w^m
\]
\[
 = a \sum_{m=a}^\infty \frac{(-1)^{km}}{m} \binom{m(k-1)^2-a-1}{m-a} w^m.
\]
Then the proof is completed analogously to the Chern case.
\end{proof}

\subsection{The Verlinde-Segre correspondence}
The Verlinde-Segre correspondence Corollary \ref{VerSegCor}, now follows quite easily.
By the results of Section \ref{regsymser}, the series $G_0(w,z)G_1(w,z)$, $G_3(w,z)$ and $G_4(w,z)$ are symmetric and regular.
Therefore  Theorem \ref{thm:symmetric regular} implies that with the variable changes
$$
w=\frac{u(1-u)^{k-1}}{v(1-v)^{k-1}},\quad z=\frac{v}{(1-u)^{k-1}},\quad y=\frac{uv}{(1-u)(1-v)},
$$
 we can write
 $$
 G_0(w,z)G_1(w,z)=h_1(y),\quad G_3(w,z)=h_2(y),\quad G_4(w,z)=h_3(y)
 $$
  for power series $h\in \BC[[y]]$. By Theorem \ref{thm:chern verlinde}
we have with $r=k-1$ and  $x=-y(1-(k-1)y)^{k-2}$, $t=-y(1-y)^{r^2-1}$ that
 $$B_1(t)=A_0(x)A_1(x)=h_1(y).\quad B_3(t)=A_3(x)=h_2(y),\quad B_4(t)=A_4(x)=h_3(y).$$
 This shows Corollary \ref{VerSegCor}.

\section{Determination of $G_0(w,z)$, $G_1(w,z)$, $G_2(w,z)$, $G_3(w,z)$}

\subsection{An approach}\label{ssec:approach}
If $\wt f(w,z)$ is a $1$-regular series, then $f(w,z)=D_z \wt f(w,z)$ is a regular series. Let $p_m(x)$ be the polynomials corresponding to $f$ via Definition \ref{defn:regularity}. Then we have
\[
\wt f(w,0) = \sum_{m=1}^\infty w^m \lim_{x\to 0} \frac{p_m(x)}{x}.
\]
By passing to the limit in \eqref{eq: p_m for h} we obtain
\[
\wt f(w,0) = - \sum_{a,m}
\frac{a^2 (k-2)}{m} \frac{(m+a-1)_{(2a-1)}}{(m(k-1)+a)_{(2a+1)}} h_a w^m,
\]
where $h=\sum_a h_a y^a$ corresponds to $f$ via Theorem \ref{thm:symmetric regular}. We will use this identity in reverse: $\wt f(w,0)$ will be given, and we will check that our candidate for $h$ is correct.

\subsection{Explicit expressions I}
\begin{prop}\label{prop:C k explicit}
	\begin{enumerate}
	\item
	The series $\CC_k(w,z)$ via Theorem \ref{thm:symmetric regular} corresponds to
	\[
	h(y) = - \frac{k(k-1) y}{1-(k-1)^2y},
	\]
	\item
	the series $\CC_k'(w,z)$ via Theorem \ref{thm:symmetric regular} corresponds to
	\[
	h(y) = \log(1-y).
	\]
	\end{enumerate}
\end{prop}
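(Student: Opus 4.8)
\emph{Setup.} Both $\CC_k$ and $\CC_k'$ are symmetric and regular (Proposition~\ref{CDEFkprop} and the line preceding \eqref{eq:C prime and C}), so by Theorem~\ref{thm:symmetric regular} each determines a unique $h(y)=\sum_a h_a y^a$. The plan is to run the identity of Section~\ref{ssec:approach} in reverse: if $\wt f$ is the $1$-regular antiderivative of a regular series $f$ (so $D_z\wt f=f$), then $h$ forces
\[
\wt f(w,0)=-\sum_{a,m}\frac{a^2(k-2)}{m}\frac{(m+a-1)_{(2a-1)}}{(m(k-1)+a)_{(2a+1)}}\,h_a\,w^m .
\]
So I will compute $\wt f(w,0)$ independently and check that the proposed $h$ reproduces it.

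\emph{The common input.} Everything rests on the value of $H_{-1,-1,k}$ at $z=0$. Setting all $z_i=0$ collapses the functional equation \eqref{eq:identity} to its $\mu=\varnothing$ term (because $\wt H_\mu[0]=\delta_{\mu,\varnothing}$), giving $\Omega(w;0,\dots,0;q,t)=\pExp\left[-\frac{w}{(1-q)(1-t)}\right]$ and, on extracting the $t_1^{-1}t_2^{-1}$ coefficient, $H_{-1,-1,k}(w,0)=-\Li_3(w)$. (The same follows from Proposition~\ref{prop:properties}: $H_{-1,-1,k}(w,z)+\Li_3(w)+k\Li_3(z)$ is symmetric, so its restriction to $z=0$ is invariant under $w\mapsto w^{-1}$ and hence constant $=0$.) Note this is independent of $k$.

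\emph{Part (1).} I take $\wt f=D_w(D_z-D_w)H_{-1,-1,k}$, which is $1$-regular with $D_z\wt f=\CC_k$. Since $(D_zH_{-1,-1,k})(w,0)=0$, the mixed term drops and
\[
\wt f(w,0)=-D_w^2H_{-1,-1,k}(w,0)=D_w^2\Li_3(w)=\Li_1(w)=-\log(1-w).
\]
Writing the candidate as $h_a=-k(k-1)^{2a-1}$ and substituting into the reverse formula, the equality $\wt f(w,0)=\sum_{m\ge1}w^m/m$ is equivalent to
\[
k(k-2)\sum_{a=1}^{m}a^2(k-1)^{2a-1}\frac{(m+a-1)_{(2a-1)}}{(m(k-1)+a)_{(2a+1)}}=1\qquad(m\ge1),
\]
a terminating sum (the factor $(m+a-1)_{(2a-1)}$ vanishes once $a>m$). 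I would prove this by creative telescoping / the WZ method, or by a residue evaluation of the hypergeometric sum; clearing denominators already gives $1$ for $m=1,2$, which makes the identity credible. This step is the first genuine obstacle.

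\emph{Part (2), and the main obstacle.} For $\CC_k'$ the candidate is $h(y)=\log(1-y)$, i.e.\ $h_a=-1/a$. The essential difficulty is that $\CC_k'=C_2+2C_{1,1}$ really involves the cross coefficient $C_2$, which is invisible to the diagonal series: one has $\CC_k'=\frac{1}{k^2}D_z^2H_{-1,-1,k}+\frac1k C_2$, whereas $D_z^2H_{-1,-1,k}$ only sees the combination $2k^2C_{1,1}+k(k-1)C_2$. Hence the new ingredient is the two-variable refinement of the partition function: keeping two Macdonald alphabets switched on and expanding \eqref{eq:identity} to bilinear order in them (only $|\mu|\le2$ contribute) yields $C_2(w,z)$, hence $\wt f(w,0)$ for $f=\CC_k'$, after which the verification reduces to a companion hypergeometric identity analogous to the one above. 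Extracting and simplifying this cross term is where I expect the real work to be. As checks independent of that computation: $\CC_k'=\log(G_0G_1)$, so $h=\log(1-y)$ is the same as $G_0G_1=1-y$, which matches the known $A_0A_1=B_1=1-y$ through Theorem~\ref{thm:chern verlinde}; and at first order the direct expansion $\Omega|_{w^1}$ gives $\CC_k'|_{w^1}=-z^2(1-z)^{k-2}$, whose leading coefficient $-1$ agrees with $h_1=-1$.
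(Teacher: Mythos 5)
Your part (1) follows the paper's own argument: the reverse use of Section~\ref{ssec:approach}, the evaluation $\wt f(w,0)=\Li_1(w)$ (the paper deduces it from the symmetry of $\wt f-\Li_1(w)$, you from $H_{-1,-1,k}(w,0)=-\Li_3(w)$; both are fine), and the reduction to
\[
k(k-2)\sum_{a=1}^m a^2(k-1)^{2a-1}\frac{(m+a-1)_{(2a-1)}}{(m(k-1)+a)_{(2a+1)}}=1 ,
\]
which is exactly \eqref{eq:telescoping1}. But you leave this identity unproved: checking $m=1,2$ and appealing to ``creative telescoping / WZ'' is a plan, not a proof. The paper finishes by exhibiting the telescoping certificate explicitly: up to the factor $k(k-2)$, the summand is the difference of consecutive terms of the sequence $a\mapsto (k-1)^{2a-1}(m+a-1)_{(2a-1)}/(m(k-1)+a-1)_{(2a-1)}$, which equals $1$ at $a=1$ and whose continuation vanishes at $a=m+1$, so the sum collapses to $1$. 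This gap is mechanical but must be filled.

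Part (2) is the genuine failure. You correctly diagnose that the diagonal specialization cannot separate $C_2$ from $C_{1,1}$, but your remedy --- extracting $C_2(w,z)$ from the bilinear terms of the two-alphabet expansion of \eqref{eq:identity} --- is never carried out; you explicitly defer ``the real work,'' so part (2) is not proved at all. Moreover, this route is unnecessary: the paper's key idea is that $C_2$ can be bypassed entirely by passing to the Verlinde limit. Since $\Omega^V$ depends only on $\sum_i v_i$, one has $C_2^V(w)=0$, hence the Verlinde limit of $\CC_k'=C_2+2C_{1,1}$ involves only $C_{1,1}^V$. Part (1) together with Theorem~\ref{thm:chern verlinde} gives $\CC_{k\;\verlinde}$, and \eqref{eq:C k verlinde} expresses this as $2k(k-1)D_wC_{1,1}^V\left((-1)^{k-1}w\right)$, so a single integration yields $2C_{1,1}^V\left(-y(1-y)^{k(k-2)}\right)=\log(1-y)$. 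Finally, because $\CC_k'$ is symmetric and regular, Theorem~\ref{thm:chern verlinde} says it is determined by its Verlinde limit alone, forcing $h(y)=\log(1-y)$. Your closing ``check'' via $G_0G_1=B_1=1-y$ is actually adjacent to this argument, but turning it into a proof requires precisely the observation $C_2^V=0$ and the fact that a symmetric regular series is recovered from its Verlinde limit --- neither of which appears in your proposal.
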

\begin{proof}

	{\it(i)} We apply Section \ref{ssec:approach} to the situation
	\[
	\wt f(w,z) =  D_w\left(D_z-D_w\right) H_{-1,-1,k}(w,z),\qquad f(w,z) = \CC_k(w,z).
	\]
	The series $\wt f(w,z)-\Li_1(w)$ is symmetric, which implies $\wt f(w,0) = \Li_1(w)$. So it is sufficient to verify that for every $m>0$. we have
	\[
	-\sum_{a=1}^m \frac{a^2 (k-2)}{m} \frac{(m+a-1)_{(2a-1)}}{(m(k-1)+a)_{(2a+1)}} h_a = \frac{1}{m},
	\]
	where $h_a=-k (k-1)^{2a-1}$ is the $a$-th coefficient of $h$. So we need to verify
	\begin{equation}\label{eq:telescoping1}
	 k(k-2) \sum_{a=1}^m a^2 (k-1)^{2a-1} \frac{(m+a-1)_{(2a-1)}}{(m(k-1)+a)_{(2a+1)}} = 1.
	\end{equation}
	From
	\[
	(k-1)^{2a-1} \frac{(m+a-1)_{(2a-1)}}{(m(k-1)+a-1)_{(2a-1)}} - (k-1)^{2a+1} \frac{(m+a)_{(2a+1)}}{(m(k-1)+a)_{(2a+1)}}
	\]
	\[
	 = k (k-2) a^2 (k-1)^{2a-1} \frac{(m+a-1)_{(2a-1)}}{(m(k-1)+a-1)_{(2a+1)}}
	\]
	we see that the sum in \eqref{eq:telescoping1} is telescoping and the result is precisely $1$.

{\it(ii)} By Theorem \ref{thm:chern verlinde} part {\it(i)}  implies
\[
\CC_{k\;\verlinde}\left((-1)^k y(1-y)^{k(k-2)}\right) = -\frac{k(k-1) y}{1-(k-1)^2 y}.
\]
Using \eqref{eq:C k verlinde} we obtain
\begin{align*}
2 C_{1,1}^V\left(-y(1-y)^{k(k-2)}\right) &= - \int \frac{y}{1-(k-1)^2 y} d \log \left(y(1-y)^{k(k-2)}\right)= \log(1-y).
\end{align*}
Recall that $C_2^V(w)=0$. So by \eqref{eq:C prime and C}, the above gives the Verlinde specialization of the symmetric regular series $\CC_k'(w,z)$. Theorem \ref{thm:chern verlinde} implies that with the changes of variables \eqref{varchange} and $y=\frac{uv}{(1-u)(1-v)}$ we have
\[
\CC_{k}'\left(w,z\right) = \CC_{k\;\verlinde}'\left((-1)^k y(1-y)^{k(k-2)}\right) = \log(1-y).
\]
\end{proof}

\subsection{Explicit expressions II}
\begin{prop} \label{prop:D k explicit}
	Let $k\geq 1$. The series $\CD_k(w,z)$ via Theorem \ref{thm:symmetric regular} corresponds to $h(y)=\sum_{a=1}^\infty h_a y^a$ where
	\[
	h_a =-\frac{k}{2a} \left(\frac{x^{k-1}-x^{1-k}}{x-x^{-1}}\right)^{2a}\Bigg|_{x^0}.
	\]
\end{prop}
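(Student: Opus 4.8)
The plan is to compute the Verlinde limit of $\CD_k$ explicitly and then invert Theorem~\ref{thm:chern verlinde} to recover the power series $h(y)$. The advantage of working through the Verlinde limit is that $\CD_{k\;\verlinde}(w)$ is expressed by \eqref{eq:D k verlinde} in terms of the concrete Verlinde series $C_{1,1}^V$ and $D_1^V$, which are themselves genuine holomorphic Euler characteristics on Hilbert schemes and can be evaluated. First I would use the factorization from \eqref{eq:C k verlinde}, together with part~(ii) of Proposition~\ref{prop:C k explicit}, to pin down $C_{1,1}^V\big((-1)^{k-1}w\big)$: we already know from the proof of Proposition~\ref{prop:C k explicit} that $2C_{1,1}^V\big(-y(1-y)^{k(k-2)}\big)=\log(1-y)$. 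So the genuinely new input needed is the series $D_1^V$, equivalently the single factor $B_3(t)$ of the Verlinde generating function; recall from \eqref{GCEF} that $\log B_3 = C_{1,1}^V - D_1^V$ (with $k-1$ in place of $k$).

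Next I would evaluate $B_3(t)$ from the known closed form for the Verlinde numbers. The statement we want is $B_3(t)=(1-y)^{-r/2}\exp\big(-\sum_{n\ge 1}\tfrac{y^n}{2n}\big[(\tfrac{x^r-x^{-r}}{x-x^{-1}})^{2n}\big]_{x^0}\big)$ from the Corollary with $r=k-1$. Given $B_3$, equation~\eqref{eq:D k verlinde} yields $\CD_{k\;\verlinde}(w)=-kD_1^V+k^2C_{1,1}^V = k(k-1)C_{1,1}^V + k(C_{1,1}^V-D_1^V) = k(k-1)C_{1,1}^V + k\log B_3$. Substituting the two evaluations, I obtain $\CD_{k\;\verlinde}$ as an explicit series in the Verlinde variable. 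Finally I apply the second formula of Theorem~\ref{thm:chern verlinde}, namely $f_\verlinde\big((-1)^k y(1-y)^{k(k-2)}\big)=h(y)$, to read off $h(y)$; differentiating under $D_w$ and matching $y$-coefficients should produce exactly the claimed $h_a=-\frac{k}{2a}\big[(\tfrac{x^{k-1}-x^{1-k}}{x-x^{-1}})^{2a}\big]_{x^0}$, since the constant-term-in-$x$ extraction is precisely the combinatorial shadow of the $y^a$-coefficient of $\log B_3$.

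An alternative, more self-contained route that avoids quoting the final $B_3$ formula is to mimic the telescoping argument of Proposition~\ref{prop:C k explicit}(i): apply the method of Section~\ref{ssec:approach} directly to $\wt f(w,z)=H_{-1,0,k}(w,z)+\tfrac12 D_z H_{-1,-1,k}(w,z)$, so that $f=\CD_k=D_z\wt f$. One knows the symmetric correction to $\wt f$ from Proposition~\ref{prop:properties}, which determines $\wt f(w,0)$, and then one checks that the candidate coefficients $h_a$ reproduce $\wt f(w,0)$ through the identity $\wt f(w,0)=-\sum_{a,m}\frac{a^2(k-2)}{m}\frac{(m+a-1)_{(2a-1)}}{(m(k-1)+a)_{(2a+1)}}h_a w^m$. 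This reduces everything to a hypergeometric coefficient identity in $m$ for each fixed shape of $h_a$.

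The main obstacle I expect is the coefficient-extraction bookkeeping: the quantity $\big[(\tfrac{x^{k-1}-x^{1-k}}{x-x^{-1}})^{2a}\big]_{x^0}$ is a constant term of a Laurent polynomial (a squared Gaussian-binomial-type sum), and showing that it matches either the Verlinde-limit expansion or, in the telescoping route, the hypergeometric sum over $m$ requires a clean generating-function identity rather than a term-by-term check. I would handle this by writing $\frac{x^{r}-x^{-r}}{x-x^{-1}}=\sum_{j=0}^{r-1}x^{r-1-2j}$ and recognizing the constant term of its $2a$-th power as a lattice-walk count, which I can package as a residue; the two residue computations (one from Theorem~\ref{thm:chern verlinde}, one from the explicit $\log B_3$) should then be visibly equal after the change of variables $y\leftrightarrow t$.
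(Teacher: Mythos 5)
Your primary route is circular. The closed formula for $B_3(t)$ (equivalently for $D_1^V$) that you propose to feed into \eqref{eq:D k verlinde} is not available before Proposition \ref{prop:D k explicit} is proved: in the paper's logical order, the explicit formula for $A_3(x)=B_3(t)=G_3(w,z)$ is \emph{deduced from} this proposition, via $\log G_3(w,z)=\frac1k\CD_k(w,z)-\frac{k-1}{2}\CC_k'(w,z)$ together with Corollary \ref{VerSegCor}. The only prior knowledge of $B_3$, from \cite{ellingsrud2001cobordism}, is that $B_3=1$ for $r=-1,0,1$ and its behaviour under $r\mapsto -r$; there is no independently ``known closed form for the Verlinde numbers'' for general $r$ — producing one is precisely the point of the paper. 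So your first two paragraphs assume the conclusion. (The algebra there is otherwise fine: $C_{1,1}^V$ is indeed pinned down by Proposition \ref{prop:C k explicit}(ii), and if $B_3$ \emph{were} known, Theorem \ref{thm:chern verlinde} would let you read off $h$ from the Verlinde limit.)

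Your fallback route is exactly the paper's strategy — apply Section \ref{ssec:approach} to $\wt f(w,z)=H_{-1,0,k}(w,z)+\frac12 D_zH_{-1,-1,k}(w,z)$, use Proposition \ref{prop:properties} to get $\wt f(w,0)=\frac12\Li_2(w)$, and verify that the candidate $h_a$ reproduces this — but it stops exactly where the mathematical substance begins. What must be proved is that for every $m>0$
\[
-\sum_{a=1}^m \frac{a^2(k-2)}{m}\,\frac{(m+a-1)_{(2a-1)}}{(m(k-1)+a)_{(2a+1)}}\, h_a \;=\; \frac{1}{2m^2},
\]
and the paper's proof of this rests on two concrete ingredients absent from your sketch: first, the rewriting of the constant term as a finite alternating sum,
\[
h_a = -k\sum_{i=1}^a (-1)^{a-i}\,\frac{(i(k-1)+a-1)_{(2a-1)}}{(a-i)!\,(a+i)!}\,,
\]
and second, the telescoping identity that for every $0<i<m$
\[
\sum_{a=i}^m (-1)^a a^2\, \frac{(i(k-1)+a-1)_{(2a-1)}\,(m+a-1)_{(2a-1)}}{(a-i)!\,(a+i)!\,(m(k-1)+a)_{(2a+1)}} \;=\; 0,
\]
which, after exchanging the order of summation over $a$ and $i$, kills every cross term and leaves only $a=i=m$, whose contribution is exactly $\frac1{2m^2}$. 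Your proposed way of closing this gap — packaging the constant term as a lattice-walk count and then ``visibly'' matching residues against ``the explicit $\log B_3$'' — both leaves this hypergeometric identity unproved and, in its final matching step, again invokes the formula that is being proven. Without the telescoping identity (or an equivalent proof of the displayed sum identity), the argument is incomplete.
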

\begin{proof}
	First rewrite the constant term as follows:
	\[
	\left(\frac{1-x^{2k-2}}{1-x^2}\right)^{2a}\Bigg|_{x^{2a(k-2)}} = \left(\frac{1-x^{k-1}}{1-x}\right)^{2a}\Bigg|_{x^{a(k-2)}}
	\]
	\[
	= \sum_{i=0}^{2a} (-1)^i \binom{2a}{i} (1-x)^{-2a}\Big|_{x^{a(k-2)-i(k-1)}} = \sum_{i=0}^{a-1} (-1)^i \binom{2a}{i} \binom{ak-i(k-1)-1}{2a-1}.
	\]
	We have truncated the summation  because for $i\geq a$ we have $a(k-2)-i(k-1)\leq -a<0$. Next we replace $i$ by $a-i$ to obtain
	\[
	h_a = -k \sum_{i=1}^a (-1)^{a-i} \frac{(i(k-1)+a-1)_{(2a-1)}}{(a-i)!(a+i)!}.
	\]
	Notice that the latter expression makes sense for arbitrary $k$, so we may use it to define $h_a$ for all $k$. In order to apply Section \ref{ssec:approach}, we let
	\[
	\wt f(w,z) = H_{-1,0,k}(w,z) + \frac12 D_z H_{-1,-1,k}(w,z),
	\]
	so that $f(w,z)=D_z \CD_k(w,z)$. We have $\wt f(w,0)=\frac12 \Li_2(w)$. So it is sufficient to verify that for every $m>0$ we have
	\begin{equation}\label{eq:h a check}
	-\sum_{a=1}^m \frac{a^2 (k-2)}{m} \frac{(m+a-1)_{(2a-1)}}{(m(k-1)+a)_{(2a+1)}} h_a = \frac{1}{2 m^2}.
	\end{equation}
	We claim that for any $0<i<m$ the following holds:
	\begin{equation}\label{eq:telescoping2}
	\sum_{a=i}^m (-1)^a a^2 \frac{(i(k-1)+a-1)_{(2a-1)} (m+a-1)_{(2a-1)}}{(a-i)!(a+i)! (m(k-1)+a)_{(2a+1)}} = 0.
	\end{equation}
	Notice that
	\[
	\frac{(i(k-1)+a-1)_{(2a-1)} (m+a-1)_{(2a-1)}}{(a-i-1)!(a+i-1)! (m(k-1)+a-1)_{(2a-1)}} + \frac{(i(k-1)+a)_{(2a+1)} (m+a)_{(2a+1)}}{(a-i)!(a+i)! (m(k-1)+a)_{(2a+1)}}
	\]
	\[
	= a^2 (m^2-i^2) k (k-2) \frac{(i(k-1)+a-1)_{(2a-1)} (m+a-1)_{(2a-1)}}{(a-i)!(a+i)! (m(k-1)+a)_{(2a+1)}},
	\]
	where for $a=i$ the first summand on the left hand side is understood as $0$. Thus the sum \eqref{eq:telescoping2} telescopes to zero. This implies that when we expand \eqref{eq:h a check} into a sum over $a$ and $i$ only the term with $a=i=m$ survives. This term equals
	\[
	\frac{m^2(k-2)}{m} \frac{(2m-1)!}{(km)_{(2m+1)}} \cdot k  \frac{(km-1)_{2m-1}}{(2m)!} = \frac{1}{2m^2},
	\]
	which is the right hand side of \eqref{eq:h a check}.
\end{proof}
By Section \ref{regsymser} we have $\log G_3(w,z)=\frac{1}{k}\CD_k(w,z)-\frac{k-1}{2}\CC_k'(w,z)$.
Thus by Propositions \ref{prop:C k explicit}, \ref{prop:D k explicit} we get with the changes of variables \eqref{varchange} and $y=\frac{uv}{(1-u)(1-v)}$ that
$$G_3(w,z)=(1-y)^{-\frac{k-1}{2}}\exp\left(\sum_{n=1}^{\infty}-\frac{y^n}{2n}\left.\left(\frac{x^{k-1}-x^{1-k}}{x-x^{-1}}\right)\right|_{x^0}\right).$$

\subsection{Solving differential equations}
  We start with some preliminaries.
 Recall that we denote
 $D_w:=w\frac{\partial}{\partial w}$, $D_z=z\frac{\partial}{\partial z}$. We will also write $D_w^{-1}$ and  $D_z^{-1}$ for the inverse operations with zero integration constants.
  We freely use the changes of variables
 \begin{equation}\label{varchange2} y=\frac{uv}{(1-u)(1-v)}, \qquad w=\frac{u(1-u)^{k-1}}{v(1-v)^{k-1}}, \qquad z=\frac{v}{(1-u)^{k-1}}.\end{equation}
 We need to compute partial derivatives
 $D_w f(u,v)$, $D_z f(u,v)$ of power series  $f(u,v)$ with the result expressed in terms of $u,v$.
 For this we use the following formulas.
\begin{lem}\label{DwDz}
\begin{align*}D_w&=\frac{u(1-u)(1-v)\frac{\partial}{\partial u}-(k-1)uv(1-v)\frac{\partial}{\partial v}}{1-u-v-(k^2-2k)uv},\\
D_z&=\frac{u(1-u)(1-kv)\frac{\partial}{\partial u}+v(1-v)(1-ku)\frac{\partial}{\partial v}}{1-u-v-(k^2-2k)uv}.
\end{align*}
\end{lem}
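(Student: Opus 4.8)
The plan is to treat this as a pure change-of-variables (chain rule) computation, exploiting the fact that $D_w = w\frac{\partial}{\partial w}$ and $D_z = z\frac{\partial}{\partial z}$ are nothing but $\frac{\partial}{\partial(\log w)}$ and $\frac{\partial}{\partial(\log z)}$. First I would pass to logarithmic coordinates. Writing $P:=\log w$ and $Q:=\log z$, the substitution \eqref{varchange2} gives
\[
P = \log u + (k-1)\log(1-u) - \log v - (k-1)\log(1-v), \qquad Q = \log v - (k-1)\log(1-u),
\]
so that $D_w = \frac{\partial}{\partial P}$ at fixed $Q$, and $D_z = \frac{\partial}{\partial Q}$ at fixed $P$.

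Next I would record the Jacobian of $(P,Q)$ with respect to $(u,v)$:
\[
\frac{\partial P}{\partial u} = \frac{1}{u} - \frac{k-1}{1-u}, \quad \frac{\partial P}{\partial v} = -\frac{1}{v} + \frac{k-1}{1-v}, \quad \frac{\partial Q}{\partial u} = \frac{k-1}{1-u}, \quad \frac{\partial Q}{\partial v} = \frac{1}{v}.
\]
The chain rule yields $\frac{\partial}{\partial u} = (\partial_u P)\,D_w + (\partial_u Q)\,D_z$ and $\frac{\partial}{\partial v} = (\partial_v P)\,D_w + (\partial_v Q)\,D_z$, i.e. a linear system
\[
\begin{pmatrix}\frac{\partial}{\partial u} \\ \frac{\partial}{\partial v}\end{pmatrix} = M \begin{pmatrix}D_w \\ D_z\end{pmatrix}, \qquad M = \begin{pmatrix} \partial_u P & \partial_u Q \\ \partial_v P & \partial_v Q\end{pmatrix},
\]
which I then invert to express $D_w, D_z$ through $\frac{\partial}{\partial u}, \frac{\partial}{\partial v}$.

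The only computation with any content is the determinant. A direct expansion shows the two cross terms $\pm\frac{k-1}{v(1-u)}$ cancel, leaving
\[
\det M = \frac{1}{uv} - \frac{(k-1)^2}{(1-u)(1-v)} = \frac{1-u-v - (k^2-2k)uv}{uv(1-u)(1-v)},
\]
where I use $1-(k-1)^2 = -(k^2-2k)$. This numerator is exactly the denominator appearing in the Lemma, so $(\det M)^{-1} = uv(1-u)(1-v)\big/\bigl(1-u-v-(k^2-2k)uv\bigr)$. By Cramer's rule $D_w = (\det M)^{-1}\bigl[(\partial_v Q)\tfrac{\partial}{\partial u} - (\partial_u Q)\tfrac{\partial}{\partial v}\bigr]$ and $D_z = (\det M)^{-1}\bigl[-(\partial_v P)\tfrac{\partial}{\partial u} + (\partial_u P)\tfrac{\partial}{\partial v}\bigr]$. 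Substituting the entries and clearing $\frac1v$ and $\frac{k-1}{1-u}$ against $uv(1-u)(1-v)$ produces the claimed formula for $D_w$ at once. For $D_z$ the final simplifications use $(1-v)-(k-1)v = 1-kv$ and $(1-u)-(k-1)u = 1-ku$ to turn the raw coefficients into $u(1-u)(1-kv)$ and $v(1-v)(1-ku)$, matching the statement.

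I expect no genuine obstacle beyond bookkeeping: the whole Lemma is the inversion of an explicit $2\times2$ matrix. The only places where a favorable cancellation is actually required are the determinant (the vanishing of the cross terms together with the identity $1-(k-1)^2=-(k^2-2k)$) and the two linear-factor simplifications in $D_z$; these are the spots I would verify carefully, since a sign error there would propagate into both displayed formulas.
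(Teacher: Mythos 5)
Your proof is correct and takes essentially the same route as the paper: both invert the $2\times 2$ Jacobian of the change of variables \eqref{varchange2} via the chain rule and finish by direct computation. Your passage to logarithmic coordinates $P=\log w$, $Q=\log z$ is only a cosmetic streamlining of the paper's bookkeeping (it makes the Jacobian entries simple rational functions), not a different argument.
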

 \begin{proof} The chain rule gives
 $$\frac{\partial f}{\partial u}=\frac{\partial f}{\partial w}\frac{\partial w}{\partial u} +\frac{\partial f}{\partial z}\frac{\partial z}{\partial u},\quad
 \frac{\partial f}{\partial v}=\frac{\partial f}{\partial w}\frac{\partial w}{\partial v} +\frac{\partial f}{\partial z}\frac{\partial z}{\partial v}.$$
 Solving for $\frac{\partial f}{\partial w}$ and $\frac{\partial f}{\partial z}$ gives
 $$\frac{\partial f}{\partial w}=\frac{(\frac{\partial z}{\partial u})^{-1}\frac{\partial f}{\partial u}-(\frac{\partial z}{\partial v})^{-1}\frac{\partial f}{\partial v}}{(\frac{\partial z}{\partial u})^{-1}\frac{\partial w}{\partial u}-(\frac{\partial z}{\partial v})^{-1}\frac{\partial w}{\partial v}},\qquad
 \frac{\partial f}{\partial z}=\frac{(\frac{\partial w}{\partial u})^{-1}\frac{\partial f}{\partial u}-(\frac{\partial w}{\partial v})^{-1}\frac{\partial f}{\partial v}}{(\frac{\partial w}{\partial u})^{-1}\frac{\partial z}{\partial u}-(\frac{\partial w}{\partial v})^{-1}\frac{\partial z}{\partial v}}.$$
 and, using \eqref{varchange2}, this gives the lemma by direct computation.
  \end{proof}

  \begin{rem}\label{symrem}
  Denote by ${\mathcal S}$ the involution ${\mathcal S}:g(w,z)\mapsto g(w^{-1},zw),$ which is the identity for symmetric power series.
From the relation
$w=\frac{u(1-u)^{k-1}}{v(1-v)^{k-1}}$, $z=\frac{v}{(1-u)^{k-1}},$
we see
that ${\mathcal S}u=v$ and ${\mathcal S}v=u$, and thus ${\mathcal S}(f(u,v))=f(v,u)$.
\end{rem}

\begin{prop}\label{Hpartial}
\begin{equation}\label{Hwz}
D_wD_zH_{-1,-1,k}(w,z)=-k\log(1-u),
\end{equation}
\begin{equation}\label{Hzz}
D_z^2 H_{-1,-1,k}(w,z)=k\big(\log((1-u)^{k-1}-v)-k\log(1-u)-\log(1-v)\big),
\end{equation}
\begin{equation}\label{Hww}
D_w^2 H_{-1,-1,k}(w,z)=\log(1-u/v)-\log(1-u).
\end{equation}
\end{prop}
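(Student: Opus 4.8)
The plan is to route everything through the symmetric series $\widehat H := H_{-1,-1,k}(w,z)+\Li_3(w)+k\Li_3(z)$ of Proposition~\ref{prop:properties}, which satisfies $\mathcal S\widehat H=\widehat H$ for the involution $\mathcal S$ of Remark~\ref{symrem}. In logarithmic coordinates $\mathcal S$ is the linear map $(\log w,\log z)\mapsto(-\log w,\log w+\log z)$, so on operators it acts by $\mathcal S D_z\mathcal S=D_z$ and $\mathcal S D_w\mathcal S=D_z-D_w$, and in the variables $u,v$ it is the swap $u\leftrightarrow v$. I will also use one regularity consequence: since the polynomial $p_0$ attached to the $2$-regular series $H_{-1,-1,k}$ has degree $\le -2$ it vanishes, so $H_{-1,-1,k}$ has no pure $z$ monomials, i.e. the coefficient of $w^0z^n$ is $0$ for all $n\ge1$. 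All derivatives are converted to $u,v$ by Lemma~\ref{DwDz}, and I abbreviate $\Delta=1-u-v-(k^2-2k)uv$.

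I would prove \eqref{Hwz} first. Because $D_w,D_z$ commute, the definition of $\CC_k$ rearranges to $\CC_k=(D_z-D_w)\,D_wD_zH_{-1,-1,k}$, and by Proposition~\ref{prop:C k explicit}(i) with $y=uv/((1-u)(1-v))$ one has $\CC_k=-k(k-1)uv/\Delta$. A short computation with Lemma~\ref{DwDz} gives that $D_z-D_w$ has $\partial_u$-coefficient $-(k-1)uv(1-u)/\Delta$ and $\partial_v$-coefficient $v(1-v)(1-u)/\Delta$, whence $(D_z-D_w)\bigl(-k\log(1-u)\bigr)=-k(k-1)uv/\Delta=\CC_k$. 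Thus $D_wD_zH_{-1,-1,k}+k\log(1-u)$ lies in $\ker(D_z-D_w)$, which consists of the power series in $wz$; call it $\phi(wz)$. To force $\phi=0$ I apply $\mathrm{id}+\mathcal S$: since $D_wD_z\widehat H=D_wD_zH_{-1,-1,k}$ and $\mathcal S D_wD_z\mathcal S=(D_z-D_w)D_z$, I get $D_wD_zH_{-1,-1,k}+\mathcal S\,D_wD_zH_{-1,-1,k}=D_z^2\widehat H=D_z^2H_{-1,-1,k}-k\log(1-z)$. The left side equals $-k\log(1-u)-k\log(1-v)+\phi(wz)+\phi(z)$, so comparing coefficients of $w^0z^n$ (where $w=0$ means $u=0$, $v=z$) yields $k/n+\phi_n$ on the left and $k/n$ on the right, using that the coefficient of $w^0z^n$ in $D_z^2H_{-1,-1,k}$ is $n^2\cdot 0=0$. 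Hence $\phi=0$, which is \eqref{Hwz}.

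Equation \eqref{Hzz} then falls out of the same symmetrization identity: $D_z^2H_{-1,-1,k}=k\log(1-z)-k\log(1-u)-k\log(1-v)$, and substituting $\log(1-z)=\log((1-u)^{k-1}-v)-(k-1)\log(1-u)$ (from $z=v/(1-u)^{k-1}$) gives the stated formula. For \eqref{Hww} I use the integrability relation $D_z\,D_w^2H_{-1,-1,k}=D_w\bigl(D_wD_zH_{-1,-1,k}\bigr)=D_w\bigl(-k\log(1-u)\bigr)$; a computation with Lemma~\ref{DwDz} shows that both $D_w(-k\log(1-u))$ and $D_z\bigl(\log(1-u/v)-\log(1-u)\bigr)$ equal $ku(1-v)/\Delta$, so $D_w^2H_{-1,-1,k}-\bigl(\log(1-u/v)-\log(1-u)\bigr)$ is annihilated by $D_z$ and hence is a series in $w$ alone. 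Matching $z^0$-parts finishes it: $\mathcal S\widehat H=\widehat H$ forces the $z^0$-coefficient of $\widehat H$ to be constant, so $H_{-1,-1,k}(w,0)=-\Li_3(w)$ and $D_w^2H_{-1,-1,k}(w,0)=-\Li_1(w)=\log(1-w)$, which matches the candidate at $z=0$ (where $u\to0$, $u/v\to w$).

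The one genuinely delicate point is fixing the kernel ambiguity in \eqref{Hwz}: the equation $(D_z-D_w)(\cdot)=\CC_k$ pins $D_wD_zH_{-1,-1,k}$ only up to a power series in $wz$, which is precisely the ``diagonal'' freedom that Remark~\ref{rem:recursion} identifies as undetermined by symmetry and regularity alone. The extra ingredient that kills it is the vanishing of the pure $z$ part of $H_{-1,-1,k}$, fed through the $\mathcal S$-symmetrization. Once \eqref{Hwz} is secured, \eqref{Hzz} and \eqref{Hww} are forced by bookkeeping and the boundary values at $z=0$, so the whole Proposition reduces to the two residue-free verifications $(D_z-D_w)(-k\log(1-u))=\CC_k$ and $D_z(\log(1-u/v)-\log(1-u))=D_w(-k\log(1-u))$ via Lemma~\ref{DwDz}.
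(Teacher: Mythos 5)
Your proof is correct, and it rests on the same skeleton as the paper's: the identity $\CC_k=(D_z-D_w)D_wD_zH_{-1,-1,k}$, the closed form for $\CC_k$ from Proposition \ref{prop:C k explicit}, the two direct computations via Lemma \ref{DwDz}, and the symmetry of $H_{-1,-1,k}+\Li_3(w)+k\Li_3(z)$ together with the vanishing of pure-$z$ terms to pin down integration constants. The difference is organizational, and it is a real one: the paper solves for $D_wD_zH_{-1,-1,k}$ by applying $D_w^{-1}$ to $\CC_k$ and doing coefficient-level bookkeeping with the involution $\mathcal S$ (identifying the diagonal part $\sum_n f_{n,n}z^n=-k\log(1-z)$ from the symmetry of $\widehat H$), and then proves \eqref{Hzz} by a separate explicit integration; you instead verify the candidate $-k\log(1-u)$ against $(D_z-D_w)$, characterize the ambiguity as $\ker(D_z-D_w)=\BC[[wz]]$, and kill it with the single symmetrization identity $D_wD_zH+\mathcal S D_wD_zH=D_z^2\widehat H$, which is justified cleanly by the operator relations $\mathcal S D_z\mathcal S=D_z$, $\mathcal S D_w\mathcal S=D_z-D_w$. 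What this buys you is that \eqref{Hzz} drops out of the same identity at no extra cost, rather than requiring a second $D_w^{-1}$ computation, and you also make explicit the boundary value $H_{-1,-1,k}(w,0)=-\Li_3(w)$ (hence the $-\Li_1(w)$ correction in \eqref{Hww}), which the paper uses without comment. Two minor remarks: your regularity argument that the $w^0 z^n$ coefficients vanish (via $p_0=0$ for a $2$-regular series) is fine but more roundabout than necessary, since $H_{-1,-1,k}(0,z)=0$ holds directly from $\Omega|_{w^0}=1$; and in the symmetrization step one should note, as you implicitly do, that $\mathcal S$ of each term is again a genuine power series in $w,z$ because $\mathcal S u=v$, $\mathcal S(wz)=z$, so the manipulation stays inside $\BC[[w,z]]$.
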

 \begin{proof}
 We have $wz=u/(1-v)^{k-1}$, $ z=v/(1-u)^{k-1}$,
 and so for a power series $h(w,z)\in \BC[[w,z]]$, we find that
 \begin{equation}\label{wzzero} h(0,z)=h(0,v),\quad h(w,0)=h(u/v,0).
 \end{equation}
  To prove \eqref{Hwz}, we use
 \begin{equation}\label{Ckeq}\CC_k(w,z)=(D_z-D_w)D_wD_zH_{-1,-1,k}(w,z).\end{equation}
By  Proposition \ref{prop:C k explicit} we have with the variable change \eqref{varchange2} that
 $$\CC_k(w,z)=-\frac{k(k-1)y}{1-(k-1)^2y}=-\frac{k(k-1)uv}{1-u-v- (k^2-2k)uv}%\qquad y=\frac{uv}{(1-u)(1-v)}.
 $$
Writing
$D_wD_zH_{-1,-1,k}(w,z)=\sum_{n,m} f_{n,m}w^nz^m,$
we get by symmetry of $\CC_k(w,z)$ that
\begin{align*}\CC_k(w,z)&=\sum_{n,m}(m-n)f_{n,m}w^nz^m=\sum_{n,m}(m-n)f_{n,m}w^{m-n}z^m=\sum_{n,m}n f_{m-n,m}w^n z^m.
\end{align*}
Using Remark \ref{symrem} we have
\begin{align*}
D_{w}^{-1}\CC_k(w,z)&=\sum_{\substack{n,m\\ n\ne 0}}f_{m-n,m}w^n z^m=\sum_{\substack{n,m\\n\ne m}}f_{n,m}w^{m-n} z^m={\mathcal S}\Big(\sum_{\substack{n,m\\n\ne m}} f_{n,m}w^{n} z^m \Big).
\end{align*}
As ${\mathcal S}$ is an involution, we get
\begin{align*}{\mathcal S}\Big(D_{w}^{-1}\CC_k(w,z)+\sum_nf_{n,n}z^n\Big)&={\mathcal S}D_{w}^{-1}\CC_k(w,z)+\sum_nf_{n,n}w^nz^n\\
&=\sum_{n,m} f_{n,m}w^{n} z^m=D_wD_zH_{-1,-1,k}(w,z).\end{align*}
We write
$$H_{-1,-1,k}(w,z)+\Li_3(w)+k \Li_3(z)=\sum_{n,m} g_{n,m} w^nz^m.$$
As this is symmetric, and we have by definition $H_{-1,-1,k}(0,z)=0$,  we get
$$\sum_{n}g_{n,n}w^nz^n=\sum_{n}g_{0,n} (wz)^n=k \Li_3(zw).$$
This gives
$$\sum_n f_{n,n}z^{n}=k D_z^2 \Li_3(z)=k \Li_1(z)=-k\log(1-z).$$
 Thus we obtain
$$D_wD_zH_{-1,-1,k}(w,z)={\mathcal S}\Big(D_{w}^{-1}\CC_k(w,z)-k\log(1-z)\Big).$$
Thus the proof of \eqref{Hwz}
is reduced to the explicit identity
$$D_{w}^{-1}\Big(-\frac{(k-1)uv}{1-u-v- (k^2-2k)uv}\Big)=-{\mathcal S}(\log(1-u))+\log(1-z)=-\log(1-v)+\log(1-z).$$
By \eqref{wzzero} we have $\log(1-z)\big|_{w=0}=\log(1-v)|_{w=0}$.
Thus it is enough to see that
$$D_w (\log(1-v))=-\frac{(k-1)uv}{1-u-v- (k^2-2k)uv}.$$
Using Lemma \ref{DwDz}, this follows by a direct computation.

Now we deal with \eqref{Hzz}.
By \eqref{Ckeq} and \eqref{Hwz},  we have
\begin{align*}D_z^2 H_{-1,-1,k}(w,z)&=D_w^{-1}\CC_k(w,z)+D_wD_zH_{-1,-1,k}(w,z)\\&=D_{w}^{-1}\Big(-\frac{k(k-1)uv}{1-u-v- (k^2-2k)uv}\Big)+k\log(1-u)
\end{align*}
Thus it is enough to show the explicit identity
\begin{align*}D_{w}^{-1}\Big(-\frac{(k-1)uv}{1-u-v- (k^2-2k)uv}\Big)=\log\big((1-u)^{k-1}-v\big)-(k-1)\log(1-u)-\log(1-v).
\end{align*}
By \eqref{wzzero} we have the coefficient of $w^0$ of the right hand side is
$ \log(1-v)-\log(1-v)=0.$
Thus to show  \eqref{Hzz}, we only have to see
\begin{align*}-\frac{(k-1)uv}{1-u-v- (k^2-2k)uv}=D_w\big(\log\big((1-u)^{k-1}-v\big)-(k-1)\log(1-u)-\log(1-v)\big),
\end{align*}
which again,  using Lemma  \ref{DwDz}, follows by a direct computation.

Finally we show \eqref{Hww}.
We see that $$D_w^2 H_{-1,-1,k}(w,z)=D_z^{-1}D_w (D_zD_w H_{-1,-1,k}(w,z))-\Li_1(w),$$
thus we are reduced to the proof of the identity
$$D_z^{-1}D_w(-k\log(1-u))=\log(1-u/v)-\log(1-u)-\log(1-w).$$
By \eqref{wzzero}, the left hand side vanishes at $z=0$, and it is enough to see that
$$D_w(-k\log(1-u))=D_z\big(\log(1-u/v)-\log(1-u)\big),$$
which is again a direct computation using Lemma  \ref{DwDz}.
\end{proof}

\subsection{Determining $G_0$, $G_1$, and $G_2$}
By Proposition \ref{prop:C k explicit}  we have
\[
\log(G_0\left(w, z\right) G_1\left(w,z\right))= \CC_{k}'\left(w, z\right) = \log(1-y)=\log\left(\frac{1-u-v}{(1-u)(1-v)}\right).
 \]
By \eqref{Hzz} we have
\begin{align*}G_0(w,z)&=\exp\big(k\CC_k'(w,z)-\frac{1}{k}D_z^2H_{-1,-1,k}(w,z)\big)
=\frac{(1-u-v)^{k}}{(1-v)^{k-1}\big((1-u)^{k-1}-v\big)},
\end{align*}
which also gives
$ G_1(w, z)=\frac{(1-v)^{k-2}((1-u)^{k-1}-v)}{(1-u)(1-u-v)^{k-1}}.$

 Finally we  determine $G_2(w,z)$. Putting $K_k(w,z):=\left(D_w(D_z-D_w)-D_z^2\right)H_{-1,-1,k}(w,z),$ we get
by Proposition \ref{Hpartial} that
\begin{equation}
\label{eq1} \exp\left(K_k(w,z)\right)=\frac{(1-v)^k(1-u)^{k^2-k+1}}{(1-\frac{u}{v})\big((1-u)^{k-1}-v\big)^k}.\end{equation}
%We know
%\[
%\log G_0(w,z)=C_2(w,z),\quad \log\big(G_0(w,z)G_1(w,z)\big)=2C_{1,1}(w,z)+C_2(w,z).
%\]
 By \eqref{GCEF} and the definition of $\CE_k(w,z)$ and $\CF_k(w,z)$ in Proposition \ref{CDEFkprop}, we have
\begin{equation}\label{eq2}
\begin{split}
\log(G_2(w,z))&=24\big(H_{0,0,k}(w,z)-2H_{-1,1,k}(w,z)\big)-4C_{1,1}(w,z)\\
&=24\big(\CF_k(w,z)-2\CE_k(w,z)\big)-2K_k(w,z)-2\log G_1(w,z).
\end{split}
\end{equation}
From \cite{ellingsrud2001cobordism} we have
\[
B_2\left(-y(1-y)^{k(k-2)}\right) = \frac{(1-y)^{(k-1)^2}}{1-(k-1)^2 y}.
\]
By the remarks after \eqref{GCEF}, Proposition \ref{prop:C k explicit}, Theorem \ref{thm:chern verlinde} and Proposition \ref{prop VerLim}, this gives
\[
24(F^V - 2 E^V)\left(-y(1-y)^{k(k-2)}\right) = \left((k-1)^2+2\right)\log(1-y) - \log(1-(k-1)^2 y),
\]
\[
24( \CF_{k\;\verlinde} - 2 \CE_{k\;\verlinde})\left((-1)^k y(1-y)^{k(k-2)}\right) = (1-k^2) \log(1-y) -  \log(1-(k-1)^2 y)
\]
As $( \CF_{k\;\verlinde} - 2 \CE_{k\;\verlinde})$ is symmetric and regular, it follows that
\begin{equation}
\label{eq3}
24\big(\CF_k(w,z)-2\CE_k(w,z)\big)=(1-k^2)\log\left(\frac{1-u-v}{(1-u)(1-v)}\right)-\log\left(\frac{1-u-v-(k^2-2k)uv}{(1-u)(1-v)}\right)
\end{equation}
Thus, combining \eqref{eq1},\eqref{eq2}, \eqref{eq3} and the formula for $G_1(w,z)$, we get
\[
G_2(w,z)=\frac{(1-\frac{u}{v})^2(1-v)^{(k-2)^2}\big((1-u)^{k-1}-v\big)^{2(k-1)} }{(1-u-v)^{(k-1)^2}(1-u)^{k^2-2k}\big(1-u-v-(k^2-2k)uv\big)}.
\]

\section{The missing power series}
%Identifying a zero dimensional subscheme $Z$ with its ideal sheaf $I_Z$, the Hilbert schemes of points $\Hilb_nS$ can be viewed as  moduli spaces of rank $1$ sheaves on the surface $S$. In \cite{GottscheKool} the generalization of the Verlinde and Segre formulas to moduli spaces $M_S^H(\rho,c_1,c_2)$ of sheaves
%of arbitrary rank $\rho$ are studied. In future work we will use the conjectural blowup formulas of \cite{GottscheBlowup} together with a conjectural virtual strange duality to find conjectural Verlinde and Segre formulas in arbitrary rank. Preliminary results have first lead
% to conjecturing the product formula Proposition \ref{prop:B3prod} for $B_3(w)$ and helped to find
%to the formula of Conjecture \ref{Bconj} for
%$B_4(w)$.

We  give evidence for the conjectural formula of Conjecture \ref{Bconj} for $G_4(w,z)$. First we show the product formula  Proposition \ref{prop:B3prod}  for $G_3(w,z)$.
We begin with  a version of the Lagrange inversion formula that we will use.
\begin{prop}\label{prop:inverse1}
	Let $f(y)=y^{-1} + \cdots$ be a Laurent series. Let $g$ be the inverse series of $\frac{1}{f(y)}$. Then we have
	\[
	\frac{g(u)}{u} = \exp\left(\sum_{n=1}^\infty \frac{u^n}{n} (f(y)^n)_{y^0}\right).
	\]
\end{prop}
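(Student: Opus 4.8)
The plan is to reduce the statement to the classical Lagrange inversion formula applied to a logarithmic test function. Write $f(y) = y^{-1}(1 + a_0 y + a_1 y^2 + \cdots)$ and set $\psi(y) := y\,f(y) = 1 + a_0 y + a_1 y^2 + \cdots$, a power series with $\psi(0) = 1$. Then $\frac{1}{f(y)} = \frac{y}{\psi(y)}$ is a power series in $y$ whose linear coefficient is $1$, so its compositional inverse exists and is exactly $g$; equivalently $g$ satisfies the functional equation $g(u) = u\,\psi(g(u))$. In particular $\frac{g(u)}{u} = \psi(g(u))$, and since $\psi(0)=1$ the series $\log\psi(y)$ is a well-defined power series with vanishing constant term, so that $\log\frac{g(u)}{u} = (\log\psi)(g(u))$.

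First I would recall the Lagrange--B\"urmann formula in the form: if $g$ is the compositional inverse of $y\mapsto y/\psi(y)$, then for any formal power series $H$ one has $[u^n] H(g(u)) = \frac{1}{n}[y^{n-1}]\big(H'(y)\,\psi(y)^n\big)$. Applying this with $H = \log\psi$, so that $H' = \psi'/\psi$, gives
\[
[u^n]\log\frac{g(u)}{u} = \frac{1}{n}[y^{n-1}]\big(\psi'(y)\,\psi(y)^{n-1}\big) = \frac{1}{n^2}[y^{n-1}]\frac{d}{dy}\psi(y)^n = \frac{1}{n}[y^n]\psi(y)^n,
\]
where the middle equality uses $\psi'\psi^{n-1} = \frac{1}{n}(\psi^n)'$, and the last equality uses that extracting $[y^{n-1}]$ of a derivative multiplies the $y^n$-coefficient by $n$.

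The final step is to identify the coefficient with the quantity appearing in the statement: since $\psi(y)^n = y^n f(y)^n$, we have $[y^n]\psi(y)^n = [y^0] f(y)^n = (f(y)^n)_{y^0}$. Summing over $n\geq 1$ then yields $\log\frac{g(u)}{u} = \sum_{n=1}^\infty \frac{u^n}{n}(f(y)^n)_{y^0}$, which is exactly the claim after exponentiating. I expect the only point needing genuine care to be the justification that the logarithmic Lagrange inversion is legitimately applicable, namely that $H=\log\psi$ is an honest formal power series (guaranteed by $\psi(0)=1$) and that the residue/coefficient manipulations are purely formal and valid; the remaining algebra is routine. One could alternatively avoid the logarithmic form by applying the standard $H(y)=y^m$ version and resumming, but the computation above is the most direct route.
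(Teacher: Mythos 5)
Your proof is correct, but it takes a somewhat different route from the paper's. The paper proves the identity directly: it takes the logarithmic derivative of both sides, writes the coefficient of $u^{n-1}$ on the left as the residue $\res \frac{g'(u)}{g(u)}u^{-n}\,du$, and then performs the substitution $u=\frac{1}{f(y)}$ (under which $g(u)=y$) to turn that residue into $\res f(y)^n\,\frac{dy}{y}=(f(y)^n)_{y^0}$ --- in effect inlining the residue-substitution argument that underlies Lagrange inversion for exactly the test function it needs. You instead cite the classical Lagrange--B\"urmann formula as a known theorem, apply it with the logarithmic test function $H=\log\psi$ where $\psi(y)=y\,f(y)$, and finish with the elementary manipulations $\psi'\psi^{n-1}=\frac{1}{n}(\psi^n)'$ and $[y^n]\psi^n=[y^0]f^n$; all of these steps check out, including the implicit matching of the (vanishing) constant terms before exponentiating. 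What each approach buys: the paper's argument is self-contained and needs no external input beyond the change-of-variables invariance of formal residues, while yours makes the link to classical Lagrange inversion explicit and keeps all manipulations at the level of power-series coefficients --- at the cost of invoking a black-box theorem whose standard proof is precisely the residue trick the paper uses.
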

\begin{proof}
	Take the logarithmic derivative of both sides. For $n\geq 1$ the coefficient of $u^{n-1}$ on the left is
	\[
	\res \frac{g'(u)}{g(u)}u^{-n} du.
	\]
	Perform the substitution $u=\frac{1}{f(y)}$. Then $g(u)=y$ and the expression above equals
	\[
	\res f(y)^{n} \frac{dy}{y} = (f(y)^n)_{y^0},
	\]
	which is the term on the right.
\end{proof}

\begin{prop}\label{prop:inverse2}
	Let $f(y)=y^{-m}+\cdots$ be a Laurent series for $m\geq 1$. Let $g_1,\ldots,g_m$ be the different branches of the inverse series of $\frac{1}{f(y)}$. Then we have
	\[
	\frac{\prod_{i=1}^m g_i(u)}{u} = \exp\left(\sum_{n=1}^\infty \frac{u^n}{n} (f(y)^n)_{y^0}\right).
	\]
\end{prop}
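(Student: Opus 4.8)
The plan is to imitate the proof of Proposition \ref{prop:inverse1} through logarithmic derivatives, the only new feature being that the single-valued product $P(u):=\prod_{i=1}^m g_i(u)$ assembles the $m$ multivalued branches of the inverse of $1/f$. Taking the logarithmic derivative of the claimed identity, the right hand side contributes $\sum_{n\ge 1} u^{n-1} (f(y)^n)_{y^0}$, while the left hand side gives
\[
\frac{P'(u)}{P(u)} - \frac1u = \sum_{i=1}^m \frac{g_i'(u)}{g_i(u)} - \frac1u.
\]
Since $1/f(y)=y^m+\cdots$, the product $P(u)$ is a power series whose leading term is a nonzero multiple of $u$, so $P'(u)/P(u)-1/u$ is an honest power series, and for $n\ge 1$ its coefficient of $u^{n-1}$ equals
\[
\res_{u=0}\frac{P'(u)}{P(u)}\,u^{-n}\,du=\sum_{i=1}^m \res_{u=0}\frac{g_i'(u)}{g_i(u)}\,u^{-n}\,du.
\]

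First I would justify that $u=1/f(y)$ is a local $m$-to-$1$ branched cover having the $g_i$ as its sheets, so that a residue summed over all branches in $u$ pulls back to a single residue in the uniformizer $y$. Performing the substitution $u=1/f(y)$, on the $i$-th sheet one has $g_i(u)=y$, hence $\frac{g_i'(u)}{g_i(u)}\,du=\frac{dy}{y}$ and $u^{-n}=f(y)^n$, so the $m$ residues assemble into
\[
\res_{y=0} f(y)^n\,\frac{dy}{y} = (f(y)^n)_{y^0},
\]
matching the right hand side term by term. This shows the two sides differ by a multiplicative constant.

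It then remains to pin down the integration constant, i.e. the leading coefficient of $P(u)/u$. Writing $1/f(y)=\eta(y)^m$ with $\eta(y)=y+\cdots$ the unique $m$-th root, the branches are $g_i(u)=\eta^{-1}(\zeta^i u^{1/m})$ for $\zeta$ a primitive $m$-th root of unity, so this leading coefficient is $\prod_{i=0}^{m-1}\zeta^i$. The same description yields an alternative, perhaps cleaner, route: apply Proposition \ref{prop:inverse1} to $\eta^{-1}$ to expand each factor $\eta^{-1}(\zeta^i u^{1/m})$, then use $\sum_{i=0}^{m-1}\zeta^{in}=m$ when $m\mid n$ and $0$ otherwise to collapse the exponential; the surviving terms $n=m\ell$ recombine, via $\eta^{-m}=f$, into $\exp\big(\sum_{\ell\ge 1}\frac{u^\ell}{\ell}(f(y)^\ell)_{y^0}\big)$, while the product of leading factors supplies the prefactor $u$ together with the same constant. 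The main obstacle in either route is the careful handling of the multivaluedness — establishing that the $g_i$ are precisely the $m$ sheets of $u=1/f(y)$ and that symmetric residues over branches descend to residues in $y$ — after which the computation is routine.
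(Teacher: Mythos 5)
Your primary route --- matching logarithmic derivatives by writing the coefficient extraction as a residue in $u$, and then using that the sum of $\frac{g_i'}{g_i}u^{-n}\,du$ over the $m$ sheets of the cover $u=1/f(y)$ descends to the single residue $\res_{y=0}f(y)^n\frac{dy}{y}$ --- is genuinely different from the paper's argument, and its core is correct; formally it is just the compatibility of residues with the trace map for the degree-$m$ extension $\BC((y))/\BC((u))$, so the "multivaluedness" you worry about is not the real obstacle. The paper's proof is exactly your alternative route: it sets $f_1=f^{1/m}$, applies Proposition~\ref{prop:inverse1} to the inverse $g$ of $1/f_1$, writes $g_i(u)=g(\zeta^iu^{1/m})$, and collapses the resulting exponential with the root-of-unity sums $\sum_i\zeta^{in}$.

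The genuine gap in your write-up is the multiplicative constant, which you correctly isolate but never evaluate: the leading coefficient of $\prod_ig_i(u)/u$ is $\prod_{i=0}^{m-1}\zeta^i=\zeta^{m(m-1)/2}=(-1)^{m-1}$, which is $-1$ when $m$ is even, not $1$. Your argument, completed honestly, therefore proves
\[
\frac{\prod_{i=1}^m g_i(u)}{u} = (-1)^{m-1}\exp\left(\sum_{n=1}^\infty \frac{u^n}{n}\,(f(y)^n)_{y^0}\right),
\]
and the sign cannot be argued away, because the proposition as printed is false for even $m$: take $f(y)=y^{-2}$, so the branches of the inverse of $1/f=y^2$ are $\pm u^{1/2}$, the left-hand side equals $-1$, while $(f^n)_{y^0}=0$ for all $n\geq 1$ makes the right-hand side equal to $1$. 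So the step you deferred as "routine" is precisely where the statement breaks. Note that the paper's own proof shares this defect: when it multiplies the $m$ instances of Proposition~\ref{prop:inverse1} it silently drops the factor $\prod_i\zeta^i$ coming from the prefactors $\zeta^iu^{1/m}$, and the sign propagates to Proposition~\ref{prop:B3prod} (there $m=r-1$, so it matters for odd $r$). A complete version of your proof should state and prove the corrected identity with the factor $(-1)^{m-1}$.
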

\begin{proof}
	Let $f_1(y)=f(y)^{1/m}$. Let $g$ be the inverse series of $\frac{1}{f_1(y)}$. Then the branches $g_i$ are given by $g_i(u) = g(\zeta^i u^{1/m})$, where $\zeta$ is a primitive $m$-th root of unity. By Proposition \ref{prop:inverse1} we have
	\[
	\frac{\prod_{i=1}^m g_i(u)}{u} = \exp\left(\sum_{i=1}^m \sum_{n=1}^\infty \frac{\zeta^{in} u^{n/m}}{n} (f_1(y)^n)_{y^0}\right).
	\]
	From
	\[
	\sum_{i=1}^m \zeta^{in} = \begin{cases}
		m & (n=m n_1,\, n_1\in\BZ),\\
		0 & \text{otherwise}
	\end{cases}
	\]
	we obtain the statement.
\end{proof}

\begin{proof}[Proof of Proposition  \ref{prop:B3prod}]
Apply Proposition \ref{prop:inverse2} to $f(x):=\left(\frac{x^{\frac{1}{2}}-x^{-\frac{1}{2}}}{x^{\frac{r}{2}}-x^{-\frac{r}{2}}}\right)^2$.
Note that $f(x)^n\big|_{x^0}= f(x^2)^n\big|_{x^0}$. Thus we get
\[
\frac{y}{\prod_{i=1}^{r-1} \alpha_i(y)}=\exp\left(-\sum_{n=1}^\infty \frac{y^n}{n} \left.\left(\frac{x-x^{-1}}{x^{r}-x^{-r}}\right)^{2n}\right|_{x^0}\right)=(1-y)^r B_3(-y(1-y)^{r^2-1})^2
\]
For $G_3(w,z)$ apply Corollary \ref{VerSegCor}.
\end{proof}

We can rewrite the conjectural formula for $B_4(t)$ directly in terms of binomial coefficients.
The argument involves again Proposition \ref{prop:inverse1}. We leave the details to the reader as an exercise.
We introduce the following expressions, where the sums are over nonnegative integers.
%\begin{align*}
%\alpha_n&=\sum_{i=0}^n(-1)^i\binom{r(n-i)+n-1}{2n-1}\binom{2n}{i}\\
%\beta_n&=\sum_{k=1}^{n-1}\frac{1}{k(n-k)}\sum_{i=0}^{k-1}\left(
%\sum_{j=0}^{n-k-1}(-1)^{i+j}\binom{2k}{i}\binom{2(n-k)}{j}
%\sum_{e=1}^{2(n-k)}e\binom{(r+1)(n-k)-jr}{2(n-k)-e}\binom{(r+1)k-ir}{2k+e}\right.\\&+\left.\sum_{j=n-k}^{\min(2(n-k),n-i-1)}
 %  (-1)^{i+j}\binom{2k}{i}\binom{2(n-k)}{j}(jk - (n-k)i)r\binom{(r+1)n-r(i+j)-1}{2n-1}\right)\\
 %\gamma_n&=\sum_{k=1}^{n-1} \frac{1}{k(n-k)}\sum_{a=0}^{min(k,n-k)}a\sum_{i=0}^{k-a}(-1)^i\binom{r(k-a-i)+k-1}{2k-1}\binom{2k}{i}
 %\\&\cdot\sum_{i=0}^{a+n-k}(-1)^i\binom{r(n-k+a-i)+(n-k)-1}{2(n-k)-1}\binom{2(n-k)}{i}\\
 %\delta_n&=\sum_{i=1}^{\lfloor\frac{n}{2}\rfloor}\sum_{j=0}^{n+2i}(-1)^j\binom{r(n+2i-j)+n-1}{2n-1}\binom{2n}{j}
%\end{align*}
\begingroup\allowdisplaybreaks
\begin{align*}
\alpha_n&=%\sum_{k+l=n}(-1)^k\binom{rl+n-1}{2n-1}\binom{2n}{k}
\sum_{i=0}^{\lfloor\frac{n}{2}\rfloor}\sum_{j+k=n+2i}(-1)^j\binom{kr+n-1}{2n-1}\binom{2n}{j},\\
\beta_n&=\sum_{\stackrel{k+l=n}{k,l>0}}\frac{1}{kl}\sum_{i=0}^{k-1}\left(
\sum_{j=0}^{l-1}(-1)^{i+j}\binom{2k}{i}\binom{2l}{j}
\sum_{e=1}^{2l}e\cdot\binom{(r+1)l-jr}{2l-e}\binom{(r+1)k-ir}{2k+e}\right.\\&+\left.\sum_{j=l}^{\min(2l,n-i-1)}
   (-1)^{i+j}\binom{2k}{i}\binom{2l}{j}\frac{(jk - li)r}{n}\binom{(r+1)n-r(i+j)-1}{2n-1}\right),\\
 \gamma_n&=\sum_{\stackrel{k+l=n}{k,l>0}} \frac{1}{kl}\sum_{a=0}^{min(k,l)}a\Bigg(\sum_{i+j=k-a}(-1)^i\binom{rj+k-1}{2k-1}\binom{2k}{i}\Bigg)
 \\&\cdot\Bigg(\sum_{i'+j'=a+l}(-1)^{i'}\binom{rj'+l-1}{2l-1}\binom{2l}{i'}\Bigg).
%\delta_n&=\sum_{i=1}^{\lfloor\frac{n}{2}\rfloor}\sum_{j+k=n+2i}(-1)^j\binom{rk+n-1}{2n-1}\binom{2n}{j}
\end{align*}
\endgroup

%\[
%C_{k,m,a,b}=\sum_{e=0}^m e\binom{b+m}{m-e}\binom{a+k}{k+e},\quad
%\overline C_{k,m,a,b}=\frac{am-bk}{k+m}\binom{a+b+m+k-1}{k+m-1}
%\]
%\begin{align*}
%\beta_r&=\sum_{k=1}^{n-1}\frac{1}{k(n-k)}\sum_{i=0}^{k-1}\left(
%\sum_{j=0}^{n-k-1}(-1)^{i+j}\binom{2k}{i}\binom{2(n-k)}{j}C_{2k,2(n-k),(r-1)k-ir,(r-1)(n-k)-jr}\right.\\
%&+\left.\sum_{j=n-k}^{\min(2(n-k),n-i-1)} (-1)^{i+j}\binom{2k}{i}\binom{2(n-k)}{j}\overline C_{2k,2(n-k),(r-1)k-ir,(r-1)(n-k)-jr}\right)
%\end{align*}

%\begin{prop}
%With the above notations the formulas for $a_r(w)$, $b_r(w)$ can be rewritten as
%\begin{align*}
%a_r(-y(1-y)^{r^2-1})&=\exp\left(\sum_{n\ge 0}\frac{y^n}{2n}(r-\alpha_r)\right),\\
%b_r(-y(1-y)^{r^2-1})&=\exp\left(\sum_{n=1}^\infty\frac{y^n}{8n}(4r\alpha_n-r^2-3r^{2n}-2\beta_n-r^2\gamma_n+2r\delta_n)\right).
%\end{align*}
%\end{prop}

\begin{prop} \label{prop:Bserieslog}
With the above notation, Conjecture \ref{Bconj} is equivalent to the following formula for $B_4$.\begin{align*}
%a_r(-y(1-y)^{r^2-1})&=(1-y)^{-\frac{r}{2}}\exp\left(-\sum_{n\ge 0}\frac{y^n}{2n}\alpha_r\right),\\
B_4(-y(1-y)^{r^2-1})&=%(1-y)^{\frac{r^2}{8}}(1-r^2x)^{\frac{3}{8}}\exp\left(\sum_{n=1}^\infty y^n\Big(\frac{r}{2n}\alpha_n-\frac{\beta_n}{4}-\frac{r^2}4\gamma_n\Big)\right).
\exp\left(\sum_{n=1}^\infty \frac{y^n}{8n}\Big(4r\alpha_n-r^2-3r^{2n}-2n\beta_n-2nr^2\gamma_n\Big)\right).
\end{align*}
\end{prop}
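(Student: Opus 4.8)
The plan is to reduce Conjecture~\ref{Bconj} to a coefficientwise identity for the power series $\log B_4\bigl(-y(1-y)^{r^2-1}\bigr)$ in $y$, and then to evaluate every piece by Lagrange inversion in the style of Proposition~\ref{prop:inverse1}. Write $P_1=\prod_{i,j=1}^{r-1}(1-\alpha_i\alpha_j)$ and $P_2=\prod_{i\neq j}(1-\alpha_i^r\alpha_j^r)$. Taking logarithms in Conjecture~\ref{Bconj}, using $G_4=B_4$ together with $2\log G_3=\log\frac{y}{(1-y)^r\prod_i\alpha_i}$ (from Proposition~\ref{prop:B3prod} and Corollary~\ref{VerSegCor}) to eliminate $G_3$, the spurious $\log y$ cancels and one is left with
\[
8\log B_4=3\log(1-r^2y)+r^2\log(1-y)+4r\log\frac{\prod_{i=1}^{r-1}\alpha_i(y)}{y}+2\log P_1+2\log P_2.
\]
Since both the product formula and the proposed binomial formula determine $B_4$ once the remaining universal series are fixed, it suffices to verify this identity coefficient by coefficient; as every manipulation below is reversible, the claimed equivalence follows in both directions. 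The first two terms contribute $-3r^{2n}$ and $-r^2$ to the coefficient of $\frac{y^n}{n}$, and the third contributes $4r\alpha_n$ once one identifies $\alpha_n=\left(\frac{x^r-x^{-r}}{x-x^{-1}}\right)^{2n}\big|_{x^0}=\left[x^{(r-1)n}\right]\left(\frac{1-x^r}{1-x}\right)^{2n}$ (the constant term governing $B_3$ in Proposition~\ref{prop:B3prod}) and expands it by the binomial theorem into the stated double sum.

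The heart of the argument is the evaluation of $\log P_1$ and $\log P_2$. Expanding $\log(1-\alpha_i\alpha_j)=-\sum_{m\ge1}\frac1m(\alpha_i\alpha_j)^m$ and summing over pairs of branches reduces both to the power sums $p_m(y)=\sum_{i=1}^{r-1}\alpha_i(y)^m$, namely
\[
\log P_1=-\sum_{m\ge1}\frac{p_m^2}{m},\qquad \log P_2=-\sum_{m\ge1}\frac1m\left(p_{rm}^2-p_{2rm}\right),
\]
the term $p_{2rm}$ accounting for removal of the diagonal $i=j$. Each power sum is then computed by Lagrange inversion: since the $\alpha_i$ are the branches of the inverse of $f(x)=x^{r-1}\left(\frac{1-x}{1-x^r}\right)^2$, summing $g(\zeta^i y^{1/(r-1)})^m$ over the $(r-1)$st roots of unity kills all but the integral powers of $y$ and yields
\[
p_m(y)=m\sum_{\ell\ge1}\frac{y^\ell}{\ell}\left(x^m f(x)^{-\ell}\right)\Big|_{x^0}=m\sum_{\ell\ge1}\frac{y^\ell}{\ell}\,A_{m,\ell},\qquad A_{m,\ell}:=\left[x^{(r-1)\ell-m}\right]\left(\frac{1-x^r}{1-x}\right)^{2\ell}.
\]
Expanding $(1-x^r)^{2\ell}(1-x)^{-2\ell}$ turns each $A_{m,\ell}$ into an alternating sum of products $(-1)^i\binom{2\ell}{i}\binom{(r+1)\ell-ri-m-1}{2\ell-1}$, and the palindromy of $\left(\frac{1-x^r}{1-x}\right)^{2\ell}$, which gives $A_{m,\ell}=A_{-m,\ell}$, is exactly what lets the two factors of $p_{rm}^2$ be written with the asymmetric offsets $i+j=k-a$ and $i'+j'=a+l$ occurring in $\gamma_n$.

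Feeding these expansions back, the square $p_m^2$ produces a convolution $\sum_{k+l=n}\frac1{kl}(\cdots)$, so that $\beta_n=\left[y^n\right]\sum_m\frac{p_m^2}{m}$ and $r^2\gamma_n=\left[y^n\right]\sum_m\frac1m(p_{rm}^2-p_{2rm})$ acquire precisely the outer shape $\sum_{k+l=n}\frac1{kl}$ of their definitions, with the weight $\sum_m m$ migrating inward to the weights $\sum_{e}e$ and $\sum_a a$. \textbf{The main obstacle} is exactly this last bookkeeping. One must (i) convert the $m$-weighted correlation $\sum_m m\,A_{m,k}A_{m,l}$ into a single convolution in an auxiliary index $e$, which forces the split of the inner summation into the range $j<l$ (producing the double-binomial term $\sum_e e\binom{(r+1)l-jr}{2l-e}\binom{(r+1)k-ir}{2k+e}$) and the range $j\ge l$ (producing the single-binomial term weighted by $\frac{(jk-li)r}{n}$), and (ii) reconcile the diagonal correction $-p_{2rm}$ in $\log P_2$ with the squared term so that everything collapses to the clean weighted double sum defining $\gamma_n$. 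Tracking the exact truncation ranges through these steps (the conditions $0<i<k$, the cutoff $\min(2l,n-i-1)$, and $a\le\min(k,l)$), and verifying the telescoping and Vandermonde-type identities that perform the conversions, is where the labor of the exercise lies; once carried out, the coefficients match $\alpha_n$, $\beta_n$, and $\gamma_n$ verbatim, establishing the stated equivalence.
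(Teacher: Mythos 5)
Your overall framework is the intended one (and matches the paper's hint): take logarithms in Conjecture \ref{Bconj}, eliminate $G_3$ via Proposition \ref{prop:B3prod}, expand $\log P_1$ and $\log P_2$ through the power sums $p_m=\sum_i\alpha_i^m$, and compute those power sums by Lagrange inversion, getting $p_m(y)=m\sum_{\ell}\frac{y^\ell}{\ell}A_{m,\ell}$ with $A_{m,\ell}=[x^{(r-1)\ell-m}]\left(\frac{1-x^r}{1-x}\right)^{2\ell}$. But two of your three term-by-term identifications with the paper's defined quantities are false, and they are exactly the ones your "verbatim matching" hinges on. The paper's $\alpha_n$ is \emph{not} the constant term $\left(\frac{x^r-x^{-r}}{x-x^{-1}}\right)^{2n}\big|_{x^0}=A_{0,n}$: unwinding the double sum in its definition gives
\begin{equation*}
\alpha_n\;=\;\sum_{i\geq 0}\left[x^{n(r-1)+2ir}\right]\left(\frac{1-x^r}{1-x}\right)^{2n}\;=\;A_{0,n}+\sum_{i\geq 1}A_{2ir,n},
\end{equation*}
i.e.\ the sum of the coefficients at $x^0,x^{\pm 2r},x^{\pm 4r},\dots$, not just the middle one. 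A concrete counterexample: for $r=2$, $n=4$ the paper's $\alpha_4=[x^4](1+x)^8+[x^8](1+x)^8=71$, while the constant term is $\binom{8}{4}=70$. Correspondingly, the paper's $\gamma_n$ satisfies $r^2\gamma_n=[y^n]\sum_m \frac{p_{rm}^2}{m}$ with \emph{no} diagonal subtraction (its $a\leq\min(k,l)$ cutoff is automatic because $A_{ra,k}=0$ once $ra>(r-1)k$), whereas you assign the diagonal correction $-\sum_m\frac{p_{2rm}}{m}$ to $\gamma_n$. The correct bookkeeping is that this diagonal term from $2\log P_2$, namely $+2\sum_m\frac{p_{2rm}}{m}=4r\sum_n\frac{y^n}{n}\sum_{m\geq1}A_{2rm,n}$, must be merged with $4r\log\frac{\prod_i\alpha_i}{y}=4r\sum_n\frac{y^n}{n}A_{0,n}$, and it is precisely this merge that produces the paper's $\alpha_n$. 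Your two errors cancel in the aggregate (which is why your final identity for $8\log B_4$ is still correct), but a proof of the proposition \emph{as stated} must match the paper's $\alpha_n,\beta_n,\gamma_n$; carried out literally, your verification step would fail at $\alpha_n$ and $\gamma_n$.

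Beyond this, the part you yourself flag as "the main obstacle" — converting $\sum_m m\,A_{m,k}A_{m,l}$ into the paper's two-range expression for $\beta_n$ (the $e$-convolution for one range, the $\frac{(jk-li)r}{n}$-weighted single binomial for the other, with the cutoffs $i\leq k-1$ and $j\leq\min(2l,n-i-1)$) — is described but never executed, so even after repairing the regrouping the proof is a plan rather than a verification. (For what it is worth, your identification of $\beta_n$ with $[y^n]\sum_m\frac{p_m^2}{m}$ is the correct one; I checked $r=2$, $n=2$ gives $\beta_2=1$ on both sides.) The paper itself leaves these details as an exercise, so the substantive defect to repair in your write-up is the misassignment of the diagonal term between $\alpha_n$ and $\gamma_n$; the combinatorial labor for $\beta_n$ then still has to be done.
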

Note that by Corollary \ref{VerSegCor} with the changes of variables \eqref{varchange} and $u=\frac{-(1-v^{-1}}{1-(1-v^{-1})y}$, we have $G_4(w,z)=B_4(-y(1-y)^{r^2-1})$.

The formula of Proposition \ref{prop:Bserieslog} for $B_4$ is not very attractive, but it has the advantage that it can be
easily evaluated by computer to high order in $w$ with $r$ as variable.  In a previous attempt to understand the power series $B_3$, $B_4$, the first named author computed with Don Zagier the power series $B_4(t)$ modulo $t^{50}$ via a Pari/GP program. This program essentially computes the lowest order terms in $a_1$, $a_2$ of the specialization
$\Omega^V(w;0,\ldots,0;t_1,t_2)$ of the power series of Proposition \ref{prop:specializations} modulo $w^{50}$ with $r=k-1$ as variable.  Using Proposition \ref{prop:Bserieslog}, a direct Pari/GP computation then  shows Proposition \ref{prop:Bconj}.

\bibliographystyle{amsalpha}
\bibliography{refs}

\end{document}